\definecolor{linkblue}{HTML}{003d73}
\definecolor{linkgreen}{HTML}{006161}
\definecolor{linkred}{HTML}{a11950}
\crefname{thm}{theorem}{theorems}
\crefname{cor}{corollary}{corollaries}
\newtheorem{thm}{Theorem}[section]
\newtheorem*{thm*}{Theorem}
\newtheorem{prop}[thm]{Proposition}
\newtheorem{lem}[thm]{Lemma}
\newtheorem{cor}[thm]{Corollary}
\theoremstyle{definition}
\newtheorem{remark}[thm]{Remark}
\newcommand{\R}{\mathbb{R}}
\newcommand{\C}{\mathbb{C}}
\newcommand{\Cd}{\mathbb{C}^{d \times d}}
\newcommand{\normal}{\mathcal{N}_d}
\newcommand{\unitnormal}{\mathcal{U}\normal}
\newcommand{\energy}{\operatorname{E}}
\newcommand{\renergy}{\overline{\energy}}
\newcommand{\benergy}{\operatorname{B}}
\newcommand{\rbenergy}{\overline{\benergy}}
\newcommand{\flow}{\mathcal{F}}
\newcommand{\rflow}{\overline{\flow}}
\newcommand{\bflow}{\mathscr{F}}
\newcommand{\rbflow}{\overline{\bflow}}
\newcommand{\grad}{\operatorname{grad}}
\newcommand{\tr}{\operatorname{tr}}
\newcommand{\diag}{\operatorname{diag}}
\newcommand{\graphG}{\mathcal{G}}
\newcommand{\unitary}{\operatorname{U}}
\newcommand{\SL}{\operatorname{SL}}
\newcommand{\DSL}{\operatorname{DSL}}
\newcommand{\SU}{\operatorname{SU}}
\newcommand{\DSU}{\operatorname{DSU}}
\newcommand{\subalign}[1]{%
  \vcenter{%
    \Let@ \restore@math@cr \default@tag
    \baselineskip\fontdimen10 \scriptfont\tw@
    \advance\baselineskip\fontdimen12 \scriptfont\tw@
    \lineskip\thr@@\fontdimen8 \scriptfont\thr@@
    \lineskiplimit\lineskip
    \ialign{\hfil$\m@th\scriptstyle##$&$\m@th\scriptstyle{}##$\hfil\Krcr
      #1\Krcr
    }%
  }%
}
\tikzset{vertex/.style = {shape=circle, fill=black, inner sep = 1.5pt, outer sep = 0pt}}
\tikzset{edge/.style = {->,> = latex'}}
\title{Geometric Approaches to Matrix Normalization and Graph Balancing}
\author[$\ast$]{Tom Needham}
\author[$\dag$]{Clayton Shonkwiler}
\affil[$\ast$]{Department of Mathematics, Florida State University, Tallahassee, FL} 
\affil[$\dag$]{Department of Mathematics, Colorado State University, Fort Collins, CO}
\date{}
\begin{document}

\maketitle

\begin{abstract}
Normal matrices, or matrices which commute with their adjoints, are of fundamental importance in pure and applied mathematics. In this paper, we study a natural functional on the space of square complex matrices whose global minimizers are normal matrices. We show that this functional, which we refer to as the non-normal energy, has incredibly well-behaved gradient descent dynamics: despite it being non-convex, we show that the only critical points of the non-normal energy are the normal matrices, and that its gradient descent trajectories fix matrix spectra and preserve the subset of real matrices. We also show that, even when restricted to the subset of unit Frobenius norm matrices, the gradient flow of the non-normal energy retains many of these useful properties. This is applied to prove that low-dimensional homotopy groups of spaces of unit norm normal matrices vanish; for example, we show that the space of $d \times d$ complex unit norm normal matrices is simply connected for all $d \geq 2$. Finally, we consider the related problem of balancing a weighted directed graph---that is, readjusting its edge weights so that the weighted in-degree and out-degree is the same at each node. We adapt the non-normal energy to define another natural functional whose global minima are balanced graphs and show that gradient descent of this functional always converges to a balanced graph, while preserving graph spectra and realness of the weights. Our results were inspired by concepts from symplectic geometry and Geometric Invariant Theory, but we mostly avoid invoking this machinery and our proofs are generally self-contained.
\end{abstract}

\section{Introduction}

 A matrix $A$ is called \emph{normal} if it commutes with its conjugate transpose: $AA^\ast = A^\ast A$. The set of $d \times d$ complex normal matrices, which we denote as $\normal \subset \C^{d \times d}$, is a fundamental object in linear algebra; for example, the Spectral Theorem characterizes $\normal$ as the set of unitarily diagonalizable matrices:
\[
\normal = \{UDU^\ast \mid D \in \mathcal{D}_d, \, U \in \unitary(d)\},
\]
where $\mathcal{D}_d \subset \C^{d \times d}$ is the set of diagonal  matrices and $\unitary(d)$ is the group of unitary matrices. Moreover, normal matrices are especially well-behaved from a numerical analysis perspective. Indeed, the Bauer--Fike Theorem~\cite{bauer1960norms} implies that the eigenvalues of a normal matrix are Lipschitz stable under perturbations, which motivates the approximation of transfer matrices by normal matrices in classical control theory~\cite{daniel1983choice,daniel1984analysis}. In the literature on dynamics on complex networks, it has also been observed that directed networks whose weighted adjacency matrices are not normal exhibit distinctive dynamical features which can confound classical spectral methods~\cite{asllani2018topological,muolo2019patterns,asllani2018structure}. Based on these considerations, the \emph{closest normal matrix problem}---that is, the problem of finding a closest point in $\normal$ to an arbitrary matrix in $\C^{d \times d}$---has been thoroughly studied~\cite{ruhe1987closest,gabriel1987normal,noschese2009structured,guglielmi2019computing}. 

This paper studies $\normal$ from a geometric perspective, with a view toward optimization tasks such as the closest normal matrix problem. Our results are largely derived from the simple observation that $\normal$ is the set of global minima of the function
\begin{equation}\label{eqn:non-normal_energy_intro}
\energy:\C^{d \times d} \to \R, \quad \mbox{defined by} \quad \energy(A) =  \|AA^\ast - A^\ast A\|^2,
\end{equation}
where $\|\cdot\|$ is the Frobenius norm; that is,
\[
\|B\|^2 = \sum_{i,j = 1}^d b_{ij}^2, \quad \mbox{for } \quad B = \big(b_{ij}\big)_{i,j = 1}^d \in \C^{d \times d}.
\]
Despite the fact that the function $\energy$, which we refer to as the \emph{non-normal energy}, is not quasiconvex (see \Cref{rem:convexity}), it is surprisingly well-behaved from an optimization perspective: we prove in  \Cref{thm:critical points} that the only critical points of $\energy$ are normal matrices, hence gradient descent along $\energy$ gives an approximate solution to the closest normal matrix problem. We derive several related results, which are described in more detail below in \Cref{sec:main_contributions}; in short, we show that gradient descent preserves interesting features of the initializing matrix, such as its spectrum or the realness of its entries. We also consider the restriction of non-normal energy to the space of matrices with unit Frobenius norm and show that its gradient flow is also quite well-behaved. This has immediate topological implications, as we explain in more detail in \Cref{sec:main_contributions}.

The properties of the non-normal energy which we exploit in this paper are predictable from a high-level perspective: $\energy$ is the squared norm of a momentum map associated to a Hamiltonian action of $\mathrm{SU}(d)$ on $\C^{d \times d}$ (see \Cref{prop:momentum_map}). This terminology comes from the field of symplectic geometry, where the behavior of functions of this form is well-understood~\cite{kirwan_cohomology_1984,lerman_gradient_2005}. Our work in this paper is heavily inspired by Mumford's \emph{Geometric Invariant Theory (GIT)}~\cite{mumford_geometric_1994} (see~\cite{thomas_notes_2005} for a nice introduction or~\cite{mixon2024three} for applications to similar matrix optimization problems) and Kirwan's work relating GIT and symplectic geometry~\cite{kirwan_cohomology_1984}; there are also strong connections to Ness's paper~\cite{ness_stratification_1984}. One of our goals in writing this paper was to make our arguments---especially the fundamentally elementary ones---as accessible as possible, so we have mostly avoided explicitly invoking GIT in what follows, but it was very much on our minds as we were working on this paper. Connections to GIT and symplectic ideas are explained throughout.

As our results on $\normal$ are rooted in powerful general theory, it should not be surprising that our techniques are more broadly applicable. Indeed, we also apply our geometric approach  to the \emph{graph balancing problem}: given a weighted, directed graph $\mathcal{G}$, one wishes to determine a new set of edge weights which balances the graph in the sense that the weighted in-degree is the same as the weighted out-degree at each node. If the latter condition is met, we say that the graph is \emph{balanced}. An example of our gradient flow-based approached to graph balancing, as is described below, is shown in \Cref{fig:balancing}. This problem is natural from an applications perspective; for example, in the case that the underlying graph represents a road network and that the weights are roadway capacities, that the graph is balanced corresponds to the feasibility of traffic flow through all intersections. As such, the graph balancing problem is well-studied in the operations research literature~\cite{rikos2014distributed,hooi1970class,hadjicostis2012distributed}.

Representing a graph $\mathcal{G}$ on $d$ nodes by a matrix $A \in \C^{d \times d}$ containing the square roots of the entries of the weighted adjacency matrix of $\mathcal{G}$, the balanced graphs are exactly the global minima of the \emph{unbalanced energy} function, 
\begin{equation}\label{eqn:unbalanced_energy_intro}
    \benergy:\C^{d \times d} \to \R, \quad \mbox{defined by} \quad
    A \mapsto \|\diag(AA^\ast - A^\ast A)\|^2,
\end{equation}
where $\diag$ is the linear map which zeros out all off-diagonal entries. The unbalanced energy is similar in structure to the non-normal energy---in fact, it is also the squared norm of a momentum map---and we derive similar results regarding its gradient flow. We show in \Cref{thm:torus critical points} that the critical points of $\benergy$ are exactly the balanced matrices and refine this result to show that gradient flow preserves geometric features of the underlying graph.  We describe these results more precisely in the following subsection.

\subsection{Main Contributions and Outline}\label{sec:main_contributions}

We now summarize our main results in more detail.
\begin{itemize}
\item  {\bf Gradient flow of non-normal energy:} \Cref{sec:normal_matrices} considers properties of the non-normal energy \eqref{eqn:non-normal_energy_intro}, with a focus on properties of its gradient descent dynamics in relation to normal matrices. Although the non-normal energy is not convex (\Cref{rem:convexity}), we show in \Cref{thm:critical points} that the only critical points of $\energy$ are normal matrices; i.e., its global minima. It follows easily that its gradient descent has a well-defined limiting normal matrix for every choice of initial conditions; we additionally show in \Cref{thm:unconstrained gradient flow} that the gradient descent trajectories of the non-normal entries preserve spectra and realness of matrix entries. We derive estimates of the distance traveled under gradient flow, which give new interpretations of concepts in the literature on the closest normal matrix problem (\Cref{cor:henrici} and \Cref{prop:distance bound}).

\item {\bf Restriction to unit norm matrices and topological consequenes:} In \Cref{sec:unit-norm}, we consider the restriction of the non-normal energy to the space of matrices with unit Frobenius norm. We prove in \Cref{thm:constrained gradient flow} that if gradient descent is initialized at a non-nilpotent unit norm matrix, then it converges to a normal matrix, and that if the initialization has real entries then so does its limit. As an application, we show that the low-dimension homotopy groups of the spaces of complex and real unit norm normal matrices vanish in \Cref{thm:topology_unit_normal} and \Cref{thm:topology_unit_normal_real}, respectively. In particular, the space of $d \times d$ unit norm complex normal matrices is connected for all $d$ and simply connected for $d\geq 2$, whereas the space of unit norm real normal matrices is connected for $d \geq 2$ and simply connected for $d \geq 3$. 

\item {\bf Graph balancing via unbalanced energy:} The unbalanced energy \eqref{eqn:unbalanced_energy_intro} and its applications to graph balancing are studied in \Cref{sec:graphs}. \Cref{thm:torus critical points} shows that the only critical points of the unbalanced energy are its global minima; that is, matrices representing balanced digraphs. Gradient descent converges to a balanced digraph representation, and we show in \Cref{thm:benergy flow} that it preserves spectra and realness of entries. Moreover, this theorem shows that if the entries of a real matrix are positive then this property is also preserved, and that if an entry in the initial matrix is zero then it stays zero along the gradient descent path---in terms of graphs, gradient descent does not create any edges that were not present at initialization. We also consider the restriction of the unbalanced energy to unit norm matrices (which represent digraphs with a fixed total edge capacity) and derive similar useful properties of its gradient flow in \Cref{thm:torus constrained gradient flow}. Finally, we observe in \Cref{thm:topology_balanced_graphs} that the spaces of complex and real balanced unit norm matrices are homotopy equivalent to spaces of real and complex normal matrices, respectively. 
\end{itemize}

\begin{figure}
    \centering
    \includegraphics[width=6in]{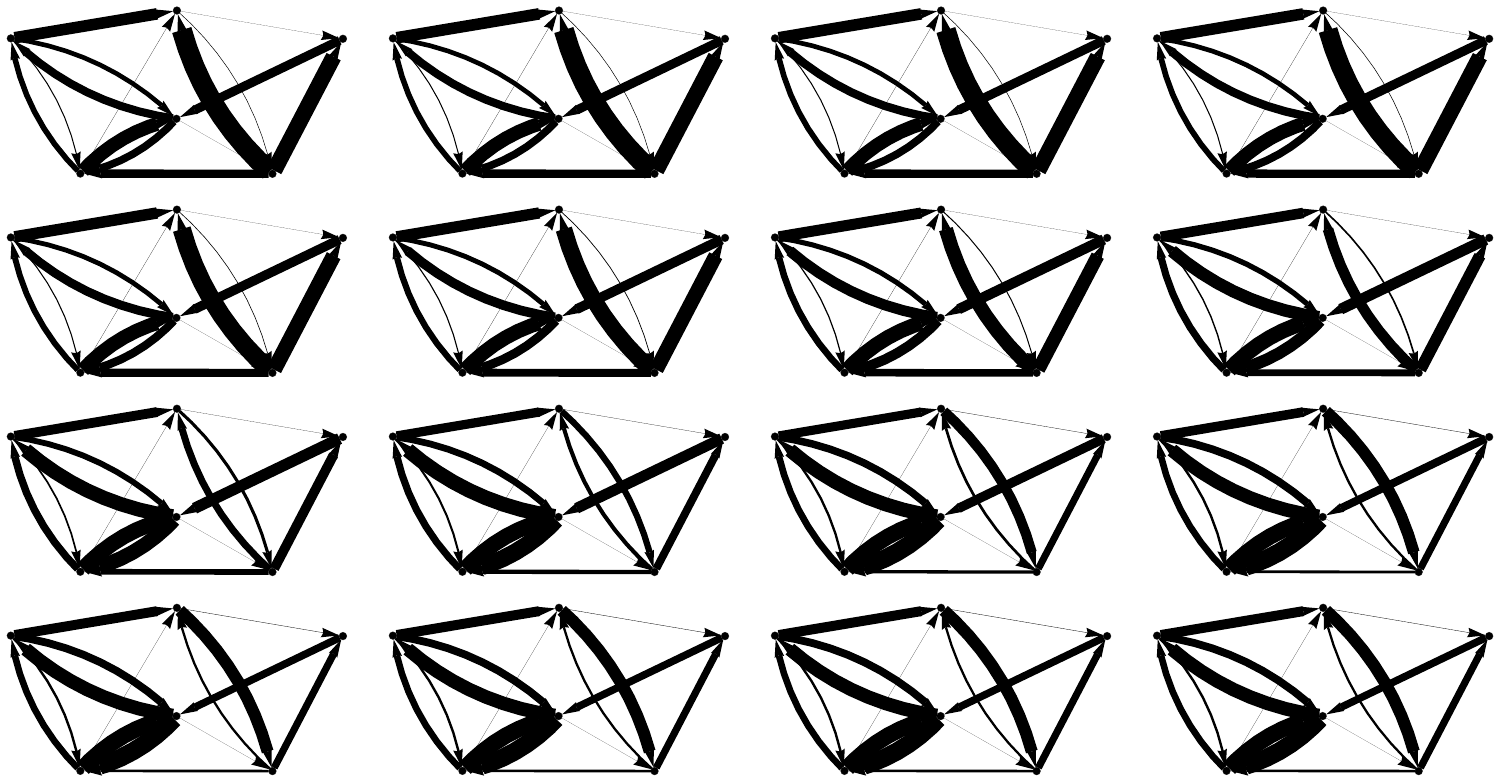}
    \caption{Balancing a graph, starting at top left with a random weighted, directed multigraph with 6 vertices and 15 edges and ending with a balanced graph with the same edges and vertices on the bottom right. The thickness of each edge is proportional to its weight and the time parameter is logarithmic in the number of iterations of gradient descent. Two features of interest: different edges have activity in different timeframes (compare the two edges connecting the bottom-right vertex to the top-center vertex), and the weight of an edge can be non-monotone as a function of time (e.g., the left-most edge or the edge connecting the top-right vertex to the central vertex).}
    \label{fig:balancing}
\end{figure}

\section{Normal Matrices and Optimization}\label{sec:normal_matrices}

Recall from the introduction that the \emph{non-normal energy} $\energy:\Cd \to \R$ is the function
\[
	\energy(A) = \|AA^\ast - A^\ast A\|^2 =  \|[A,A^\ast]\|^2.
\]
Throughout this paper we use $[\cdot, \cdot]$ to denote the matrix commutator: $[A,B]=AB-BA$.

The goal of this section is to derive properties of the gradient descent dynamics of $\energy$. In particular, we will show that we can normalize any square matrix by sending it to its limit under the negative gradient flow of $\energy$.

\subsection{Background}

The map $\energy$ has a long history in the problem of finding the closest normal matrix to a given matrix, going back at least to Henrici~\cite{henrici_bounds_1962}, who proved the following:

\begin{prop}[Henrici~\cite{henrici_bounds_1962}]\label{prop:henrici bound}
	For any $A \in \Cd$,
	\[
		\inf_{M \in \normal} \|A - M\| \leq \left(\frac{d^3-d}{12}\energy(A)\right)^{1/4}.
	\]
\end{prop}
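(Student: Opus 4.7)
The plan is to apply the Schur decomposition to replace $A$ by a unitarily equivalent upper triangular matrix $T = D + N$, with $D$ diagonal (containing the eigenvalues of $A$) and $N$ strictly upper triangular, take $M := UDU^*$ as an explicit normal candidate, and then bound $\|N\|$ in terms of $\|[T, T^*]\|$ via a telescoping identity followed by a single Cauchy--Schwarz step.

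First, I would note that $M$ is unitarily similar to $D$ and hence normal, and that unitary invariance of the Frobenius norm and of the commutator $[\cdot, \cdot^*]$ gives
\[
\inf_{M' \in \normal}\|A - M'\| \leq \|A - M\| = \|N\|, \qquad \energy(A) = \|[T, T^*]\|^2.
\]
It therefore suffices to prove the upper-triangular inequality $\|N\|^4 \leq \tfrac{d^3-d}{12}\|[T,T^*]\|^2$ for every upper triangular $T = D + N$.

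Next, I would extract the key identity relating $N$ to the diagonal entries of $[T, T^*]$. Direct computation using upper-triangularity gives
\[
(TT^* - T^*T)_{ii} = \sum_{j>i}|N_{ij}|^2 - \sum_{j<i}|N_{ji}|^2,
\]
so the partial sum $\sum_{i=1}^k(TT^* - T^*T)_{ii}$ telescopes to the non-negative quantity $\sum_{i \leq k < j}|N_{ij}|^2$, the squared Frobenius norm of the upper-right $k \times (d-k)$ block of $N$. Summing these partial sums over $k = 1, \ldots, d-1$ and interchanging order of summation yields the weighted identity
\[
\sum_{i<j}(j-i)|N_{ij}|^2 = \sum_{i=1}^d(d - i)(TT^* - T^*T)_{ii}.
\]

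Finally, I would exploit the trace-zero constraint $\tr[T, T^*] = 0$: the weights $d - i$ on the right may be shifted by any constant $\alpha$ without changing the sum, and the Cauchy--Schwarz-optimal choice $\alpha = (d-1)/2$ recenters them to $\tfrac{d+1}{2} - i$, whose squared $\ell^2$-norm is exactly $\tfrac{d^3 - d}{12}$. Combined with the elementary estimate $\|N\|^2 \leq \sum_{i<j}(j-i)|N_{ij}|^2$ (valid because $j - i \geq 1$), Cauchy--Schwarz delivers
\[
\|N\|^2 \leq \sqrt{\tfrac{d^3-d}{12}}\,\|[T,T^*]\|,
\]
which after squaring is the required inequality. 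The main obstacle is identifying the correct weighting $(j - i)$ in the telescoping step; once that identity is in hand, the sharp constant $(d^3 - d)/12$ is essentially forced by the trace constraint through the optimal recentering in Cauchy--Schwarz.
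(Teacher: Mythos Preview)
Your argument is correct and is essentially Henrici's original proof. Note, however, that the paper does not supply its own proof of this proposition: it is stated with attribution to Henrici~\cite{henrici_bounds_1962} and used only as background context for the non-normal energy, so there is nothing in the paper to compare your approach against beyond the citation itself.
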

In other words, the distance from $A$ to $\normal$ is bounded above by a quantity proportional to $\energy(A)^{1/4}$. One virtue of this estimate is that $\energy(A)$ is relatively easy to compute.

We now give an interpretation of $\energy$ in terms of symplectic geometry, where we consider $\C^{d \times d} \approx \C^{d^2}$ as a symplectic manifold with its standard symplectic structure. This interpretation is not necessary for most of the paper, and is mainly included for context. As such, we give a somewhat informal treatment and avoid explicit definitions of any of the standard terminology from symplectic geometry. In our previous papers, we give short and elementary overviews of the necessary concepts from symplectic geometry, with a view toward understanding similar spaces of structured matrices (e.g., spaces consisting of unit norm tight frames); we refer the reader to~\cite[Section 2]{needham2021symplectic} and~\cite[Section 2.1]{needham2022toric} for more in-depth exposition.

Consider the action of the unitary group $\SU(d)$ on $\C^{d \times d}$ by conjugation. Let $\mathfrak{su}(d)$ denote the Lie algebra of $\SU(d)$---that is, the traceless, skew-Hermitian $d\times d$ matrices---and let $\mathfrak{su}(d)^\ast$ denote its dual. It will be convenient to identify $\mathfrak{su}(d)^\ast$ with the space $\mathscr{H}_0(d)$ of $d \times d$ traceless Hermitian matrices via the isomorphism
\begin{align*}
    \mathscr{H}_0(d) &\to \mathfrak{su}(d)^\ast \\
    Y &\mapsto \big(X \mapsto \frac{i}{2}\mathrm{Tr}(XY)\big).
\end{align*}
Then we have the following interpretation of $\energy$.

\begin{prop}\label{prop:momentum_map}
    The conjugation action of $\SU(d)$ on $\Cd$ is Hamiltonian, with momentum map 
    \begin{align}
        \mu: \Cd &\to \mathscr{H}_0(d) \approx \mathfrak{su}(d)^\ast \label{eqn:momentum_map} \\
        A &\mapsto [A,A^\ast].
    \end{align}
    The non-normal energy $\energy$ is therefore the squared norm of a momentum map. 
\end{prop}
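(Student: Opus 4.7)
The plan is to follow the standard recipe for verifying that a Lie group action is Hamiltonian and for computing its momentum map. First I would equip $\C^{d \times d} \cong \C^{d^2}$ with its standard K\"ahler structure: the symplectic form is $\omega(B, C) = \mathrm{Im}\,\mathrm{Tr}(BC^\ast)$, the imaginary part of the Frobenius Hermitian inner product. Since conjugation by a unitary matrix preserves this Hermitian inner product, the conjugation action of $\SU(d)$ is by symplectomorphisms, so there is no obstruction to the existence of a momentum map.

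Next I would compute the fundamental vector field generated by $X \in \mathfrak{su}(d)$: differentiating the one-parameter family $t \mapsto e^{tX}Ae^{-tX}$ at $t = 0$ yields $X^\#(A) = [X, A]$. Under the identification $\mathscr{H}_0(d) \cong \mathfrak{su}(d)^\ast$ given in the statement, the candidate $\mu(A) = [A, A^\ast]$ corresponds to the scalar function $\mu^X(A) = \frac{i}{2}\mathrm{Tr}(X[A, A^\ast])$, and the task reduces to verifying the defining identity $d\mu^X_A(V) = \omega(X^\#(A), V)$ for every tangent vector $V \in \C^{d \times d}$.

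The main calculation is the differentiation of $\mu^X$. Expanding the commutator and applying the Leibniz rule gives
\[
d\mu^X_A(V) = \tfrac{i}{2}\mathrm{Tr}\bigl(X[V, A^\ast]\bigr) + \tfrac{i}{2}\mathrm{Tr}\bigl(X[A, V^\ast]\bigr).
\]
Using cyclicity of trace together with $X^\ast = -X$, one checks that the two terms differ from one another only by complex conjugation and a sign, so the sum collapses to $2i\,\mathrm{Im}\,\mathrm{Tr}(X[A, V^\ast])$. A second application of cyclicity rewrites $\mathrm{Tr}(X[A, V^\ast]) = \mathrm{Tr}([X,A]V^\ast)$, whence $d\mu^X_A(V) = -\mathrm{Im}\,\mathrm{Tr}([X, A]V^\ast) = -\omega([X,A], V)$, which is the momentum map condition up to the sign convention one adopts.

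Two small bookkeeping checks complete the argument. First, $\mu(A) = [A, A^\ast]$ does land in $\mathscr{H}_0(d)$: it is Hermitian since $(AA^\ast - A^\ast A)^\ast = AA^\ast - A^\ast A$, and traceless by cyclicity. Second, $\mu$ is $\SU(d)$-equivariant with respect to the coadjoint action, since $\mu(UAU^\ast) = U[A, A^\ast]U^\ast$ is immediate from $(UAU^\ast)^\ast = UA^\ast U^\ast$. The final claim of the proposition is then tautological: $\energy(A) = \|[A, A^\ast]\|^2 = \|\mu(A)\|^2$ with respect to the Frobenius norm on $\mathscr{H}_0(d)$. The only obstacle is bookkeeping, namely keeping consistent sign and normalization conventions for $\omega$, for the pairing $\mathscr{H}_0(d) \cong \mathfrak{su}(d)^\ast$, and for the momentum map defining condition through the two cyclicity manipulations.
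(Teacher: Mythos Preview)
Your proposal is correct and is precisely the kind of ``straightforward calculation'' the paper has in mind: the paper actually \emph{omits} the proof of this proposition entirely, so there is no authors' argument to compare against. The one residual sign $d\mu^X_A(V) = -\omega(X^\#(A),V)$ that you flag is, as you say, purely conventional and can be absorbed either into the sign of $\omega$ (some authors take $\omega = -\mathrm{Im}\langle\cdot,\cdot\rangle$) or into the direction of the identification $\mathscr{H}_0(d)\cong\mathfrak{su}(d)^\ast$; since the paper never uses any finer information about $\mu$ than $\energy = \|\mu\|^2$ and the gradient formula \eqref{eq:gradient}, this convention is immaterial.
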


We omit the proof of \Cref{prop:momentum_map}, which is a straightforward calculation. In light of this result, one should expect the non-normal energy to have nice properties---see, e.g., work of  Kirwan~\cite{kirwan_cohomology_1984} and Lerman~\cite{lerman_gradient_2005}. However, the specific properties of $\energy$ (and related functions) that we derive below do not follow directly from the general theory.

\subsection{Critical Points of Non-Normal Energy}

Obviously, the global minima of the non-normal energy $\energy$ are exactly the normal matrices. In fact, we now show that these are the only critical points. Throughout the paper, we use $\langle \cdot, \cdot \rangle$ to denote the real part of the Frobenius inner product on $\Cd$,
\[
\langle A, B \rangle \coloneq \mathrm{Re} \tr(B^\ast A),
\]
and we use $D\mathrm{F}(A)$ to denote the derivative of a map $\mathrm{F}:\Cd \to \R$ at $A \in \Cd$. 

\begin{thm} \label{thm:critical points}
    The only critical points of $\energy$ are the global minima; that is, the normal matrices.
\end{thm}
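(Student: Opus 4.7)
The approach is direct: compute the gradient of $\energy$ at $A$, write down the critical-point equation, and then exhibit a trace identity that forces $[A,A^*]=0$ at any critical point.

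First I would compute the differential. Expanding $\energy(A+tB)=\|[A+tB,(A+tB)^*]\|^2$ and collecting the $t$-linear term, and writing $M:=[A,A^*]$ (which is Hermitian, since $M^*=M$) and $N:=[B,A^*]+[A,B^*]$, gives $D\energy(A)[B]=2\langle M,N\rangle$. Using cyclicity of trace to move $B$ and $B^*$ out of each summand of $N$, and using $M^*=M$, a short calculation shows that each of $\langle M,[B,A^*]\rangle$ and $\langle M,[A,B^*]\rangle$ equals $\langle [M,A],B\rangle$. Consequently
\[
\grad \energy(A) \;=\; 4\bigl[[A,A^*],\,A\bigr],
\]
so $A$ is a critical point if and only if $A$ commutes with $[A,A^*]$.

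The crux is then the trace identity
\[
\|M\|^2 \;=\; \tr(M^2) \;=\; \tr\!\bigl(A^*\,[M,A]\bigr), \qquad M=[A,A^*].
\]
To verify it, expand the right-hand side as $\tr(A^*MA)-\tr(A^*AM)$, then cycle each factor to obtain $\tr(MAA^*)-\tr(MA^*A)=\tr(M(AA^*-A^*A))=\tr(M^2)$; the equality $\|M\|^2=\tr(M^2)$ holds because $M$ is Hermitian. Combining this with the gradient computation, if $A$ is a critical point then $[M,A]=0$, whence $\|M\|^2=\tr(A^*\cdot 0)=0$, so $M=[A,A^*]=0$ and $A$ is normal.

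The only genuine obstacle is spotting the trace identity, but this is heavily telegraphed by the symplectic picture of \Cref{prop:momentum_map}: the squared-norm of a momentum map is always recoverable from a pairing of the momentum value with the infinitesimal action, so one should expect $\energy(A)$ and $\grad \energy(A)$ to be related by exactly such a pairing of the form $\tr(A^*[M,A])$. Everything else is routine bookkeeping with commutators and trace cyclicity.
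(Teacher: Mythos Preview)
Your proof is correct, and the gradient formula matches the paper's. The difference lies in how you pass from the critical-point equation $[A,[A,A^*]]=0$ to $[A,A^*]=0$. The paper invokes Jacobson's Lemma (\Cref{lem:jacobson}): since $A$ commutes with $[A,A^*]$, the latter is nilpotent, and a nilpotent Hermitian matrix is zero. Your route---the identity $\|M\|^2=\tr(A^*[M,A])$---is more elementary and self-contained; indeed, it is essentially the observation $\langle \nabla\energy(A),A\rangle = 4\energy(A)$, which the paper derives separately (equation~\eqref{eq:riemannian gradient}) for a different purpose. What the paper's approach buys is robustness: Jacobson's Lemma is reused verbatim in the unit-norm setting (\Cref{prop:critical points}) and the balanced setting (\Cref{prop:torus critical points}), where the constrained critical-point equation reads $[A,[A,A^*]]+\renergy(A)A=0$. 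Plugging that into your trace identity yields only the tautology $\renergy(A)=\renergy(A)$, so your argument does not transfer directly to those later results, whereas the Jacobson route handles all three cases uniformly.
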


\begin{proof}
    We claim that
\begin{equation}\label{eq:gradient}
    \nabla \energy(A) = -4[A,[A,A^\ast]].
\end{equation}
Indeed, since $\energy$ is the square of a momentum map (\Cref{prop:momentum_map}), this follows by general principles of symplectic geometry---see, e.g.,~\cite[Lemma 6.6]{kirwan_cohomology_1984} or~\cite[Lemma 6.1]{ness_stratification_1984}. Let us additionally give an elementary derivation of this fact. Writing $\energy = N \circ \mu$, where $\mu$ is the momentum map \eqref{eqn:momentum_map} and $N:\C^{d \times d} \to \R$ is the norm-squared map $N(A) = \|A\|^2$, we have, for any $A,B \in \C^{d \times d}$,
\[
\langle \nabla \energy(A), B\rangle = D\energy(A)(B) = DN(\mu(A)) \circ D\mu(A) (B) =  \langle \nabla N (\mu(A)), D\mu(A)(B)\rangle =  \langle D\mu(A)^\vee \nabla N (\mu(A)), B\rangle,
\]
where we use $D\mu(A)^\vee$ to denote the adjoint of $D\mu(A)$ with respect to the inner product $\langle \cdot, \cdot \rangle$. It follows that 
\[
\nabla \energy (A) = D\mu(A)^\vee \nabla N (\mu(A)).
\]
A straightforward calculation then shows that the adjoint is given by the formula
\begin{equation}\label{eqn:adjoint_formula}
    D\mu(A)^\vee(C) = [C + C^\ast, A].
\end{equation}
It is also easy to show that $\nabla N(C) = 2C$, so we conclude that 
\[
\nabla \energy (A) = [2\mu(A) + 2\mu(A)^\ast, A] = -4[A,[A,A^\ast]].
\]

Therefore, we have a critical point of $\energy$ exactly when
\[
    0 = [A,[A,A^\ast]];
\]
that is, when $A$ and $[A,A^\ast]$ commute. By Jacobson's Lemma (stated below as \Cref{lem:jacobson}), this implies that $[A,A^\ast]$ is nilpotent. But $[A,A^\ast]$ is Hermitian, so it is nilpotent if and only if it is the zero matrix, which happens precisely when $A$ is normal.\footnote{The equivalence of $A$ being normal and $A$ commuting with $[A,A^\ast]$ appears as \#73 in Elsner and Ikramov's list~\cite{elsner_normal_1998}; they attribute it to~\cite[4.28.5]{marcus_survey_1964}.}
\end{proof}

\begin{lem}[{Jacobson~\cite{jacobson_rational_1935}}; see also~\cite{kaplansky_jacobsons_1980}] \label{lem:jacobson}
    If $A$ and $B$ are $d \times d$ matrices over a field of characteristic 0 and $A$ commutes with $[A,B]$, then $[A,B]$ is nilpotent.
\end{lem}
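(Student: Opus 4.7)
The plan is to show that $C \coloneq [A,B]$ has vanishing trace powers $\tr(C^k) = 0$ for every $k \geq 1$, and then invoke the standard fact that over a field of characteristic zero this forces $C$ to be nilpotent.

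The key computational step is to rewrite each power of $C$ as a commutator. Using the hypothesis $AC = CA$, note that $A$ also commutes with $C^{k-1}$ for every $k \geq 1$. Expanding
\[
C^k = [A,B]\,C^{k-1} = AB\,C^{k-1} - BA\,C^{k-1} = AB\,C^{k-1} - B\,C^{k-1}\,A = [A,\, B\,C^{k-1}],
\]
we exhibit $C^k$ as a commutator. Since the trace of any commutator of $d \times d$ matrices vanishes, this yields $\tr(C^k) = 0$ for all $k \geq 1$.

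To conclude that $C$ is nilpotent, I would appeal to Newton's identities relating the power sums $p_k = \tr(C^k)$ of the eigenvalues of $C$ (with multiplicity) to the elementary symmetric functions $e_k$ in those eigenvalues, namely
\[
k\,e_k = \sum_{j=1}^{k} (-1)^{j-1} e_{k-j}\, p_j.
\]
In characteristic zero we may divide by $k$, and an induction on $k$ shows that $p_1 = \cdots = p_d = 0$ forces $e_1 = \cdots = e_d = 0$. Hence the characteristic polynomial of $C$ is $\lambda^d$, and by Cayley--Hamilton $C^d = 0$, so $C$ is nilpotent.

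The only subtlety, and hence the main thing to flag, is where characteristic zero enters: it is precisely at the division-by-$k$ step in Newton's identities. This is why the statement fails in positive characteristic (in characteristic $p$, the identity matrix in $\mathrm{M}_p$ can arise as $[A,B]$ in disguise, and one needs the characteristic-zero hypothesis to rule out such examples). Everything else in the argument---the commutator identity and the vanishing of the trace of a commutator---is characteristic-free.
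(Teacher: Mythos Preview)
Your proof is correct and is precisely the classical argument; the paper itself does not supply a proof of this lemma but simply cites Jacobson~\cite{jacobson_rational_1935} and Kaplansky~\cite{kaplansky_jacobsons_1980}, whose arguments proceed along exactly the lines you give. The only quibble is cosmetic: your parenthetical about the identity matrix in characteristic $p$ is a little loose (the point is that in $\mathrm{M}_p$ over a field of characteristic $p$ the identity has trace $p=0$ and can indeed occur as a commutator), but the mathematics is sound.
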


\begin{remark}\label{rem:convexity} \Cref{thm:critical points} might lead one to suspect that $\energy$ is convex, but it is not. To see this, consider the normal matrices 
\[
A_0 = \begin{bmatrix}0 & 1 \\ -1 & 0\end{bmatrix} \qquad \mbox{and} \qquad  A_1 = \begin{bmatrix} 0 & 1 \\ 1 & 0 \end{bmatrix}.
\]
Since they are normal, $\energy(A_0) = 0 = \energy(A_1)$. However,
\[
    \energy((1-t)A_0 + tA_1) = 32t^2(1-t)^2 > 0
\]
for all $0<t<1$, so the interior of line segment connecting $A_0$ and $A_1$ consists entirely of non-normal matrices, and hence $\energy$ is not even quasiconvex, let alone convex. See \Cref{fig:energyplot}. Of course, we can pad $A_0$ and $A_1$ by zeros to get an analogous example for any $d > 2$.

\begin{figure}[t]
	\centering
		\includegraphics[width=2in]{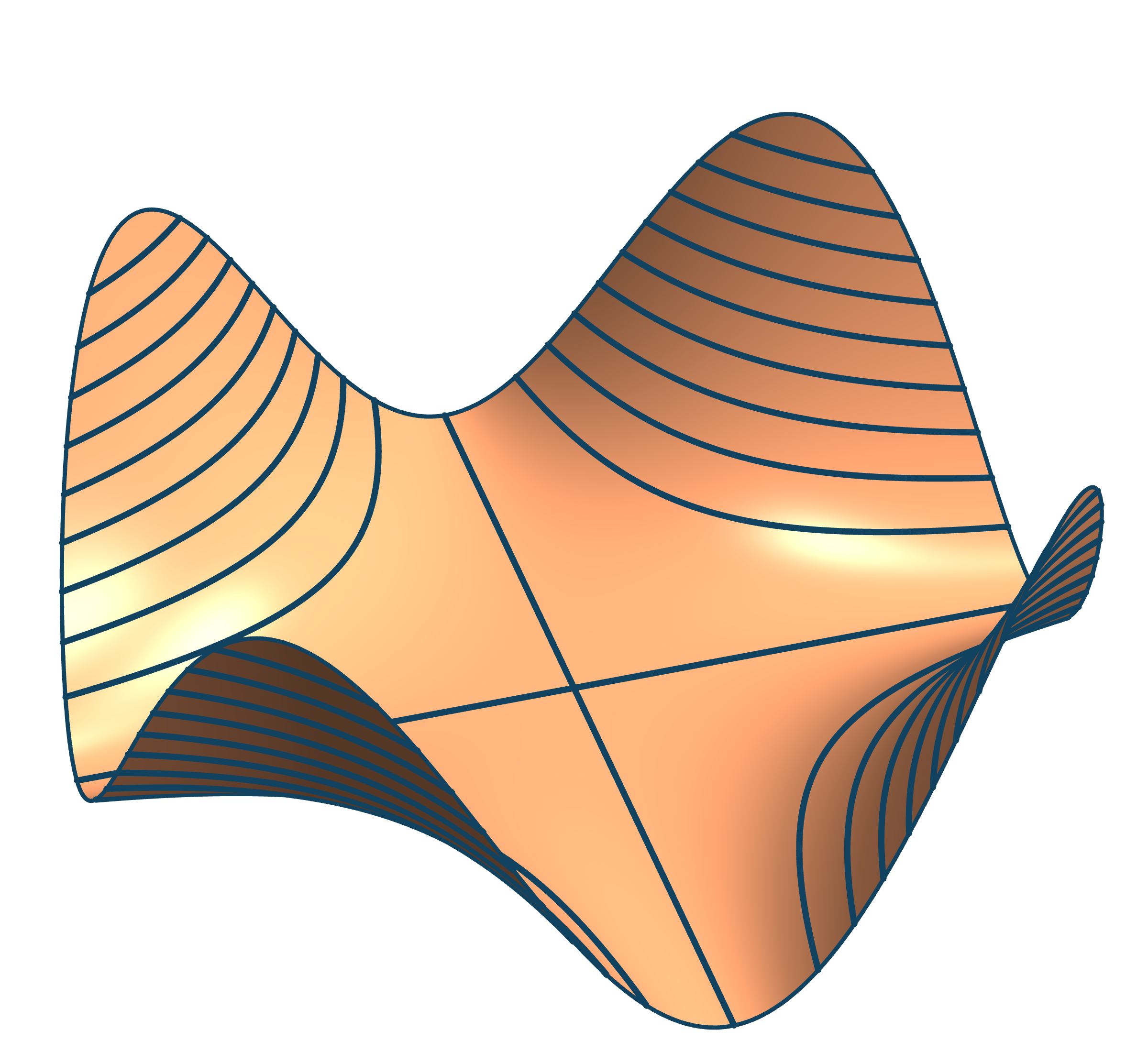}
	\caption{The graph of $\energy$ restricted to the collection of real matrices of the form $\begin{bmatrix} 0 & x \\ y & 0 \end{bmatrix}$.}
	\label{fig:energyplot}
\end{figure}


On the other hand, \Cref{thm:critical points} shows that $\energy$ is an \textit{invex function}. Recall that, as first defined by Hanson~\cite{hanson_sufficiency_1981} (later named by Craven~\cite{craven_duality_1981}), a function $f:\R^n \to \R$ is \textit{invex} if there exists a function $\eta:\R^{n} \times \R^n \to \R^n$ such that 
\[
f(x) - f(u) \geq \langle \eta(x,u), \nabla f(u) \rangle  \qquad x,u \in \R^n.
\]
A theorem of Craven and Glover~\cite{craven_invex_1985} (see also~\cite{ben-israel_what_1986}) says that a function is invex if and only if its critical points are all global minima; hence, $\energy$ is invex.
\end{remark}

\subsection{Gradient Flow of Non-Normal Energy}

Consider the negative gradient flow $\flow: \C^{d \times d} \times [0,\infty) \to \C^{d \times d}$ defined by
\begin{equation}\label{eqn:gradient_flow}
	\flow(A_0,0) = A_0, \qquad \frac{d}{dt}\flow(A_0,t) = -\nabla\energy(\flow(A_0,t)).
\end{equation}
Since $\energy$ is a real polynomial function on the real vector space $\C^{d \times d}$, the gradient flow cannot have limit cycles or other bad behavior~\cite{lojasiewicz_sur_1984}, so \Cref{thm:critical points} implies that, for any $A_0 \in \C^{d \times d}$, the limit $A_\infty \coloneq \lim_{t \to \infty} \flow(A_0,t)$ of the gradient flow is well-defined and normal.

From \eqref{eq:gradient}, we see that
\begin{equation} \label{eq:gradient in orbit}
	\nabla \energy(A) = -4[A,[A,A^\ast]] = -4(A[A,A^\ast]-[A,A^\ast]A) = 4 \left( \left. \frac{d}{d\epsilon}\right|_{\epsilon = 0} e^{\epsilon[A,A^\ast]}A e^{-\epsilon[A,A^\ast]}\right) = 4 \left( \left. \frac{d}{d\epsilon}\right|_{\epsilon = 0} e^{\epsilon[A,A^\ast]} \cdot A \right).
\end{equation}
Since $[A,A^\ast]$ is traceless, $e^{\epsilon[A,A^\ast]} \in \SL_d(\C)$ for any $\epsilon$, so the negative gradient flow lines $\flow(A_0,t)$ produced by any $A_0$ stay within the conjugation orbit of $A_0$. In particular, $A_\infty$ must have the same eigenvalues as $A_0$. Since real matrices are invariant under gradient flow, we have thus proved:

\begin{thm}\label{thm:unconstrained gradient flow}
	For any $A_0 \in \C^{d \times d}$, the matrix $A_\infty = \displaystyle\lim_{t \to \infty} \flow(A_0,t)$ exists, is normal, and has the same eigenvalues as $A_0$. Moreover, if $A_0$ is real, then so is $A_\infty$.
\end{thm}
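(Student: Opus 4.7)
The proof is essentially assembled from ingredients already laid out in the paragraph immediately preceding the theorem statement; the plan is mainly to organize them cleanly and supply the missing detail on why the flow lines preserve the real locus.

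First, I would establish existence of the limit. Since $\energy$ is a real polynomial function on the finite-dimensional real vector space $\C^{d\times d}$, the Łojasiewicz gradient inequality (as cited, via~\cite{lojasiewicz_sur_1984}) guarantees that every negative gradient trajectory of $\energy$ has finite length and converges to a single critical point as $t\to\infty$. By \Cref{thm:critical points}, the only critical points of $\energy$ are normal matrices, so the limit $A_\infty \coloneq \lim_{t\to\infty}\flow(A_0,t)$ exists and is normal. This handles the first two claims.

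For the eigenvalue assertion, I would use the rewriting of $-\nabla\energy(A)$ already displayed in \eqref{eq:gradient in orbit}: at each point $A$ along the trajectory, the velocity vector $-\nabla\energy(A)$ is exactly the infinitesimal generator at $A$ of the conjugation action by $e^{\epsilon[A,A^\ast]}$. Because $[A,A^\ast]$ is traceless (it is a commutator), the one-parameter family $e^{\epsilon[A,A^\ast]}$ lies in $\SL_d(\C)$, so the vector field is everywhere tangent to the $\SL_d(\C)$-conjugation orbit through $A_0$. Hence the trajectory $t\mapsto \flow(A_0,t)$ remains in this single orbit, and in particular $A_\infty$ is conjugate to $A_0$, so it has the same spectrum.

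For realness, the key observation is that the real matrices $\R^{d\times d}\subset\C^{d\times d}$ form a linear subspace which is invariant under the vector field $-\nabla\energy$: if $A$ has real entries then $A^\ast = A^T$ also has real entries, so $[A,[A,A^\ast]]$ is real, and therefore $\nabla\energy(A)\in \R^{d\times d}$. By uniqueness of solutions to the gradient flow ODE, a trajectory starting in $\R^{d\times d}$ remains there for all $t\geq 0$, and hence the limit $A_\infty$ is real as well.

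I do not anticipate any serious obstacle; the only delicate point is appealing to Łojasiewicz's theorem to upgrade ``every accumulation point is a critical point'' to ``the trajectory converges'', and this is standard for polynomial (hence real analytic) functions on a finite-dimensional Euclidean space. The eigenvalue and realness assertions are both essentially immediate from the explicit formula for $\nabla\energy$ derived in the proof of \Cref{thm:critical points}.
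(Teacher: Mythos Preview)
Your argument tracks the paper's almost exactly: Łojasiewicz for convergence, \Cref{thm:critical points} for normality, tangency to the $\SL_d(\C)$-conjugation orbit for spectrum preservation, and closure of $\R^{d\times d}$ under $A\mapsto [A,[A,A^\ast]]$ for realness. The added detail on realness is a welcome expansion of what the paper leaves as a one-clause remark.

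There is one small overstatement worth fixing. You write that the trajectory remains in a single conjugation orbit ``and in particular $A_\infty$ is conjugate to $A_0$.'' The flow line $\flow(A_0,t)$ lies in the orbit for every finite $t$, but the limit $A_\infty$ is only guaranteed to lie in the \emph{closure} of that orbit, and it need not be conjugate to $A_0$: for instance, if $A_0$ is a nonzero nilpotent matrix then $A_\infty = 0$, which is not similar to $A_0$. The conclusion about spectra is nonetheless correct, since the characteristic polynomial is constant along the orbit and is a continuous function of the matrix entries, hence passes to the limit. Replace ``$A_\infty$ is conjugate to $A_0$'' with this continuity argument and the proof is complete.
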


If $A_0$ and $A_\infty$ are as in the theorem and $\lambda_1, \dots, \lambda_d$ are their common eigenvalues, then the normality of $A_\infty$ implies that
\[
	\|A_\infty\|^2 = \sum_{i=1}^d |\lambda_i|^2.
\]
This immediately implies the following corollary.

\begin{cor}\label{cor:nilpotent_flow_to_zero}
    If $A_0$ is non-nilpotent, then its gradient flow \eqref{eqn:gradient_flow} is bounded away from zero. On the other hand, if $A_0$ is nilpotent, then the limit of gradient flow $A_\infty$ is the zero matrix.
\end{cor}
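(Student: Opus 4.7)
The plan is to derive both statements directly from the observation preceding the corollary, namely that $\|A_\infty\|^2 = \sum_{i=1}^d |\lambda_i|^2$ where $\lambda_1,\dots,\lambda_d$ are the common eigenvalues of $A_0$ and $A_\infty$. This identity is itself immediate from \Cref{thm:unconstrained gradient flow}: the limit $A_\infty$ is normal and shares the spectrum of $A_0$, and for a normal matrix the Frobenius norm squared equals the sum of the squared moduli of its eigenvalues.

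The nilpotent case is then instantaneous. If $A_0$ is nilpotent, every $\lambda_i$ vanishes, so $\|A_\infty\|^2=0$ and hence $A_\infty=0$. No additional work is needed.

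For the non-nilpotent case, the key is to get a lower bound on $\|\flow(A_0,t)\|$ that is uniform in $t$, not just in the limit. First I would invoke the spectrum-preserving property of gradient flow established in \Cref{thm:unconstrained gradient flow}, which shows that $\flow(A_0,t)$ has the same eigenvalues $\lambda_1,\dots,\lambda_d$ as $A_0$ for every $t \geq 0$. Then I would apply Schur's inequality, which states that for any $B \in \C^{d \times d}$ with eigenvalues $\lambda_1,\dots,\lambda_d$ one has $\|B\|^2 \geq \sum_{i=1}^d |\lambda_i|^2$ (with equality exactly when $B$ is normal). Since $A_0$ is non-nilpotent some $\lambda_j$ is nonzero, so this gives the uniform lower bound
\[
\|\flow(A_0,t)\|^2 \;\geq\; \sum_{i=1}^d |\lambda_i|^2 \;>\; 0,
\]
which is exactly the claim that the flow is bounded away from zero.

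I do not expect any genuine obstacle: the substance of the argument has already been bundled into \Cref{thm:unconstrained gradient flow}, and Schur's inequality is standard. The only small subtlety is remembering to upgrade ``the limit is large'' to ``the whole trajectory is large,'' which is handled by combining spectrum preservation with Schur's inequality rather than appealing only to the limit.
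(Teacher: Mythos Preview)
Your proposal is correct and is exactly the natural completion of the paper's ``this immediately implies'': the paper records only the identity $\|A_\infty\|^2=\sum_i|\lambda_i|^2$ and leaves the rest to the reader, while you have supplied the details. In particular, your use of Schur's inequality together with the spectrum preservation along the entire trajectory (established in the discussion around \eqref{eq:gradient in orbit}) correctly upgrades the limit statement to a uniform lower bound on $\|\flow(A_0,t)\|$, which the paper does not make explicit.
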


A widely-used statistic for describing the extent to which a matrix is non-normal is the \emph{Henrici departure from normality}~\cite{henrici_bounds_1962}. For a matrix $A \in \mathbb{C}^{d \times d}$ with eigenvalues $\lambda_i$, this is the quantity\footnote{Note that some authors, including Henrici, define the departure from normality to be the square root of this quantity.}
\[
\mathrm{Hen}(A) = \|A\|^2 - \sum_{i=1}^d |\lambda_i|^2.
\]

\begin{cor}\label{cor:henrici}
    Let $A_0 \in \mathbb{C}^{d \times d}$ and let $A_\infty$ be its limit under the gradient flow \eqref{eqn:gradient_flow}. The change in scale along gradient flow is equal to Henrici departure from normality,
    \[
	\|A_0\|^2 - \|A_\infty\|^2 = \mathrm{Hen}(A_0).
    \]
\end{cor}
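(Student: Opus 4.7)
The plan is to observe that the corollary follows almost immediately by chaining together three facts already in hand. First, by \Cref{thm:unconstrained gradient flow}, the limit $A_\infty$ exists, is normal, and shares its spectrum $\lambda_1,\dots,\lambda_d$ with $A_0$. Second, a standard consequence of the Spectral Theorem—already recorded just before the statement of \Cref{cor:henrici}—is that any normal matrix has squared Frobenius norm equal to the sum of squared moduli of its eigenvalues, so $\|A_\infty\|^2 = \sum_{i=1}^d |\lambda_i|^2$. Third, this sum is exactly what is subtracted from $\|A_0\|^2$ in the definition of $\mathrm{Hen}(A_0)$.

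Thus the proof is just a one-line substitution:
\[
\|A_0\|^2 - \|A_\infty\|^2 = \|A_0\|^2 - \sum_{i=1}^d |\lambda_i|^2 = \mathrm{Hen}(A_0).
\]

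There is essentially no obstacle here; all of the real content was already established in \Cref{thm:unconstrained gradient flow}, whose proof used Jacobson's lemma together with the observation from \eqref{eq:gradient in orbit} that gradient flow moves $A_0$ within its $\SL_d(\C)$-conjugation orbit and hence preserves the spectrum. The corollary simply repackages that spectral-preservation result as a quantitative statement about the drop in Frobenius norm along the flow, giving a geometric interpretation of the Henrici departure from normality as the amount of squared norm that gradient descent along $\energy$ must shed in order to reach a normal matrix.
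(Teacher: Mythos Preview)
Your proof is correct and matches the paper's own argument exactly: the paper records $\|A_\infty\|^2 = \sum_{i=1}^d |\lambda_i|^2$ immediately before the corollary and states that the result follows directly. The only quibble is a minor inaccuracy in your commentary---Jacobson's Lemma is used in the proof of \Cref{thm:critical points} rather than directly in \Cref{thm:unconstrained gradient flow}---but this does not affect the argument.
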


\subsection{Bound on the Distance to the Limit of Gradient Flow}

We now show that $A_\infty$ is not too much further from $A_0$ than the closest normal matrix, despite the fact that $A_\infty$ preserves features (spectrum, realness) that the closest normal matrix may not. We do so by a standard argument starting from a Łojasiewicz inequality. 

Since $\energy$ is the squared norm of a momentum map (\Cref{prop:momentum_map}), a result of Fisher~\cite{fisher_morse_2014} gives us the desired inequality:\footnote{Similar results appear in Neeman~\cite[Theorem A.1]{neeman_topology_1985}, Woodward~\cite[Lemma B.0.6]{woodward_yang-mills_2006}, and Lerman~\cite{lerman_gradient_2005}; both Woodward and Lerman credit Duistermaat with proving a version of this result in unpublished work, as do Mumford, Fogarty, and Kirwan~\cite[p.~166, footnote 58]{mumford_geometric_1994}.} 

\begin{prop}[{Fisher~\cite[Theorem 4.7]{fisher_morse_2014}}]\label{prop:Lojasiewicz inequality}
	There exist constants $\epsilon, c > 0$ so that for all $A \in \Cd$ with $\energy(A) < \epsilon$, 
	\[
		\|\nabla \energy(A)\| \geq c \energy(A)^{3/4}.
	\]
\end{prop}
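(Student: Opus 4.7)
The plan is to prove this Łojasiewicz inequality by first using the natural homogeneity of $\energy$ to reduce to a compact setting, and then exploiting the momentum map identity for $\nabla \energy$ to extract the specific exponent $3/4$.

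I would begin with a scaling reduction. From \eqref{eq:gradient} and the definition of $\energy$, we have $\energy(tA) = t^4 \energy(A)$ and $\nabla \energy(tA) = t^3 \nabla \energy(A)$ for any $t > 0$, so both $\|\nabla \energy(A)\|$ and $\energy(A)^{3/4}$ scale by $t^3$ under $A \mapsto tA$. Hence the inequality is scale-invariant, and it suffices to prove it on the compact unit sphere $S = \{A \in \Cd : \|A\| = 1\}$. On $S$, for any sufficiently small $\epsilon > 0$ the sublevel set $\{A \in S : \energy(A) < \epsilon\}$ lies in a small neighborhood of the compact set $\normal \cap S$, so by a finite cover argument it suffices to prove a local version of the estimate near each $N \in \normal \cap S$.

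Next, combining \Cref{prop:momentum_map} and the adjoint formula \eqref{eqn:adjoint_formula} with the Hermiticity of $\mu(A) = [A, A^\ast]$ gives $\nabla \energy(A) = [4\mu(A), A]$, so the proposition reduces to the geometric estimate
\[
    \|[\mu(A), A]\| \;\geq\; c \, \|\mu(A)\|^{3/2}
\]
for $A$ in a neighborhood of $N$. The main obstacle is establishing this particular $3/2$ exponent, which reflects the momentum map structure and cannot be recovered from mere real analyticity of $\energy$.

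To handle this, I would expand $A = N + B$ with $B$ small and use $\mu(N) = 0$ to write $\mu(A) = L(B) + [B, B^\ast]$, where $L(B) \coloneq [N, B^\ast] + [B, N^\ast]$ is the linearization. Since $N$ is normal, its commutant $\mathfrak{z}(N) = \ker(\mathrm{ad}_N)$ is $\ast$-closed, and a short calculation using cyclicity of the trace shows that $L(B)$ is orthogonal to $\mathfrak{z}(N) \cap \mathscr{H}_0(d)$ for every $B$. Thus the component of $\mu(A)$ in $\mathfrak{z}(N)$ is only of quadratic order in $B$, while its component in the orthogonal complement $\mathfrak{z}(N)^\perp$, where $\mathrm{ad}_N$ is bounded below, is generically linear in $B$. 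The sharp case is when $B$ lies close to $\ker L$: then $\mu(A) \sim [B, B^\ast]$ is quadratic in $B$ and $[\mu(A), A]$ picks up another factor of $B$, yielding precisely the $\|\mu(A)\|^{3/2}$ scaling. Packaging these observations into a single uniform local bound, in the spirit of a Kempf--Ness or slice-theorem argument adapted to the conjugation action of $\SU(d)$, is the technical heart of the proof.
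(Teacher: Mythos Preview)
The paper does not give a proof of this proposition at all: it is quoted as a black box from Fisher (with parallel attributions to Duistermaat, Neeman, Woodward, and Lerman in the accompanying footnote), justified only by the observation in \Cref{prop:momentum_map} that $\energy$ is the squared norm of a momentum map. So there is no argument in the paper to compare your proposal against.

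As for your sketch on its own terms: the homogeneity reduction is correct and clean---both $\|\nabla\energy(A)\|$ and $\energy(A)^{3/4}$ scale like $\|A\|^3$, so the inequality is scale-invariant and one may work on the unit sphere, where compactness localizes the problem to neighborhoods of $\normal \cap S$. Your linearization $\mu(N+B) = L(B) + [B,B^\ast]$ and the orthogonality $L(B) \perp \mathfrak{z}(N)$ are also correct and identify exactly the right structural feature. But the final step is not done: your ``sharp case'' heuristic correctly predicts the exponent, yet does not yield a uniform bound. One must control the full interaction between the $\mathfrak{z}(N)$ and $\mathfrak{z}(N)^\perp$ components of $\mu(A)$ simultaneously, not just in the extremal regime where $B$ sits in $\ker L$, and rule out cancellation between $[\mu(A),N]$ and $[\mu(A),B]$. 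That packaging is precisely what the slice-theorem/normal-form machinery in the cited references carries out, and it does not obviously simplify for this particular action; you have outlined the strategy but not supplied the technical heart you yourself flag.
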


Now we follow a standard argument (see, e.g., Lerman~\cite{lerman_gradient_2005}) to get bounds on the distance from $A_0$ to $A_\infty$. Certainly this distance is no larger than the length of the gradient flow path:
\begin{equation}\label{eq:flow length bound}
	\|A_0 - A_\infty\| \leq \int_0^\infty \left\|\frac{d}{dt} \flow(A_0,t)\right\|dt = \int_0^\infty \|\nabla \energy(\flow(A_0,t))\|dt.
\end{equation}

So long as $\energy(\flow(A_0,t)) < \epsilon$, 
\begin{multline*}
	-\frac{d}{dt} (\energy(\flow(A_0,t)))^{1/4} = -\frac{1}{4} \energy(\flow(A_0,t))^{-3/4} D\energy(\flow(A_0,t)))(-\nabla \energy(\flow(A_0,t))) \\
	= \frac{1}{4} \energy(\flow(A_0,t))^{-3/4}\|\nabla \energy(\flow(A_0,t))\|^2 \geq \frac{c}{4}\|\nabla\energy(\flow(A_0,t))\|,
\end{multline*}
where the last inequality follows since \Cref{prop:Lojasiewicz inequality} implies $\energy(\flow(A_0,t))^{-3/4}\|\nabla \energy(\flow(A_0,t))\| \geq c$.

Combining this with \eqref{eq:flow length bound} yields:
\[
	\|A_0 - A_\infty\| \leq \int_0^\infty \|\nabla \energy(\flow(A_0,t))\|dt \leq -\frac{4}{c} \int_0^\infty \frac{d}{dt}(\energy(\flow(A_0,t)))^{1/4} dt = \frac{4}{c} \energy(A_0)^{1/4}.
\]

Therefore, we have proved:

\begin{prop}\label{prop:distance bound}
	There exist constants $\epsilon, c > 0$ so that, if $\energy(A_0) < \epsilon$, then
	\[
		\|A_0 - A_\infty\| \leq \frac{4}{c} \energy(A_0)^{1/4}.
	\]
\end{prop}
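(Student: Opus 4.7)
The plan is to bound $\|A_0 - A_\infty\|$ by the length of the gradient flow path from $A_0$ to $A_\infty$ and then use the Łojasiewicz inequality of \Cref{prop:Lojasiewicz inequality} to convert the length integral into a telescoping integral that evaluates to a multiple of $\energy(A_0)^{1/4}$.

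First, I would observe that since $\flow(A_0, \cdot)$ is a path from $A_0$ to $A_\infty$, the triangle inequality gives
\[
    \|A_0 - A_\infty\| \leq \int_0^\infty \left\|\frac{d}{dt}\flow(A_0,t)\right\| dt = \int_0^\infty \|\nabla\energy(\flow(A_0,t))\| \, dt.
\]
Take $\epsilon$ and $c$ as in \Cref{prop:Lojasiewicz inequality} and assume $\energy(A_0) < \epsilon$. Since $\energy$ decreases along its negative gradient flow, we have $\energy(\flow(A_0,t)) < \epsilon$ for all $t \geq 0$, so the Łojasiewicz inequality $\|\nabla\energy(\flow(A_0,t))\| \geq c\,\energy(\flow(A_0,t))^{3/4}$ applies along the entire trajectory.

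Next, I would apply the chain rule to compute
\[
    -\frac{d}{dt}\bigl(\energy(\flow(A_0,t))\bigr)^{1/4} = \frac{1}{4}\energy(\flow(A_0,t))^{-3/4}\|\nabla\energy(\flow(A_0,t))\|^2,
\]
and then use the Łojasiewicz inequality on one factor of $\|\nabla\energy\|$ to conclude
\[
    -\frac{d}{dt}\bigl(\energy(\flow(A_0,t))\bigr)^{1/4} \geq \frac{c}{4}\|\nabla\energy(\flow(A_0,t))\|.
\]
Substituting this into the arc length bound and using the fundamental theorem of calculus together with $\energy(A_\infty) = 0$ (which holds because $A_\infty$ is normal by \Cref{thm:unconstrained gradient flow}) gives
\[
    \|A_0 - A_\infty\| \leq \frac{4}{c}\int_0^\infty -\frac{d}{dt}\bigl(\energy(\flow(A_0,t))\bigr)^{1/4} dt = \frac{4}{c}\bigl(\energy(A_0)^{1/4} - \energy(A_\infty)^{1/4}\bigr) = \frac{4}{c}\energy(A_0)^{1/4}.
\]

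The only subtlety is ensuring that the trajectory stays in the region where the Łojasiewicz inequality is valid, but this is immediate from the monotonicity of $\energy$ along its own negative gradient flow. Everything else is a standard manipulation; the real work was already done in establishing \Cref{prop:Lojasiewicz inequality}.
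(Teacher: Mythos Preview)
Your proof is correct and follows essentially the same approach as the paper: bound the distance by the arc length of the gradient flow path, then use the \L ojasiewicz inequality from \Cref{prop:Lojasiewicz inequality} to control the integrand by $-\frac{d}{dt}\energy(\flow(A_0,t))^{1/4}$ and integrate. You are slightly more explicit than the paper in noting that monotonicity of $\energy$ along the flow keeps the trajectory within the region where the \L ojasiewicz inequality applies, which is a welcome clarification.
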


Comparing to the Henrici estimate (\Cref{prop:henrici bound}), we see that, at least when $\energy(A_0)$ is small, the normal matrix $A_\infty$ we get by doing gradient descent is not much further from $A_0$ than the closest normal matrix is, even though $A_\infty$ has the same spectrum as $A_0$ and is real if $A_0$ is.

\begin{remark}
     The closest normal matrix to a given $A_0 \in \Cd$ can computed explicitly by Ruhe's algorithm~\cite{ruhe1987closest},\footnote{Note that, as Higham points out~\cite{gover_matrix_1989}, there is a missing minus sign before the determinant in the definition of $\theta$ in step 2 of the published version of Ruhe's Algorithm J.} but the actual closest normal matrix does not have the same spectrum as $A_0$ and may be complex even if $A_0$ is real (see discussion in Chu~\cite{chu_least_1991} and Guglielmi and Scalone~\cite{guglielmi2019computing}). This suggests that the gradient descent approach to finding a nearby normal matrix may be useful in situations where one is interested in preserving structural properties of the initialization. These observations are born out by numerical experiments, and indeed $A_\infty$ gets relatively closer to the closest normal matrix when $A_0$ is almost normal to begin with: see \Cref{fig:distance comparison}.
\end{remark}

\begin{figure}[t]
	\centering
		\includegraphics[width=1.5in]{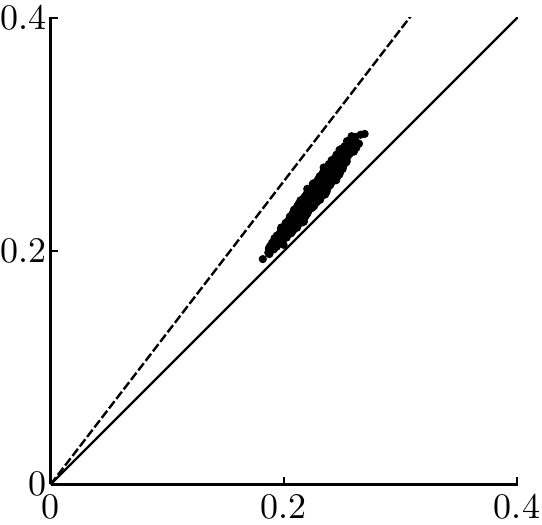} \qquad
		\includegraphics[width=1.5in]{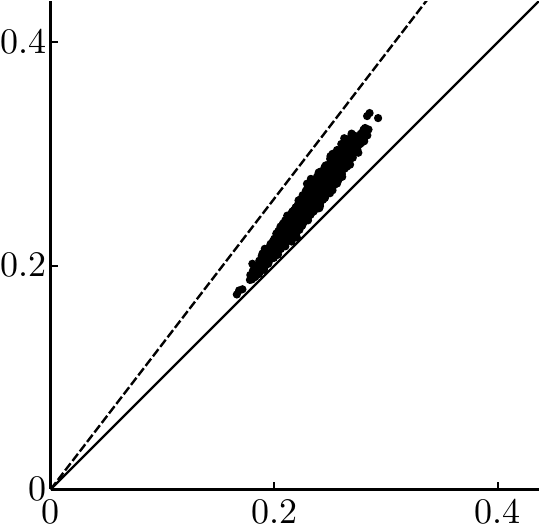} \qquad 
		\includegraphics[width=1.5in]{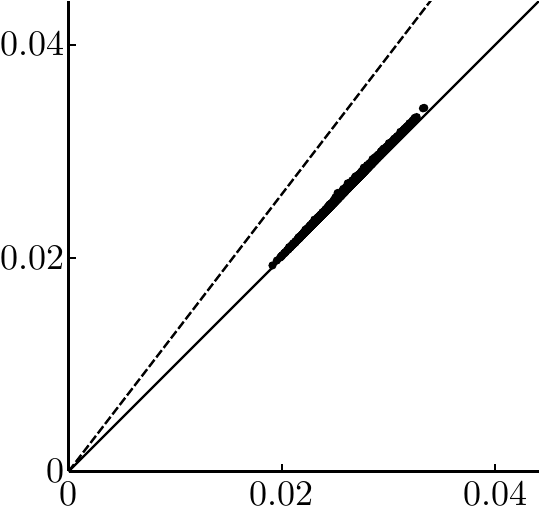}
	\caption[Numerical comparison of distance to the closest normal matrix and to the limit of gradient flow.]{Left: We generated 10,000 initial matrices $A_0 \in \C^{20 \times 20}$ by letting the real and imaginary parts of each entry be drawn from a standard Gaussian and then normalizing so that $A_0$ has Frobenius norm 1. We computed the closest normal matrix $\widehat{A}$ using Ruhe's algorithm~\cite{ruhe1987closest} and $A_\infty = \displaystyle\lim_{t \to \infty} \flow(A_0,t)$ using a very simple gradient descent with fixed step sizes, and then plotted the point $(\|\widehat{A}-A_0\|^2, \|A_\infty - A_0\|^2)$. The ratios $\frac{\|A_\infty - A_0\|^2}{\|\widehat{A}-A_0\|^2}$ were all in the interval $[1.028,1.161]$. Center: The same computations and visualization, except the initial matrices $A_0$ were all $20 \times 20$ real matrices. In this case the $\frac{\|A_\infty - A_0\|^2}{\|\widehat{A}-A_0\|^2}$ were all in the interval $[1.023,1.196]$. Right: The same computations and visualization, but with nearly normal initial matrices $A_0 \in \C^{20 \times 20}$. More precisely, we generated $B \in \C^{20 \times 20}$ by normalizing a matrix of standard complex Gaussians, found the closest normal matrix $\widehat{B}$, then added an $\mathcal{N}(0,0.0075)$ random variate to the real and complex parts of each entry of $\widehat{B}$, and let $A_0$ be the normalization of this matrix, so that $A_0$ has Frobenius norm 1 and is already close to being normal. In this case the $\frac{\|A_\infty - A_0\|^2}{\|\widehat{A}-A_0\|^2}$ were all in the interval $[1.009,1.036]$. In all three plots, the solid line has slope 1 and the dashed line has slope $1.3$. Code for these experiments is available on GitHub~\cite{normal-matrices-code}.}
	\label{fig:distance comparison}
\end{figure}

\section{Unit Norm Normal Matrices} 
\label{sec:unit-norm}

We have seen in \Cref{cor:henrici} that the gradient flow of $\energy$ does not preserve the Frobenius norm. If we want a flow that preserves the norm, we should consider the restriction of $\energy$ to the space $\mathcal{U}_d$ of $d \times d$ matrices with Frobenius norm 1. Geometrically, $\mathcal{U}_d$ is just the $(2d^2-1)$-dimensional unit sphere in $\Cd$. 

Let $\renergy: \mathcal{U}_d \to \R$ be the restriction of $\energy$ to $\mathcal{U}_d$ and let $\rflow:\mathcal{U}_d \times [0,\infty) \to \mathcal{U}_d$ be the associated gradient flow; i.e.,
\[
	\rflow(A_0,0) = A_0 \qquad \frac{d}{dt} \rflow(A_0,t) = -\grad \renergy(\rflow(A_0,t)),
\]
where $\grad$ is the Riemannian gradient on $\mathcal{U}_d$.

\subsection{Gradient Flow of Restricted Non-Normal Energy}

The normal matrices in $\mathcal{U}_d$ are exactly the global minima of $\renergy$; the goal is to show that almost every matrix in $\mathcal{U}_d$ flows to a normal matrix under the gradient flow:

\begin{thm}\label{thm:constrained gradient flow}
	For any non-nilpotent $A_0 \in \mathcal{U}_d$, the matrix $A_\infty \coloneq \displaystyle\lim_{t\to \infty} \rflow(A_0,t)$ exists, is normal, and has Frobenius norm 1. Moreover, if $A_0$ is real, then so is $A_\infty$.
\end{thm}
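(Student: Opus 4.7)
The plan is to reduce the constrained flow on $\mathcal{U}_d$ to the unconstrained flow analyzed in \Cref{thm:unconstrained gradient flow}, by observing that the radial projection of the unconstrained flow onto $\mathcal{U}_d$ is a time reparametrization of the constrained flow. First I would compute the Riemannian gradient of $\renergy$ on the unit sphere $\mathcal{U}_d$: projecting $\nabla \energy(A) = -4[A,[A,A^\ast]]$ onto the tangent space $\{B : \langle A, B\rangle = 0\}$ at $A \in \mathcal{U}_d$ gives
\[
\grad \renergy(A) = \nabla \energy(A) - \langle \nabla \energy(A), A\rangle A,
\]
and a short calculation using that $[A,A^\ast]$ is Hermitian yields $\langle \nabla \energy(A), A\rangle = 4\energy(A)$, so that
\[
\grad \renergy(A) = -4[A,[A,A^\ast]] - 4\energy(A)\, A.
\]

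Next, let $A(t) \coloneq \flow(A_0,t)$ be the unconstrained gradient trajectory starting at $A_0 \in \mathcal{U}_d$, and set $B(t) \coloneq A(t)/\|A(t)\|$. Since $A_0$ is non-nilpotent, \Cref{cor:nilpotent_flow_to_zero} guarantees that $\|A(t)\|$ is bounded below by $\|A_\infty\| > 0$, while $\frac{d}{dt}\|A(t)\|^2 = -8\energy(A(t)) \leq 0$ (equivalently, \Cref{cor:henrici}) bounds it above by $\|A_0\| = 1$, so $B(t)$ is well-defined and smooth for all $t \geq 0$. Substituting the homogeneity relations $[A,[A,A^\ast]] = \|A\|^3 [B,[B,B^\ast]]$ and $\energy(A) = \|A\|^4\, \energy(B)$ into the definition of $B$, a direct computation produces the identity
\[
\dot{B}(t) = -\|A(t)\|^2 \,\grad \renergy(B(t)).
\]
Thus $B$ is a smooth reparametrization of the $\renergy$-gradient flow: setting $s(t) \coloneq \int_0^t \|A(\tau)\|^2 \,d\tau$, the bounds on $\|A(t)\|$ make $s : [0,\infty) \to [0,\infty)$ an increasing diffeomorphism, and $\rflow(A_0, s) = B(t(s))$.

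Finally, I would invoke \Cref{thm:unconstrained gradient flow}: the limit $A_\infty = \lim_{t \to \infty} A(t)$ exists and is normal, and is non-zero because $A_0$ is non-nilpotent. Consequently,
\[
\lim_{s \to \infty} \rflow(A_0, s) = \lim_{t \to \infty} \frac{A(t)}{\|A(t)\|} = \frac{A_\infty}{\|A_\infty\|},
\]
which is normal and has unit Frobenius norm. Realness of $A_0$ is preserved because it is preserved by both the unconstrained flow (\Cref{thm:unconstrained gradient flow}) and by radial rescaling. The main technical step is establishing the reparametrization identity $\dot B = -\|A\|^2 \grad \renergy(B)$; once this is in hand the conclusion is essentially immediate from the results of the previous section, and the role of the non-nilpotence hypothesis is precisely to keep the unconstrained trajectory (and hence its projection $B$) away from the singular locus at the origin where radial projection breaks down.
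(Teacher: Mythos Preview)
Your argument is correct and takes a genuinely different route from the paper's. The paper works intrinsically on $\mathcal{U}_d$: it first invokes a \L{}ojasiewicz argument for the existence of a unique limit, then shows (\Cref{lem:gradient dot product}, \Cref{prop:non-nilpotency}) that the quantity $s(A)=\sum|\lambda_i|^2$ is non-decreasing along the constrained flow, so non-nilpotency persists to the limit, and finally uses Jacobson's Lemma (\Cref{prop:critical points}) to show that every non-minimizing critical point of $\renergy$ is nilpotent. You instead push everything back to the ambient flow: your reparametrization identity $\dot B = -\|A\|^2\,\grad\renergy(B)$ and the lower bound $\|A(t)\|\geq \|A_\infty\|>0$ exhibit the constrained trajectory explicitly as a time-change of the radial projection of the unconstrained one, so existence and normality of the limit come for free from \Cref{thm:unconstrained gradient flow} and \Cref{cor:nilpotent_flow_to_zero}.

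Your approach is shorter and yields a bonus the paper does not state: it identifies the constrained limit explicitly as $A_\infty/\|A_\infty\|$, where $A_\infty$ is the unconstrained limit, so in particular the eigenvalues of the constrained limit are the eigenvalues of $A_0$ rescaled by $\left(\sum_i|\lambda_i|^2\right)^{-1/2}$. On the other hand, the paper's intrinsic analysis produces the classification of critical points of $\renergy$ (\Cref{prop:critical points}), which it then uses to deduce \Cref{cor:no local mins}; your route bypasses that structural statement entirely, so if one also wants the corollary about local minima, the paper's machinery is still needed.
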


\begin{remark}
	In GIT terms, we are looking at (a linearization of) the projective adjoint action of $\SL(d)$ on $\mathbb{P}(\mathfrak{sl}(d)^\ast)$, and the fact that we have to assume $A_0$ is non-nilpotent in \Cref{thm:constrained gradient flow} is equivalent to the fact that the non-nilpotent matrices are exactly the semi-stable points with respect to this action~\cite{kostant_lie_1963} (see~\cite[Proposition~4.4]{mumford_geometric_1994}). 
\end{remark}

Since $\renergy$ is a polynomial function defined on a real-analytic submanifold of Euclidean space, it will have a Łojasiewicz exponent (cf.~\cite[Corollary~4.2]{bodmann_frame_2015}), and hence the gradient flow will have a single limit point~\cite{lojasiewicz_sur_1984}, proving the existence of $A_\infty$.

Since the non-nilpotent matrices form an open, dense subset of $\mathcal{U}_d$, \Cref{thm:constrained gradient flow} implies that almost every member of any neighborhood of a non-minimizing critical point will flow to a normal matrix; that is, a global minimum of $\renergy$. Hence, the non-minimizing critical points of $\renergy$ cannot be basins of attraction. Since $\renergy$ has a Łojasiewicz exponent, an argument analogous to \cite[Theorem~3]{absil_stable_2006} shows that all local minima must be basins of attraction. Hence we have the following corollary.

\begin{cor}\label{cor:no local mins}
	Every local minimum of $\renergy$ must be a global minimum; that is, a normal matrix.
\end{cor}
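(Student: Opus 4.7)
The plan is to combine \Cref{thm:constrained gradient flow} with a Lyapunov stability property for local minima of $\renergy$, following the text's pointer to the argument of Absil, Mahony, and Andrews.

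The key intermediate step is the following stability lemma: if $M \in \mathcal{U}_d$ is a local minimum of $\renergy$, then for every $\epsilon > 0$ there exists $\delta > 0$ such that any trajectory $\rflow(A_0, \cdot)$ initialized in the ball $B(M, \delta) \subset \mathcal{U}_d$ remains inside $B(M, \epsilon)$ and converges to a limit in $\overline{B(M, \epsilon)}$. I would prove this by applying the Łojasiewicz inequality to the real-analytic function $\renergy$ at $M$: there exist $c > 0$, $\theta \in (0, 1/2]$, and a neighborhood $U$ of $M$ on which
\[
\|\grad \renergy(A)\| \geq c \, (\renergy(A) - \renergy(M))^{1-\theta},
\]
where local minimality lets me drop the absolute values. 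The standard bootstrap (the trajectory length is controlled as long as it stays in $U$, and a small enough length keeps it in $U$) then gives
\[
\int_0^\infty \|\grad \renergy(\rflow(A_0, t))\| \, dt \leq \tfrac{1}{c\theta}\, (\renergy(A_0) - \renergy(M))^\theta,
\]
which tends to $0$ as $A_0 \to M$, proving both stability and convergence of the trajectory to a limit point near $M$.

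With the stability lemma in hand, the conclusion is immediate. Let $M$ be a local minimum of $\renergy$ and fix any $\epsilon > 0$. Apply the stability lemma to obtain $\delta > 0$. The non-nilpotent unit-norm matrices form an open dense subset of $\mathcal{U}_d$, so $B(M, \delta)$ contains some non-nilpotent $A_0$. By \Cref{thm:constrained gradient flow}, the limit $A_\infty = \lim_{t \to \infty} \rflow(A_0, t)$ is normal, while stability forces $\|A_\infty - M\| \leq \epsilon$. Since $\epsilon > 0$ was arbitrary and $\normal \cap \mathcal{U}_d$ is closed, $M \in \normal$, which is exactly the assertion that $M$ is a global minimum.

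The main obstacle is making the bootstrap in the stability lemma rigorous: one must verify that trajectories from a sufficiently small ball about $M$ stay inside the Łojasiewicz neighborhood $U$ for all time, so that the Łojasiewicz estimate continues to apply along the entire trajectory. This is the heart of the Absil--Mahony--Andrews argument and is standard but requires careful bookkeeping of constants; once it is in place, everything else reduces to a direct application of \Cref{thm:constrained gradient flow} and the density of the non-nilpotent locus.
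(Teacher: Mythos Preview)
Your proposal is correct and follows essentially the same approach as the paper: the paper also combines \Cref{thm:constrained gradient flow} with the density of non-nilpotent matrices and the Absil--Mahony--Andrews stability argument (phrased there as ``local minima must be basins of attraction''), concluding that a local minimum cannot be a non-minimizing critical point. Your version simply unpacks the stability argument in more detail and finishes with an explicit $\epsilon$-argument rather than the basin-of-attraction language, but the logic is the same.
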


We have already shown that the gradient flow of $\overline{\energy}$ converges to a single limit point $A_\infty$. The remainder of this subsection will be devoted to proving the remaining statements of \Cref{thm:constrained gradient flow} through several supporting results. The strategy for proving the rest of the first sentence of \Cref{thm:constrained gradient flow} is to show that the gradient flow preserves non-nilpotency, and that all non-minimizing critical points must be nilpotent. As with \Cref{thm:unconstrained gradient flow}, the last sentence will follow because the real submanifold of $\mathcal{U}_d$ is invariant under the gradient flow.

\begin{prop} \label{prop:intrinsic gradient}
	The intrinsic gradient of $\renergy$ on $\mathcal{U}_d$ is
	\[
		\grad \renergy(A) = -4([A,[A,A^\ast]] + \renergy(A) A).
	\]
\end{prop}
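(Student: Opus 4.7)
The plan is to realize $\grad \renergy(A)$ as the orthogonal projection of the ambient gradient $\nabla \energy(A)$ onto the tangent space $T_A \mathcal{U}_d$, and then combine the already-computed formula for $\nabla \energy$ with a homogeneity argument to evaluate the normal component explicitly. Since $\mathcal{U}_d$ is the unit sphere in $\C^{d \times d}$ with respect to $\langle \cdot, \cdot \rangle$, the tangent space at $A$ is the real hyperplane orthogonal to $A$, so the standard formula
\[
\grad \renergy(A) = \nabla \energy(A) - \langle \nabla \energy(A), A \rangle A
\]
applies.

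The first term is immediate: from equation (2.3) in the proof of \Cref{thm:critical points} we have $\nabla \energy(A) = -4[A,[A,A^\ast]]$. For the second term, the key observation is that $\energy$ is homogeneous of degree $4$, since $\energy(tA) = \|[tA,tA^\ast]\|^2 = t^4 \energy(A)$ for all real $t$. Euler's identity for homogeneous functions then gives
\[
\langle \nabla \energy(A), A \rangle = 4\, \energy(A),
\]
and on the unit sphere $\energy(A) = \renergy(A)$. Substituting yields exactly $\grad \renergy(A) = -4([A,[A,A^\ast]] + \renergy(A) A)$.

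I do not anticipate any real obstacle here: both ingredients (the ambient gradient and the degree-$4$ homogeneity) are essentially already in the paper. If one wanted to avoid invoking Euler's identity, one could instead compute $\langle -4[A,[A,A^\ast]], A\rangle = -4\,\mathrm{Re}\,\tr(A^\ast[A,[A,A^\ast]])$ directly by cycling the trace to rewrite it as $-4\,\mathrm{Re}\,\tr([A,A^\ast][A,A^\ast]^\ast) = -4\|[A,A^\ast]\|^2 = -4\renergy(A)$, recovering the same value. That is the only bookkeeping that could plausibly go wrong, but both routes arrive at the stated formula cleanly.
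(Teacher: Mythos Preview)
Your main argument is correct and a touch slicker than the paper's: both start from the projection formula $\grad\renergy(A)=\nabla\energy(A)-\langle\nabla\energy(A),A\rangle A$ and the ambient gradient $\nabla\energy(A)=-4[A,[A,A^\ast]]$, but where the paper evaluates $\langle\nabla\energy(A),A\rangle$ by a direct trace expansion (expanding $[A,[A,A^\ast]]^\ast A$, cycling, and collapsing to $4\|[A,A^\ast]\|^2$), you instead note that $\energy$ is real-homogeneous of degree~$4$ and invoke Euler's identity. That shortcut is legitimate here and sidesteps the trace bookkeeping entirely.

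One small slip in your parenthetical alternative: the sign is off. With $C=[A,A^\ast]$ Hermitian, cycling gives
\[
\tr\bigl(A^\ast[A,C]\bigr)=\tr\bigl(C(A^\ast A-AA^\ast)\bigr)=-\tr(C^2)=-\|C\|^2,
\]
so $\langle\nabla\energy(A),A\rangle=-4\cdot(-\|C\|^2)=+4\renergy(A)$, not $-4\renergy(A)$. This is exactly the paper's computation, and it agrees with your Euler-identity value; so the ``recovering the same value'' claim is right once the sign is fixed, and your main line is unaffected.
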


\begin{proof}
	Geometrically, $\grad \renergy(A)$ is the projection  of $\nabla \energy(A)$ onto the tangent space $T_A \mathcal{U}_d = \mathrm{span}(\{A\})^\bot$:
	\[
		\grad \renergy(A) = \nabla \energy(A) - \langle \nabla \energy(A), A\rangle A.
	\]
	We know from \eqref{eq:gradient} that $\nabla \energy(A) = -4[A,[A,A^\ast]]$, so the fact that $[A,A^\ast]$ is Hermitian implies
	\begin{multline} \label{eq:riemannian gradient}
		\langle \nabla \energy(A), A\rangle = -4\mathrm{Re}\tr([A,[A,A^\ast]]^\ast A) = -4\mathrm{Re}\tr([A,A^\ast]A^\ast A - A^\ast [A,A^\ast]A) \\ = 4\mathrm{Re}\tr([A,A^\ast][A,A^\ast]) = 4\|[A,A^\ast]\|^2 = 4 \renergy(A)
	\end{multline}
	by the linearity and cyclic invariance of trace, and the result follows.
\end{proof}

Since $[A,A^\ast]$ is traceless, notice that
\[
	\grad \renergy(A) = 4\left. \frac{d}{dt}\right|_{t=0} e^{-t \renergy(A)} e^{t[A,A^\ast]} A e^{-t[A,A^\ast]} = 4\left. \frac{d}{dt}\right|_{t=0} (e^{t[A,A^\ast]},e^{-t \renergy(A)}) \cdot A
\]
is tangent to the $\SL_d(\C) \times \C^\times$-orbit of $A$, where the action of $\SL_d(\C) \times \C^\times$ on $\Cd$ is defined by $(g,z) \cdot A \coloneq z gAg^{-1}$. 
	
We could use this to show that the negative gradient flow preserves non-nilpotency, but extending to the limit poses challenges, so we adopt a different approach. For $A \in \Cd$, define 
\[
	s(A) \coloneq \sum_{i=1}^d |\lambda_i|^2,
\]
where $\lambda_1, \dots , \lambda_d$ are the eigenvalues of $A$. The nilpotent matrices are precisely the vanishing locus of $s$.

\begin{lem}\label{lem:gradient dot product}
	For any $A \in \mathcal{U}_d$, 
	\[
		\langle -\grad \renergy(A), \grad s(A) \rangle = 8 s(A) \renergy(A),
	\]
	where $\grad s(A)$ is the intrinsic gradient of $s$ in $\mathcal{U}_d$.
\end{lem}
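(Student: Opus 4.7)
The plan is to recognize the left-hand side as a directional derivative and split $\grad \renergy$ using the two decompositions we have for it. Since $-\grad \renergy(A) \in T_A \mathcal{U}_d$, the defining property of the intrinsic gradient yields
\[
\langle -\grad \renergy(A), \grad s(A) \rangle = -Ds(A)(\grad \renergy(A)).
\]
Combining \Cref{prop:intrinsic gradient} with \eqref{eq:riemannian gradient} gives $\grad \renergy(A) = \nabla \energy(A) - 4\renergy(A)\,A$, so the right-hand side becomes
\[
-Ds(A)(\nabla \energy(A)) + 4\renergy(A)\, Ds(A)(A).
\]

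I would then evaluate each directional derivative separately. For the first, \eqref{eq:gradient in orbit} exhibits $\nabla \energy(A)$ as tangent to the $\SL_d(\C)$-conjugation orbit through $A$; since conjugation preserves spectrum, $s$ is constant along this orbit and $Ds(A)(\nabla \energy(A)) = 0$. For the second, the function $s$ is homogeneous of degree $2$ under positive real scaling—scaling $A$ by $t>0$ scales every eigenvalue by $t$, so $s(tA) = t^2 s(A)$—and Euler's identity gives $Ds(A)(A) = 2s(A)$. Substituting produces $0 + 4\renergy(A)\cdot 2s(A) = 8 s(A)\renergy(A)$, as desired.

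The main subtlety is the smoothness of $s$: because $s$ is defined through eigenvalues, it is not a polynomial in the entries of $A$ and may fail to be smooth where eigenvalues coalesce. This is a mild obstacle rather than a serious one, since $s$ is smooth on the open dense subset of $\Cd$ with simple spectrum (via the implicit function theorem applied to the characteristic polynomial), which is enough to run the argument there; the identity then extends to all of $\mathcal{U}_d$ by continuity of both sides. Thus the whole proof is really just the orbit-invariance of the spectrum together with the degree-$2$ homogeneity of $s$, packaged through the sphere-to-ambient relation for the intrinsic gradient.
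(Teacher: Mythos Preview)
Your argument is correct and follows essentially the same route as the paper: decompose $\grad\renergy(A)=\nabla\energy(A)-4\renergy(A)A$, kill the $\nabla\energy(A)$ term by orbit-invariance of the spectrum (using \eqref{eq:gradient in orbit}), and evaluate the radial term via degree-$2$ homogeneity of $s$. The only cosmetic difference is that the paper also expands $\grad s(A)$ in terms of $\nabla s(A)$ before pairing, whereas you pass directly to the directional derivative $Ds(A)(\,\cdot\,)$; your version is marginally cleaner. Your remark about the smoothness of $s$ is a genuine point the paper glosses over, and your density-plus-continuity patch is the natural fix.
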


\begin{proof}
	Note, first of all, that $\langle A, \grad s(A) \rangle = 0$, since $\grad s(A) \in T_A \mathcal{U}_d = \mathrm{span}(\{A\})^\bot$. Therefore,
	\begin{align*}
		\langle -\grad \renergy(A), \grad s(A) \rangle & = \langle -\nabla \energy(A) + 4\renergy(A) A, \grad s(A) \rangle \\
        & = \langle -\nabla \energy(A), \grad s(A) \rangle \\
        & = \langle -\nabla \energy(A), \nabla s(A) - \langle \nabla s(A), A \rangle A\rangle \\
		& = -\langle \nabla \energy(A), \nabla s(A)\rangle + \langle \nabla s(A),A \rangle \langle \nabla \energy(A),  A \rangle \\
		& = -\langle \nabla \energy(A), \nabla s(A) \rangle + 4 \langle \nabla s(A), A \rangle \renergy(A), 
	\end{align*}
	using \eqref{eq:riemannian gradient} in the first and last equalities.
	
	We know from \eqref{eq:gradient in orbit} and the following sentence that $\nabla \energy(A)$ lies in the conjugation orbit of $A$. But this means that $\nabla \energy(A)$ must be tangent to the level set of $s$ passing through $A$, since conjugation preserves eigenvalues, and hence fixes $s$. Therefore, $\langle \nabla \energy(A), \nabla s(A) \rangle = 0$ and we have shown that 
	\[
		\langle -\grad \renergy(A), \grad s(A) \rangle = 4 \langle \nabla s(A), A \rangle \renergy(A).
	\]
	
	By definition of the gradient, the inner product is a directional derivative,
	\[
		\langle \nabla s(A), A \rangle = D s(A)(A) = \lim_{t \to 0} \frac{s(A + t A) - s(A)}{t} = \lim_{t \to 0} \frac{(1+t)^2 s(A) - s(A)}{t} = 2s(A),
	\]
	completing the proof.
\end{proof}

\begin{prop}\label{prop:non-nilpotency}
	If $A_0 \in \mathcal{U}_d$ is non-nilpotent, then so is $A_t \coloneq \rflow(A_0,t)$ for all $t \in [0,\infty)$ and so is $A_\infty \coloneq \displaystyle\lim_{t \to \infty} \rflow(A_0,t)$.
\end{prop}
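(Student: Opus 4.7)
The strategy is to track the eigenvalue-size function $s$ along the gradient flow and show that it never decreases. Since the nilpotent matrices in $\mathcal{U}_d$ are exactly the vanishing locus of $s$, it suffices to prove $s(A_t) \geq s(A_0) > 0$ for all finite $t$, and then to extend the conclusion to the limit by continuity. All the geometric work needed for this is already packaged in \Cref{lem:gradient dot product}; the remaining argument is essentially a one-line ODE.

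Concretely, since $\tfrac{d}{dt} A_t = -\grad \renergy(A_t)$ lies in $T_{A_t}\mathcal{U}_d$, the chain rule together with the tangency-based identity $\langle \nabla s, v\rangle = \langle \grad s, v\rangle$ for $v$ tangent to $\mathcal{U}_d$ yields
$$
\frac{d}{dt} s(A_t) \;=\; \bigl\langle \grad s(A_t),\, -\grad \renergy(A_t) \bigr\rangle \;=\; 8\, s(A_t)\, \renergy(A_t),
$$
where the second equality is precisely \Cref{lem:gradient dot product}. This is a scalar linear ODE along the flow, with explicit solution
$$
s(A_t) \;=\; s(A_0)\, \exp\!\left(8 \int_0^t \renergy(A_\tau)\, d\tau\right).
$$
Because $\renergy \geq 0$ on all of $\mathcal{U}_d$, the exponential factor is at least $1$, so $s(A_t) \geq s(A_0) > 0$ for every $t \in [0,\infty)$. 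This proves that $A_t$ is non-nilpotent for all finite $t$.

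For the limit, $A_\infty$ exists by the Łojasiewicz argument already invoked immediately after the statement of \Cref{thm:constrained gradient flow}, and $s$ is continuous on $\Cd$ (the eigenvalues of a matrix depend continuously on its entries, so $\sum_i |\lambda_i|^2$ does too). Therefore $s(A_\infty) = \lim_{t \to \infty} s(A_t) \geq s(A_0) > 0$, and $A_\infty$ is non-nilpotent as well. The main (and essentially only) obstacle would be any concern about differentiability of $s$, but that is only needed along the smooth curve $t \mapsto A_t$, where the chain-rule computation can be done on the open dense set of matrices with simple spectrum and extended by continuity — no issue arises in practice, since the differential identity only appears inside an integral. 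With \Cref{lem:gradient dot product} in hand, the proof is immediate.
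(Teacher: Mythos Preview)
Your proof is correct and follows essentially the same approach as the paper: both invoke \Cref{lem:gradient dot product} to show that $s$ is non-decreasing along the negative gradient flow of $\renergy$, hence remains bounded below by $s(A_0)>0$ for all $t$ and in the limit. The only cosmetic difference is that you integrate the scalar ODE explicitly to get the exponential factor, whereas the paper simply observes $\langle -\grad\renergy,\grad s\rangle \geq 0$ and concludes monotonicity directly.
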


\begin{proof}
	For any $A \in \mathcal{U}_d$,  \Cref{lem:gradient dot product} implies that 
    \[
        \langle -\grad \renergy(A), \grad s(A) \rangle = 8 s(A) \renergy(A) \geq 0.
    \]
    Therefore, $s(A)$ must be non-decreasing along the negative gradient flow lines of $\renergy$, so $s(A_t) \geq s(A_0) > 0$ for all $t \in [0, \infty)$, and in the limit we also have $s(A_\infty) \geq s(A_0) > 0$. Hence, $A_t$ and $A_\infty$ must be non-nilpotent.
\end{proof}

In other words, gradient flow preserves non-nilpotency, including in the limit, so we have completed the first step in our strategy for proving \Cref{thm:constrained gradient flow}. We now proceed with the second step.

\begin{prop}\label{prop:critical points}
	All non-minimizing critical points of $\renergy$ are nilpotent.
\end{prop}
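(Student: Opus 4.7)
The key observation is that \Cref{lem:gradient dot product} already does almost all of the work: it gives the identity
\[
\langle -\grad \renergy(A), \grad s(A) \rangle = 8 s(A) \renergy(A)
\]
for every $A \in \mathcal{U}_d$. The plan is simply to evaluate this identity at a critical point of $\renergy$ and read off the conclusion.

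Suppose $A \in \mathcal{U}_d$ is a critical point of $\renergy$, so that $\grad \renergy(A) = 0$. Then the left-hand side of the displayed identity vanishes, forcing
\[
s(A) \, \renergy(A) = 0.
\]
Since $\C$ is a field, one of the two factors must be zero. If $\renergy(A) = 0$, then $[A,A^\ast] = 0$, i.e., $A$ is normal, and hence $A$ is a global minimum of $\renergy$. If instead $s(A) = 0$, then all eigenvalues of $A$ vanish, which is exactly the condition that $A$ is nilpotent. Therefore any critical point that is \emph{not} a global minimum must satisfy $s(A) = 0$; that is, it must be nilpotent.

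I do not anticipate any real obstacle here, since all of the heavy lifting was carried out in \Cref{prop:intrinsic gradient} and \Cref{lem:gradient dot product}. The only thing worth double-checking is the logical setup: the word ``non-minimizing'' should be interpreted as ``not a global minimum'' (equivalently, $\renergy(A) > 0$), which is the natural reading in this context and is exactly what makes the dichotomy above give the desired conclusion. Combined with \Cref{prop:non-nilpotency}, this proposition completes the two-step strategy outlined just after \Cref{thm:constrained gradient flow}: the gradient flow of a non-nilpotent initial point stays non-nilpotent in the limit, and any non-nilpotent critical point must be a global minimum, hence normal.
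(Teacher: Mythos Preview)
Your argument is correct, and it takes a genuinely different route from the paper's proof. The paper argues directly from the formula in \Cref{prop:intrinsic gradient}: at a non-minimizing critical point one has $A = -\frac{1}{\renergy(A)}[A,[A,A^\ast]]$, so $A = [A,B]$ with $B = -\frac{1}{\renergy(A)}[A,A^\ast]$; since $A$ trivially commutes with $[A,B] = A$, Jacobson's Lemma (\Cref{lem:jacobson}) forces $[A,B] = A$ to be nilpotent. Your approach instead reads the conclusion straight off the identity in \Cref{lem:gradient dot product}, sidestepping a second appeal to Jacobson's Lemma. This is slicker once \Cref{lem:gradient dot product} is in hand, and in fact shows that that lemma already packages both \Cref{prop:non-nilpotency} and \Cref{prop:critical points} as one-line corollaries. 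The paper's argument, by contrast, is independent of the auxiliary function $s$ and mirrors the unconstrained proof of \Cref{thm:critical points}, keeping the structural role of Jacobson's Lemma in the foreground; it also transfers verbatim to the balanced-energy setting (\Cref{prop:torus critical points}).
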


\begin{proof}
	By \Cref{prop:intrinsic gradient}, $A$ is a critical point of $\renergy$ if and only if
	\[
		0 = [A,[A,A^\ast]] + \renergy(A)A.
	\]
	If $A$ is a non-minimizing critical point, then $A$ is not normal, so $\renergy(A) \neq 0$ and
	\[
		A = -\frac{1}{\renergy(A)}[A,[A,A^\ast]].
	\]
	In other words, $A = [A,B]$ with $B = -\frac{1}{\renergy(A)}[A,A^\ast]$. But then $A$ certainly commutes with $[A,B]$, so Jacobson's Lemma (\Cref{lem:jacobson}) implies that $[A,B]$ is nilpotent. Since $A=[A,B]$, we conclude that $A$ is nilpotent.
\end{proof}

\begin{proof}[Proof of \Cref{thm:constrained gradient flow}]
	If $A_0 \in \mathcal{U}_d$ is not nilpotent, then the limit $A_\infty = \displaystyle\lim_{t \to \infty} \rflow(A_0,t)$ exists and, by \Cref{prop:non-nilpotency}, is not nilpotent. $A_\infty$ must be a critical point of $\renergy$ and, by \Cref{prop:critical points}, must be a global minimum, and hence normal. 
\end{proof}

It is possible to prove an analogous statement to \Cref{prop:distance bound} in this setting as well, so gradient descent of $\renergy$, even though it preserves norms and (when applicable) realness, produces a limiting normal matrix $A_\infty$ which is not much further from $A_0$ than the closest normal matrix. Again, this conclusion is supported by numerical experiments: see \Cref{fig:restricted distance comparison}

\begin{figure}[t]
	\centering
		\includegraphics[width=1.5in]{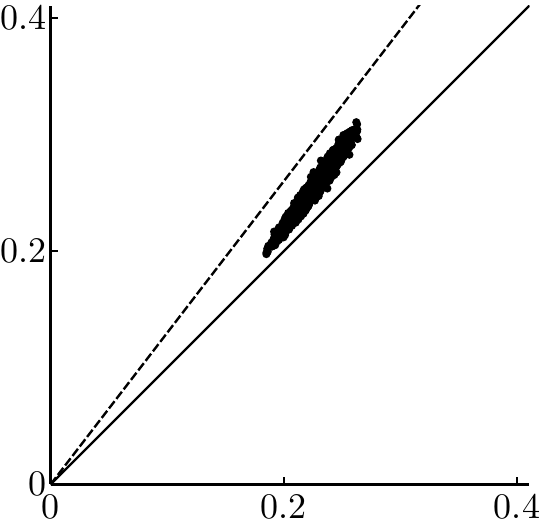} \qquad
		\includegraphics[width=1.5in]{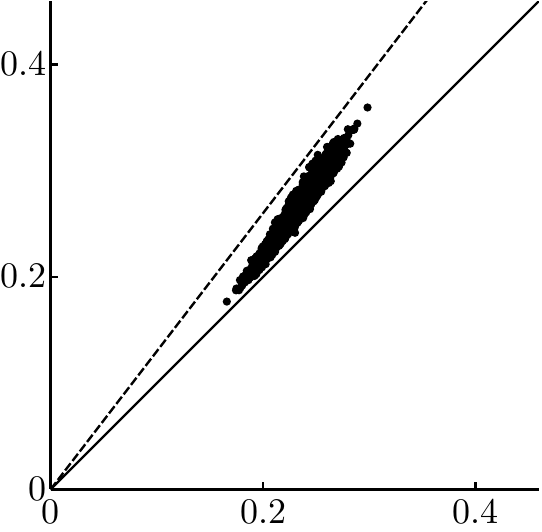} \qquad 
		\includegraphics[width=1.5in]{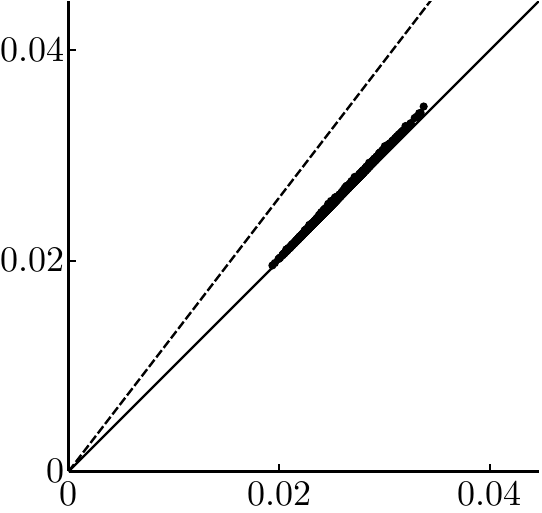}
	\caption{This is the same experimental setup as in \Cref{fig:distance comparison}, except that now $A_\infty = \displaystyle\lim_{t \to \infty} \rflow(A_0,t)$. Left: $A_0 \in \C^{20 \times 20}$; all $\frac{\|A_\infty - A_0\|^2}{\|\widehat{A}-A_0\|^2} \in [1.060,1.198]$. Center: $A_0 \in \R^{20 \times 20}$; all $\frac{\|A_\infty - A_0\|^2}{\|\widehat{A}-A_0\|^2} \in [1.046,1.253]$. Right: $A_0 \in \C^{20 \times 20}$ is a small perturbation of a normal matrix; all $\frac{\|A_\infty - A_0\|^2}{\|\widehat{A}-A_0\|^2} \in [1.010,1.031]$. In all three plots, the solid line has slope 1 and the dashed line has slope $1.3$. Code for these experiments is available on GitHub~\cite{normal-matrices-code}.}
	\label{fig:restricted distance comparison}
\end{figure}

\subsection{Topology of Unit Norm Normal Matrices}

The space of normal matrices $\normal$ is a cone in $\Cd$ and hence topologically trivial. However, the space $\unitnormal$ can potentially have interesting topology. Friedland~\cite{friedland2002normal} argues that $\unitnormal$ is irreducible and the quasi-variety of its smooth points is connected. However, this is not quite enough to imply that $\unitnormal$ is connected, since irreducible real varieties can have connected components consisting entirely of non-smooth points (see, e.g., \cite[Figure~2]{cahill_connectivity_2017}). In this subsection, we show that $\unitnormal$ is connected and, in fact, that many of its low-dimensional homotopy groups vanish.

The key fact that we use when studying the topology of $\unitnormal$ is that it is closely related to the topology of the space of all non-nilpotent matrices in $\C^{d \times d}$. For the rest of this subsection, we use $\mathcal{P}_d$ to denote the space of nilpotent matrices in $\C^{d \times d}$ and we let $\mathcal{M}_d = \C^{d \times d} \setminus \mathcal{P}_d$. The relationship between the topologies of $\unitnormal$ and $\mathcal{M}_d$ is made precise by the following result.

\begin{cor}\label{cor:unit_normal_deformation_retract}
    The space $\unitnormal$ is a strong deformation retract of $\mathcal{M}_d$.
\end{cor}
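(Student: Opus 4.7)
The plan is to exhibit the deformation retract in two composable stages: first radially rescale any non-nilpotent matrix to have unit Frobenius norm, then flow it to a unit-norm normal matrix via the gradient flow $\rflow$ of $\renergy$.

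First I would define $H_1 : \mathcal{M}_d \times [0,\tfrac{1}{2}] \to \mathcal{M}_d$ by
\[
H_1(A,s) = \frac{A}{(1-2s) + 2s\|A\|}.
\]
This is a straight-line interpolation along rays from the origin between $A$ (at $s=0$) and $A/\|A\|$ (at $s=\tfrac{1}{2}$). Positive rescaling preserves non-nilpotency, so $H_1$ lands in $\mathcal{M}_d$; and if $A \in \unitnormal$ then $\|A\| = 1$, so $H_1(A,s) = A$ throughout. Note also that the image $H_1(\mathcal{M}_d \times \{\tfrac12\})$ is precisely $\mathcal{M}_d \cap \mathcal{U}_d$, which is nonempty (every unit-norm normal matrix is non-nilpotent, since a normal matrix is nilpotent only when it is zero).

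Second, I would use the restricted gradient flow to define $H_2 : (\mathcal{M}_d \cap \mathcal{U}_d) \times [\tfrac{1}{2},1] \to \mathcal{M}_d \cap \mathcal{U}_d$ by
\[
H_2(A,s) = \begin{cases} \rflow\!\bigl(A,\, \tan(\pi(s-\tfrac12))\bigr) & s \in [\tfrac{1}{2},1), \\ \lim_{t\to\infty} \rflow(A,t) & s = 1. \end{cases}
\]
By \Cref{thm:constrained gradient flow} the value at $s=1$ is a well-defined matrix in $\unitnormal$, and by \Cref{prop:non-nilpotency} every intermediate value remains in $\mathcal{M}_d \cap \mathcal{U}_d$. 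If $A \in \unitnormal$, then $\grad \renergy(A) = 0$ by \Cref{prop:intrinsic gradient}, so $\rflow(A,t) \equiv A$, and $H_2$ fixes $A$. Concatenating, $H(A,s)$ equals $H_1(A,s)$ on $[0,\tfrac12]$ and $H_2(H_1(A,\tfrac12),s)$ on $[\tfrac12,1]$ gives the desired strong deformation retract, provided it is continuous.

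The main obstacle is joint continuity of $H_2$ at $s = 1$, i.e., continuous dependence of the gradient flow limit on the initial datum. I would handle this exactly as in the argument leading to \Cref{prop:distance bound}: the paper observes after \Cref{thm:constrained gradient flow} that $\renergy$ admits a Łojasiewicz inequality, and the same computation that yielded \Cref{prop:distance bound} gives a bound of the form $\|\rflow(A,t) - H_2(A,1)\| \leq C\, \renergy(\rflow(A,t))^{1/4}$ once the flow enters a small enough sublevel set. Given any $A_0 \in \mathcal{M}_d \cap \mathcal{U}_d$ and $\epsilon > 0$, pick $T$ so large that the tail estimate gives $\|\rflow(A_0,t) - H_2(A_0,1)\| < \epsilon$ for $t \ge T$; on a small enough compact neighborhood of $A_0$ the Łojasiewicz constants can be chosen uniformly, so continuity of $\rflow(\cdot, T)$ in the initial data (standard ODE dependence) combined with this uniform tail bound yields $\|H_2(A,1) - H_2(A_0,1)\| < 3\epsilon$ for $A$ near $A_0$. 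Continuity away from $s=1$ is standard, and $H$ fixes $\unitnormal$ pointwise by the observations above, completing the proof.
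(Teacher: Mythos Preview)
Your proof is correct but takes a genuinely different route from the paper. The paper first applies the \emph{unconstrained} gradient flow $\flow$ of $\energy$ on all of $\Cd$, invoking the general Duistermaat--Lerman result that the squared norm of a momentum map yields a strong deformation retract onto its zero set; by \Cref{cor:nilpotent_flow_to_zero} this restricts to a retract $\mathcal{M}_d \to \normal\setminus\{0\}$, and then one radially rescales $\normal\setminus\{0\}$ onto $\unitnormal$. You invert the order: rescale first, then flow with the \emph{constrained} flow $\rflow$ of $\renergy$, appealing to \Cref{thm:constrained gradient flow} and \Cref{prop:non-nilpotency}. The paper's route is shorter because the continuity of $A_0 \mapsto A_\infty$ is outsourced to Lerman's theorem; your route is more self-contained, since you establish continuity at $s=1$ directly from the Łojasiewicz tail estimate (this is exactly the argument Lerman uses, so you are essentially reproving a special case). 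One small imprecision: the Łojasiewicz exponent for $\renergy$ need not be $3/4$ as it was for $\energy$ in \Cref{prop:Lojasiewicz inequality}; the paper only asserts that \emph{some} exponent exists. Your argument goes through unchanged with any exponent in $(\tfrac12,1)$, so this does not affect correctness.
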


\begin{proof}
    As the function $\mu:A \mapsto \|[A,A^\ast]\|^2$ is the norm squared of a momentum map (\Cref{prop:momentum_map}), with set of critical points exactly equal to $\normal$ (\Cref{thm:critical points}), it follows by a more general result of Duistermaat (see the expository work of Lerman \cite{lerman_gradient_2005}) that gradient descent gives a strong deformation retract of $\C^{d \times d}$ onto $\normal$. One can also deduce this from the work above: by \Cref{thm:unconstrained gradient flow}, we have a well-defined function $\mathcal{F}:\C^{d \times d} \times [0,\infty] \to \normal$ induced by gradient descent, which obviously fixes $\normal$, and the arguments in \cite{lerman_gradient_2005} show that the map is continous. Moreover, this restricts to a strong deformation retract $\mathcal{M}_d \times [0,\infty] \to \normal \setminus \{0\}$, by \Cref{cor:nilpotent_flow_to_zero}. As $\normal \setminus \{0\}$ is a cone over $\unitnormal$, the former also strong deformation retracts onto the latter. Concatenating these two strong deformation retracts gives a strong deformation retract $\mathcal{M}_d \to \unitnormal$. 
\end{proof}

In particular, $\unitnormal$ is homotopy equivalent to $\mathcal{M}_d$, so our goal of characterizing the topology of the former space reduces to understanding that of the latter space. From such an understanding, we will deduce the main theorem of this subsection, stated below. In the following, we use $\pi_k(\mathcal{X},x_0)$ to denote the $k$th homotopy group of a space $\mathcal{X}$ with respect to a basepoint $x_0 \in \mathcal{X}$, and write $\pi_k(\mathcal{X})$ in the case that $\mathcal{X}$ is path connected (in which case the result is independent of basepoint, up to isomorphism)---we refer the reader to~\cite[Chapter 4]{hatcher2002algebraic} for basic terminology and properties. We say that $\mathcal{X}$ is $k$-connected if $\pi_k(\mathcal{X},x_0)$ is the trivial group.

\begin{thm}\label{thm:topology_unit_normal}
    The space $\unitnormal$ is $k$-connected for all $k \leq 2d-2$.
\end{thm}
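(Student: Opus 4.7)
The plan is to exploit \Cref{cor:unit_normal_deformation_retract}, which tells us that $\unitnormal$ is homotopy equivalent to $\mathcal{M}_d = \Cd \setminus \mathcal{P}_d$, so the theorem reduces to showing that $\mathcal{M}_d$ is $(2d-2)$-connected. Since $\Cd$ is contractible, this is a statement about the codimension of the nilpotent variety $\mathcal{P}_d$ inside ambient complex Euclidean space, which can be attacked by a standard general-position / transversality argument.

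First I would verify that $\mathcal{P}_d$ is a complex algebraic subvariety of $\Cd$ of complex codimension exactly $d$, i.e., real codimension $2d$. The cleanest elementary route is via the characteristic polynomial map $\chi:\Cd \to \C^d$ sending a matrix $A$ to the non-leading coefficients of $\det(\lambda I - A)$. The map $\chi$ is polynomial, surjective (every monic polynomial of degree $d$ is the characteristic polynomial of its companion matrix), and by the Cayley--Hamilton theorem $\mathcal{P}_d = \chi^{-1}(0)$. A fiber-dimension argument then gives $\dim_\C \mathcal{P}_d = d^2 - d$, recovering the classical computation of the dimension of the nilpotent cone in $\mathfrak{gl}_d$.

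The second and main step is the transversality argument. Given any $k \leq 2d - 2$ and any continuous $\phi:S^k \to \mathcal{M}_d$, use contractibility of $\Cd$ to extend $\phi$ to a continuous map $\Phi:D^{k+1} \to \Cd$. Smoothly approximate $\Phi$ rel boundary and then apply stratified transversality with respect to a Whitney stratification of $\mathcal{P}_d$ (for instance, the stratification by conjugation orbits of $\GL_d(\C)$, which are indexed by partitions of $d$ and are each smooth complex submanifolds of $\Cd$ of complex codimension at least $d$). This produces a perturbation $\widetilde{\Phi}$ transverse to every stratum. Each stratum has real codimension at least $2d$, while $\dim D^{k+1} = k + 1 \leq 2d - 1 < 2d$, so transversality forces $\widetilde{\Phi}^{-1}(\mathcal{P}_d) = \emptyset$. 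Hence $\widetilde{\Phi}$ takes values in $\mathcal{M}_d$ and is a null-homotopy of $\phi$ there, proving $\pi_k(\mathcal{M}_d) = 0$.

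The main obstacle is that $\mathcal{P}_d$ is singular, so the phrase ``transverse to a submanifold'' has to be interpreted carefully. The cleanest fix is the stratified transversality sketched above, which can be carried out directly on the orbit stratification or inductively by peeling off the smooth locus and noting that the singular locus has strictly smaller complex dimension (and therefore strictly larger codimension, only strengthening the count). Alternatively, one can bypass the issue entirely by invoking a Zariski--Lefschetz-type theorem that the inclusion of the complement of a complex codimension-$d$ subvariety into a smooth complex variety is $(2d-1)$-connected, applied to $\mathcal{P}_d \subset \Cd$.
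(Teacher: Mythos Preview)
Your proposal is correct and follows essentially the same approach as the paper: reduce to $\mathcal{M}_d$ via \Cref{cor:unit_normal_deformation_retract}, establish that the nilpotent cone $\mathcal{P}_d$ has real codimension $2d$, and conclude via a stratified transversality argument. The only cosmetic differences are that the paper obtains the dimension of $\mathcal{P}_d$ by citing standard nilpotent-cone results (\Cref{lem:nilpotent_variety}) and uses an abstract Whitney decomposition, whereas you sketch the dimension via the characteristic polynomial map and use the concrete orbit stratification by Jordan type; your packaging of transversality (extend to the disk and perturb) is also slightly more direct than the paper's general \Cref{lem:fundamental_group_complement}, but the content is the same.
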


\begin{remark}
    In particular, $\unitnormal$ is connected for all $d$. Moreover, $\unitnormal$ is simply connected (i.e. $\pi_1(\unitnormal)$ is also trivial) for all $d \geq 2$. 
\end{remark}

The proof will use two auxiliary topological results. The first follows from more general results on \emph{nilpotent cones}, which are classical. We use \cite{jantzen2004nilpotent} as a general reference and explain how to deduce this particular result from the general results therein. 

\begin{lem}\label{lem:nilpotent_variety}
    The space $\mathcal{P}_d$ of nilpotent matrices in $\C^{d \times d}$ is an irreducible variety of complex dimension $d(d-1)$.
\end{lem}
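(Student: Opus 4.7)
The plan is to realize $\mathcal{P}_d$ as the Zariski closure of a single $\GL_d(\C)$-conjugation orbit, which immediately gives irreducibility and turns the dimension count into a centralizer calculation. First, $\mathcal{P}_d$ is an algebraic subvariety of $\C^{d\times d}$: by the Cayley--Hamilton theorem, a matrix $A$ is nilpotent if and only if $A^d = 0$; equivalently, if and only if all non-leading coefficients of its characteristic polynomial vanish. Both characterizations give polynomial equations in the entries of $A$.

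Next, I would use Jordan canonical form to parametrize the $\GL_d(\C)$-conjugation orbits in $\mathcal{P}_d$ by partitions $\lambda \vdash d$ recording Jordan block sizes. Each orbit $\mathcal{O}_\lambda \cong \GL_d(\C)/Z_\lambda$ is irreducible of dimension $d^2 - \dim Z_\lambda$. A standard combinatorial calculation (see \cite[Section~1.3]{jantzen2004nilpotent}) gives $\dim Z_\lambda = \sum_j (\lambda'_j)^2$, where $\lambda'$ is the conjugate partition; this expression is uniquely minimized at the \emph{principal} partition $\lambda = (d)$, with minimum value $d$. Hence the principal orbit $\mathcal{O}_{(d)}$---the conjugacy class of a single size-$d$ Jordan block---has dimension $d(d-1)$, and every other orbit has strictly smaller dimension.

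Finally, I would argue $\mathcal{P}_d = \overline{\mathcal{O}_{(d)}}$, which yields both irreducibility (as the closure of an irreducible set) and the dimension $d(d-1)$. The key input is that $\mathcal{O}_{(d)}$ is open and dense in $\mathcal{P}_d$. One way to see this is via the Springer resolution $\mu: T^\ast \mathcal{B}_d \to \mathcal{P}_d$, where $\mathcal{B}_d$ is the variety of complete flags in $\C^d$: the source is smooth and irreducible of complex dimension $2 \binom{d}{2} = d(d-1)$, the map is surjective (every nilpotent matrix preserves some complete flag by Engel's theorem), and $\mu$ restricts to an isomorphism over $\mathcal{O}_{(d)}$. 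Thus $\mathcal{P}_d$ is the image of an irreducible variety of dimension $d(d-1)$ under a dominant morphism whose generic fibers are points, and the conclusion follows.

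The main obstacle is verifying the openness of the principal orbit, or equivalently the listed Springer resolution properties; these are classical but foundational, and I would cite them from \cite{jantzen2004nilpotent} rather than reprove them. Everything else reduces to the Cayley--Hamilton theorem, elementary orbit dimension formulas, and the Young-diagram computation of centralizer dimensions.
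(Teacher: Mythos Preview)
Your argument is correct, but it takes a more hands-on route than the paper. The paper's proof is essentially a two-line citation: it invokes the general theory of nilpotent cones for reductive groups, quoting \cite[Lemma~6.2]{jantzen2004nilpotent} for irreducibility and \cite[Theorem~6.4]{jantzen2004nilpotent} for the dimension formula $\dim \mathcal{P}_d = 2\dim(\text{strictly upper triangular matrices}) = d(d-1)$. You instead rebuild the result from the orbit stratification: you identify the principal nilpotent orbit, compute its dimension via the centralizer formula, and then use the Springer resolution to conclude that $\mathcal{P}_d$ is its closure. Your approach buys more structural insight (the orbit poset, the role of the regular nilpotent, the resolution of singularities) and is in some sense more self-contained, whereas the paper's approach is quicker and emphasizes that this is a special case of a uniform Lie-theoretic fact. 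One minor note: once you have the Springer resolution surjective from an irreducible source, irreducibility of $\mathcal{P}_d$ is immediate and you do not actually need the separate openness/density discussion of $\mathcal{O}_{(d)}$; the dimension then follows either from the generic-fiber statement you give or, more simply, from your orbit-dimension count together with irreducibility.
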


\begin{proof}
    We apply the general nilpotent cone theory to the Lie group of invertible matrices $\mathrm{GL}_d(\C)$, in which case the nilpotent cone is exactly $\mathcal{P}_d$. Then the fact that $\mathcal{P}_d$ is an irreducible variety is \cite[Lemma 6.2]{jantzen2004nilpotent}. By \cite[Theorem 6.4]{jantzen2004nilpotent}, the dimension of $\mathcal{P}_d$ is twice the dimension of the maximal unipotent subalgebra of the Lie algebra $\C^{d \times d}$, namely the subalgebra of strictly upper triangular matrices (i.e., with zeros on the diagonal). This subalgebra has complex dimension $1 + 2 + \cdots + (d-1) = \frac{1}{2}d(d-1)$.
\end{proof}

The following is a standard application of transversality (see \cite[Chapter 6]{lee2012smooth} and \cite[Chapter 3]{hirsch2012differential}). Special cases of the result appear in, e.g.,~\cite[Theorem 2.3]{godbillon1971elements} and~\cite[Theorem 1.1.4]{ebert2012lecture}. We give a proof sketch here for the sake of convenience.

\begin{lem}\label{lem:fundamental_group_complement}
    Let $\mathcal{X}$ be a connected smooth manifold and let $\mathcal{Y} \subset \mathcal{X}$ be a union of smooth submanifolds, $\mathcal{Y} = \mathcal{Y}_1 \cup \cdots \cup \mathcal{Y}_\ell$, such that each $\mathcal{Y}_j$ has codimension greater than or equal to $m$ in $\mathcal{X}$. Then $\pi_k(\mathcal{X} \setminus \mathcal{Y})$ is isomorphic to $\pi_k(\mathcal{X})$ for all $k \leq m-2$. 
\end{lem}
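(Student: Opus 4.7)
The plan is to show that the inclusion $\iota\colon\mathcal{X}\setminus\mathcal{Y}\hookrightarrow\mathcal{X}$ induces an isomorphism $\iota_\ast\colon\pi_k(\mathcal{X}\setminus\mathcal{Y},x_0)\to\pi_k(\mathcal{X},x_0)$ for every $k\leq m-2$ by the standard general position argument, treating surjectivity and injectivity separately.

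For surjectivity, represent a class in $\pi_k(\mathcal{X},x_0)$ by a based continuous map $f\colon S^k\to\mathcal{X}$. Apply Whitney approximation to replace $f$ by a smooth map in the same pointed homotopy class, then apply Thom's transversality theorem to homotope that smooth map slightly to a map $g$ that is transverse to each submanifold $\mathcal{Y}_j$ separately (one can do this $j$ by $j$, since there are only finitely many). Because $\dim S^k=k\leq m-2<m\leq\operatorname{codim}\mathcal{Y}_j$, transversality forces $g(S^k)\cap\mathcal{Y}_j=\varnothing$ for every $j$, so $g$ factors through $\mathcal{X}\setminus\mathcal{Y}$ and represents a preimage of $[f]$ under $\iota_\ast$.

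For injectivity, suppose $f\colon S^k\to\mathcal{X}\setminus\mathcal{Y}$ lies in the kernel of $\iota_\ast$, so that there exists a continuous nullhomotopy $F\colon D^{k+1}\to\mathcal{X}$ extending $f$ on $\partial D^{k+1}=S^k$. Because $f(S^k)\subset\mathcal{X}\setminus\mathcal{Y}$ and $\mathcal{X}\setminus\mathcal{Y}$ is open, an open collar neighborhood of $\partial D^{k+1}$ is already mapped into $\mathcal{X}\setminus\mathcal{Y}$ by $F$. The relative versions of Whitney approximation and Thom transversality then let us modify $F$ only on the interior, keeping the map fixed on a neighborhood of $\partial D^{k+1}$, so as to produce a smooth extension $F'$ of $f$ that is transverse to each $\mathcal{Y}_j$. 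Since $\dim D^{k+1}=k+1\leq m-1<m\leq\operatorname{codim}\mathcal{Y}_j$, transversality again forces $F'(D^{k+1})\cap\mathcal{Y}_j=\varnothing$, so $F'$ is a nullhomotopy of $f$ inside $\mathcal{X}\setminus\mathcal{Y}$.

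The only real subtlety is the \emph{relative} step in the injectivity argument, where one must keep the boundary values untouched while perturbing; this is the standard rel-boundary formulation of transversality and adds no substantive difficulty beyond the absolute case. Basepoints can be carried through without change: a sufficiently small perturbation of a map sending $x_0$ to $x_0$ can be modified by a further homotopy supported in a contractible neighborhood of $x_0$ so as to remain based, and this modification keeps us inside $\mathcal{X}\setminus\mathcal{Y}$ since the latter is open.
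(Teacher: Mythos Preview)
Your proposal is correct and follows essentially the same approach as the paper's proof: both show that the inclusion $\iota\colon\mathcal{X}\setminus\mathcal{Y}\hookrightarrow\mathcal{X}$ induces an isomorphism on $\pi_k$ by using Whitney approximation followed by transversality to push maps from $S^k$ (for surjectivity) and homotopies on a $(k+1)$-dimensional domain (for injectivity) off each $\mathcal{Y}_j$, invoking the rel-boundary version of transversality in the injectivity step. Your treatment is in fact slightly more explicit than the paper's about basepoints and about applying transversality to the $\mathcal{Y}_j$ one at a time; the only cosmetic difference is that you phrase injectivity via triviality of the kernel (nullhomotopies over $D^{k+1}$) whereas the paper phrases it via homotopies $S^k\times[0,1]\to\mathcal{X}$ between two maps, which amounts to the same transversality argument.
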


\begin{proof}
    We will show that the inclusion map $\iota: \mathcal{X} \setminus \mathcal{Y} \hookrightarrow \mathcal{X}$ induces a bijection between homotopy groups. 
    
    To establish surjectivity, we will show that any map $f:S^k \to \mathcal{X}$ is homotopic to a map $S^k \to \mathcal{X} \setminus \mathcal{Y}$. To do so, we apply the Whitney Approximation Theorem~\cite[Theorem 6.26]{lee2012smooth} to homotope $f$ to a smooth map. By the version of the corollary of the  Transversality Theorem given in~\cite[Theorem 2.5]{hirsch2012differential}, together with the argument in the proof of the Transversality Homotopy Theorem~\cite[Theorem 6.36]{lee2012smooth}, the resulting map is then homotopic to a map $S^k \to \mathcal{X}$ which is transverse to each submanifold $\mathcal{Y}_j$. By the codimensionality constraint, this is only possible if the image of $S^k$ is disjoint from each $\mathcal{Y}_j$. This shows that $f$ is homotopic to a map whose image is disjoint from $\mathcal{Y}$. 

    Next, we show that the map induced by $\iota$ is injective. That is, if maps $f_0,f_1:S^k \to \mathcal{X}$ are homotopic, and, without loss of generality (by the above), $f_0(S^k) \cap \mathcal{Y} = f_1(S^k) \cap \mathcal{Y} = \emptyset$, then they are homotopic in $\mathcal{X} \setminus \mathcal{Y}$. This is done by applying similar arguments to the above to the homotopy $f:S^k \times [0,1] \to \mathcal{X}$; in particular, this map may be homotoped without destroying transversality at the boundary  $S^k \times \{0,1\}$~\cite[Ch. 3, Theorem 2.1]{hirsch2012differential}.
\end{proof}

\begin{proof}[Proof of \Cref{thm:topology_unit_normal}]
    By \Cref{cor:unit_normal_deformation_retract}, it suffices to show that $\mathcal{M}_d$ is $k$-connected for all $2d-2$. By a theorem of Whitney, the algebraic variety $\mathcal{P}_d$ can be expressed as a disjoint union of smooth manifolds~\cite[Theorem 2]{whitney1957elementary}, and, by \Cref{lem:nilpotent_variety},  each of these has real codimension at least
    \[
    \mathrm{dim}(\C^{d \times d}) - \mathrm{dim}(\mathcal{P}_d) = 2d^2 - 2d(d-1) = 2d. 
    \]
    The theorem then follows from \Cref{lem:fundamental_group_complement}, since $\C^{d \times d}$ is $k$-connected for all $k$.
\end{proof}

\subsection{Topology of Real Unit Norm Normal Matrices}

Let $\unitnormal^\R$ denote the space of real, normal $d\times d$ matrices with Frobenius norm equal to one (so $\mathcal{UN}_{d}^\R \subset \unitnormal$). Adapting the arguments from the previous subsection, we will show the following.

\begin{thm}\label{thm:topology_unit_normal_real}
    The space $\mathcal{UN}_d^\R$ is $k$-connected for all $k \leq d-2$.
\end{thm}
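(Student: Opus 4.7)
The plan is to replicate the strategy used for \Cref{thm:topology_unit_normal} in the real category. Writing $\mathcal{P}_d^\R \coloneq \mathcal{P}_d \cap \R^{d\times d}$ and $\mathcal{M}_d^\R \coloneq \R^{d\times d} \setminus \mathcal{P}_d^\R$, I would first establish a real analog of \Cref{cor:unit_normal_deformation_retract}: the space $\mathcal{UN}_d^\R$ is a strong deformation retract of $\mathcal{M}_d^\R$. This is immediate from the complex argument, since \Cref{thm:unconstrained gradient flow} and \Cref{thm:constrained gradient flow} guarantee that both gradient flows preserve realness of entries, so the two deformation retracts used in \Cref{cor:unit_normal_deformation_retract} restrict to the real locus and concatenate exactly as before.

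Next I would compute the real dimension of $\mathcal{P}_d^\R$, aiming to show it equals $d(d-1)$ so that its real codimension in $\R^{d \times d}$ is $d$. Every real nilpotent matrix is conjugate over $\R$ to a strictly upper triangular matrix (its real Jordan form contains only zero blocks), so $\mathcal{P}_d^\R$ is the image of the real algebraic map $\GL_d(\R) \times \mathscr{N} \to \R^{d \times d}$ sending $(g,N) \mapsto gNg^{-1}$, where $\mathscr{N}$ is the $\binom{d}{2}$-dimensional space of strictly upper triangular real matrices. A stabilizer dimension count parallel to the proof of \cite[Theorem 6.4]{jantzen2004nilpotent} then yields $\dim_\R \mathcal{P}_d^\R = d(d-1)$. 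Whitney's theorem~\cite{whitney1957elementary} stratifies $\mathcal{P}_d^\R$ as a disjoint union of smooth real submanifolds of codimension at least $d$ in $\R^{d\times d}$, and applying \Cref{lem:fundamental_group_complement} with $m=d$ shows that $\mathcal{M}_d^\R$ is $k$-connected for all $k \leq d - 2$. Combined with the first step, this proves the theorem.

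The main technical subtlety is the dimension computation for $\mathcal{P}_d^\R$. A priori, a real algebraic variety can have smaller real dimension than the complex dimension of its complexification (compare the pathological $\{x^2 + y^2 = 0\}$), so one must verify that this pathology does not occur here. It does not, essentially because the real Jordan form of a real nilpotent matrix uses only zero blocks---no complex conjugate pairs of nonzero eigenvalues intrude---which is precisely what makes the $\GL_d(\R)$-conjugation parameterization of $\mathcal{P}_d^\R$ by $\mathscr{N}$ available over $\R$. Everything else is a direct transcription of the complex arguments.
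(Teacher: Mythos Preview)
Your plan is correct and matches the paper's overall strategy: reduce to showing $\mathcal{M}_d^\R$ is $(d-2)$-connected via the realness-preserving gradient flow, stratify $\mathcal{P}_d^\R$ into smooth submanifolds of codimension at least $d$, and apply \Cref{lem:fundamental_group_complement}. The difference lies only in how the codimension bound on $\mathcal{P}_d^\R$ is established. You propose invoking Whitney's theorem on the real variety $\mathcal{P}_d^\R$ after computing $\dim_\R \mathcal{P}_d^\R = d(d-1)$ by a stabilizer count; the paper instead appeals to the stratification for real reductive group actions from \cite{heinzner2008stratifications}, together with the identification in \cite{Bohm_Lafuente_2020} of the strata of the nilpotent locus with $\GL_d(\R)$-conjugacy classes indexed by Jordan type, and then bounds each orbit's dimension by the same stabilizer computation. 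Your route avoids the real reductive machinery at the price of a less explicit stratification; the paper's route makes each stratum an honest orbit $\GL_d(\R)/\mathrm{stab}(J)$, so its dimension is read off directly without having to address the Zariski-versus-real pathology you flag. Either way the crux is the same elementary fact: the centralizer of the single $d\times d$ nilpotent Jordan block is the $d$-dimensional space of upper-triangular Toeplitz matrices, and this is the minimal centralizer dimension among nilpotent Jordan types.
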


\begin{figure}[t]
    \centering
    \includegraphics[width=4in]{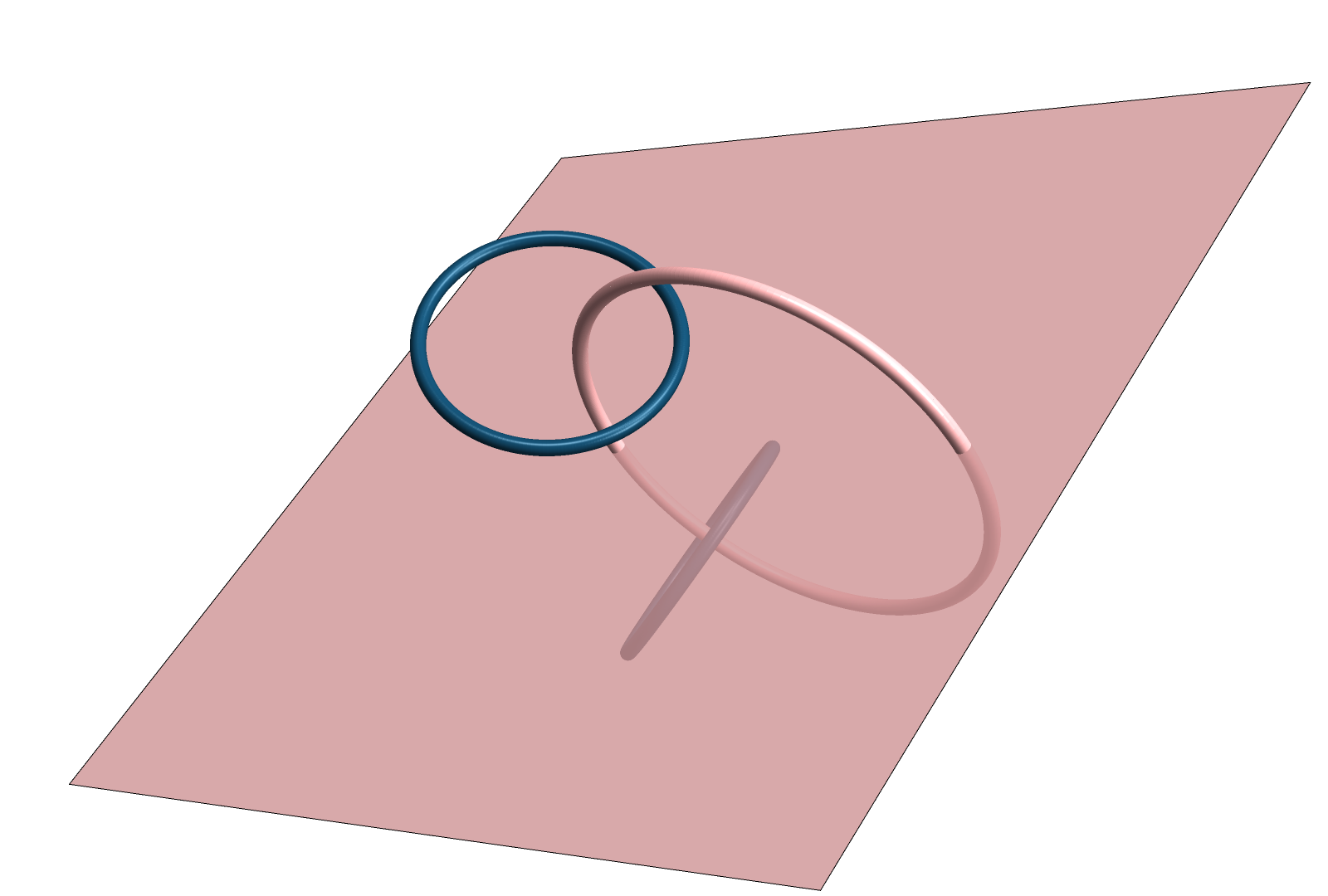}
    \caption{Consider the space $\mathcal{U}_2^\R$ of $2 \times 2$ real matrices with Frobenius norm 1. Since $\mathcal{U}_2^\R$ is a copy of the 3-sphere, we can stereographically project to $\R^3$. The image under this projection of the unit-norm nilpotent matrices is shown in blue, and the image of $\mathcal{UN}_2^\R$ is shown in pink. Specifically, the pink plane (which is the $y=z$ plane) is the image of the symmetric matrices and the pink loop is the image of the normal matrices of the form $\begin{bmatrix}a & b \\ -b & a\end{bmatrix}$.}
    \label{fig:2x2picture}
\end{figure}

\begin{remark}
    It follows from the theorem that $\mathcal{UN}_d^\R$ is path connected for $d \geq 2$ and simply connected for $d \geq 3$. These results are tight:
    \begin{itemize}
        \item $\mathcal{UN}_1^\R \approx \{\pm 1\}$ is not path connected. 
        \item $\mathcal{UN}_2^\R$ is not simply connected. This is illustrated in \Cref{fig:2x2picture}. 
    \end{itemize}
\end{remark}

The proof of the theorem follows the same general steps as that of \Cref{thm:topology_unit_normal}. Let $\mathcal{P}_d^\R$ denote the $d \times d$ real nilpotent matrices, and let $\mathcal{M}_d^\R = \R^{d \times d} \setminus \mathcal{P}_d^\R$ denote the set of non-nilpotent matrices. By the same arguments used in the previous subsection, $\mathcal{M}_d^\R$ deformation retracts onto $\mathcal{UN}_d^\R$, so it suffices to prove that $\mathcal{M}_d^\R$ is $k$-connected for all $k \leq d-2$. 

The main difference in the real case is that an analogue of \Cref{lem:nilpotent_variety} does not follow from general facts of nilpotent cones described in~\cite{jantzen2004nilpotent}, as the results therein are valid over algebraically closed fields. We obtain a decomposition of $\mathcal{P}_d^\R$ in analogy with the Whitney decomposition used in the proof of \Cref{thm:topology_unit_normal} from results of~\cite{heinzner2008stratifications} and~\cite{Bohm_Lafuente_2020}.

\begin{lem}\label{lem:nilpotent_cone_structure_real}
    The set of nilpotent matrices $\mathcal{P}_d^\R$ is a union of smooth submanifolds of $\R^{d \times d}$, each of which has codimension at least $d$. 
\end{lem}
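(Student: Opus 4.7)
My plan is to decompose $\mathcal{P}_d^\R$ into orbits of the conjugation action of $\GL_d(\R)$ on $\R^{d \times d}$ and then show each orbit is an embedded smooth submanifold of codimension at least $d$.

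First, since every real nilpotent matrix has all eigenvalues equal to zero, its Jordan canonical form is automatically real, and two real nilpotent matrices are $\GL_d(\R)$-conjugate if and only if they have the same Jordan block structure. Consequently,
\[
\mathcal{P}_d^\R \;=\; \bigsqcup_{\lambda} \mathcal{O}_\lambda,
\]
where $\lambda$ ranges over partitions of $d$ and $\mathcal{O}_\lambda$ is the set of real nilpotent matrices whose Jordan blocks have sizes $\lambda_1 \geq \lambda_2 \geq \cdots$. Each $\mathcal{O}_\lambda$ is an orbit of an algebraic action of the algebraic group $\GL_d(\R)$, hence is locally closed in the Zariski (and therefore analytic) topology, and so is an embedded smooth submanifold of $\R^{d \times d}$.

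Second, I would compute the codimension of $\mathcal{O}_\lambda$. By the orbit-stabilizer principle, this codimension equals the real dimension of the centralizer in $\mathfrak{gl}_d(\R)$ of a representative nilpotent matrix $N_\lambda$, and a classical formula identifies this dimension as $\sum_j \mu_j^2$, where $\mu$ is the partition conjugate to $\lambda$. Since each $\mu_j$ is a positive integer we have $\mu_j^2 \geq \mu_j$, so
\[
\sum_j \mu_j^2 \;\geq\; \sum_j \mu_j \;=\; d,
\]
with equality exactly when every $\mu_j$ equals $1$, i.e., when $\lambda = (d)$ corresponds to the regular (single-block) nilpotent orbit. This yields the required codimension bound.

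The main subtlety is verifying that the standard facts I am invoking -- the centralizer dimension formula and the embedded submanifold property -- transfer cleanly from the typical algebraically-closed-field setting to the real setting; both are classical and the proofs adapt verbatim, but care is needed. An alternative, in keeping with the citations in the statement, is to extract the decomposition directly from the Kirwan--Ness-type stratifications of the unstable locus for real reductive group actions developed in~\cite{heinzner2008stratifications} and~\cite{Bohm_Lafuente_2020}; this would give smooth strata of codimension at least $d$ without having to enumerate the Jordan types explicitly. For the elementary nature of our application, however, the direct orbit argument is the cleanest route.
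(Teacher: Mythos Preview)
Your proposal is correct and follows the same overall skeleton as the paper's proof: decompose $\mathcal{P}_d^\R$ into $\GL_d(\R)$-conjugation orbits indexed by Jordan types, then bound each orbit's codimension by bounding the centralizer dimension from below. The execution differs in two places. First, for the submanifold structure the paper invokes the real Kirwan--Ness stratification of~\cite{heinzner2008stratifications} and its identification with Jordan orbits in~\cite{Bohm_Lafuente_2020}, whereas you argue directly that algebraic group orbits are locally closed and hence embedded; your route is more elementary and self-contained. Second, for the codimension bound the paper only explicitly computes the centralizer of the single regular Jordan block (upper-triangular Toeplitz matrices, dimension $d$) and asserts without justification that this is the orbit of smallest codimension; your use of the classical formula $\dim Z(N_\lambda)=\sum_j \mu_j^2 \geq \sum_j \mu_j = d$ handles all orbits uniformly and actually fills that gap. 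Either argument suffices for the application.
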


\begin{proof}
    It follows from a general theory of real reductive Lie group actions developed in~\cite{heinzner2008stratifications} that $\R^{d \times d} \setminus \{0\}$ decomposes as a union of $\mathrm{GL}_d(\R)$-invariant (with respect to the conjugation action) smooth submanifolds $S_0 \cup S_1 \cup \cdots \cup S_k$, where $S_0$ is exactly the open submanifold $\mathcal{M}_d^\R$---see also~\cite[Section 1]{Bohm_Lafuente_2020} for a short exposition of these ideas. It is shown in~\cite[Section 1.2]{Bohm_Lafuente_2020} that (for the specific example of the conjugation action on $\R^{d \times d}$) the remaining submanifolds $S_i$, $i > 0$, are parameterized by Jordan canonical forms of nilpotent matrices. That is, fixing such a Jordan matrix $J$, we consider the corresponding set of nilpotent matrices as the homogeneous space $\mathrm{GL}_d(\R)/\mathrm{stab}(J)$, where $\mathrm{stab}(J)$ is the stabilizer of $J$ under the conjugation action. To complete the proof, it suffices to show that the dimension of such a homogeneous space is at most $d^2-d$, i.e., to show that the stabilizer of any such Jordan matrix is at least of dimension~$d$.

    Let us now establish the claim made above. A nilpotent Jordan matrix $J$ necessarily has all zeros on its diagonal, and is therefore characterized by the pattern of ones in the super diagonal (i.e., by the size of its Jordan blocks). An invertible real matrix $A = (a_{ij})_{i,j=1}^d$ lies in the stabilizer of $J$ if and only if $AJ = JA$. This matrix equation gives several constraints in the entries of $A$, and the number of independent constraints determines the dimension of the stabilizer. 
    
    In particular, since we aim to determine a lower bound on codimension, it suffices to consider the Jordan matrix which produces the largest number of constraints: that is, when $J$ is the matrix whose superdiagonal consists of all ones (i.e., it has a single Jordan block). It is a standard fact (see, e.g., \cite[Theorem~9.1.1]{gohberg_invariant_2006}) that, for this $J$, solutions of the equation $AJ = JA$ must be upper triangular Toeplitz matrices. In other words, elements of $\mathrm{stab}(J)$ are of the form
    \[
        \begin{bmatrix} a_1 & a_2 & a_3 & \cdots & a_{d-1} & a_d \\
        0 & a_1 & a_2 & \cdots & a_{d-2} & a_{d-1} \\
        0 & 0 & a_1 & \cdots & a_{d-3} & a_{d-2} \\
        \vdots & \vdots & \vdots & \ddots & \vdots & \vdots \\
        0 & 0 & 0 & \cdots & a_1 & a_2 \\
        0 & 0 & 0 & \cdots & 0 & a_1 \end{bmatrix}.
    \]
    Clearly, then, $\dim (\mathrm{stab}(J)) = d$.
    This implies that the codimension of the associated submanifold is $d$. Since this is the submanifold of smallest codimension, this completes the proof.
\end{proof}

\begin{proof}[Proof of \Cref{thm:topology_unit_normal_real}]
    By the discussion above, it suffices to prove that $\mathcal{M}_d^\R$ is $k$-connected for all $k \leq d-2$. In light of \Cref{lem:nilpotent_cone_structure_real}, the same transversality argument as was used in the proof of \Cref{thm:topology_unit_normal} can be used here.
\end{proof} 

\section{Balanced Matrices and Weighted Digraphs}
\label{sec:graphs}

As was described in the introduction, the techniques and results that we have developed for normal matrices can be adapted to the setting of weighted digraphs. The naturality of such an application follows from the following observation. Notice that the diagonal entries of $\mu(A) = [A,A^\ast]$ are of the form $\|A_i\|^2 - \|A^i\|^2$, where $A_i$ is the $i$th row of $A$ and $A^i$ is the $i$th column. Hence, if $A \in \Cd$ is normal, then $\|A_i\|^2 = \|A^i\|^2$ for all $i=1, \dots , d$. This suggests a certain \emph{balancing condition}, as we expand on below.

Suppose that $\graphG$ is a weighted, directed graph and $\widehat{A}$ is its associated adjacency matrix; that is, the $(i,j)$ entry of $\widehat{A}$ is the (non-negative) weight of the directed edge from vertex $i$ to vertex $j$ if such an edge exists, and zero if there is no such edge. In particular, the entries of $\widehat{A}$ are non-negative real numbers. If $A$ is the matrix whose entries are the square roots of the entries of $\widehat{A}$, then $\|A_i\|^2 = \|A^i\|^2$ says that the $i$th vertex $v_i$ of $\graphG$ is \emph{balanced}: the sum of the weights of the edges coming into $v_i$ equals the sum of the weights of the edges leaving $v_i$. In other words, every real normal matrix $A$ corresponds to a balanced, weighted, directed (multi-)graph\footnote{If $A$ has nonzero entries on its diagonal, the resulting graph will have loop edges at the corresponding vertices.} $\graphG$ by interpreting the component-wise square of $A$ as the adjacency matrix of $\graphG$. Moreover, the gradient descent procedures described in the previous sections give ways of balancing a given weighted, directed graph.

However, balancing a graph by gradient descent of $\energy$ or $\renergy$ has some undesirable features. First, the condition that $A$ is normal is stronger than the condition that $\graphG$ is balanced;\footnote{For example, in the case when all weights are 1, normality of $A$ implies that every pair of vertices (not necessarily distinct) has the same number of common out-neighbors as common in-neighbors.} second, the gradient flow is not guaranteed to ensure that a zero entry in the adjacency matrix will stay zero, so the limiting balanced graph may have sprouted new edges (and even loop edges) not present in the initial graph.

For applications to balancing graphs, then, the natural energy to consider is not the non-normal energy $\energy$, but rather the \emph{unbalanced energy} $\benergy: \Cd \to \R$ defined by
\[
	\benergy(A) = \|\diag(AA^\ast - A^\ast A)\|^2 = \sum_{i=1}^d \left(\|A_i\|^2 - \|A^i\|^2\right)^2,
\]
where we use $A_i$ for the $i$th row of $A$ and $A^i$ for the $i$th column. We will say that $A$ is \emph{balanced} if $\benergy(A) = 0$.

We now describe this function from the perspective of symplectic geometry and GIT. Following a general theme of the paper, these observations are not really essential in what follows, but they provided inspiration, especially in light of Kirwan's fundamental work~\cite{kirwan_cohomology_1984}. Let $\DSU(d)$ be the subgroup of $\SU(d)$ consisting of diagonal matrices. Then $\DSU(d) \approx \unitary(1)^{d-1}$ is the standard maximal torus of $\SU(d)$. The restriction of the conjugation action of $\SU(d)$ on $\Cd$ gives a Hamiltonian action of $\DSU(d)$ on $\Cd$ with momentum map $\mu_\Delta: \Cd \to \R^d$ given by composing the momentum map $\mu$ of the $\SU(d)$ action with orthogonal projection to $\mathfrak{dsu}(d)^\ast \subset \mathfrak{su}(d)^\ast$ (see, e.g.,~\cite[Proposition~III.1.10]{audin_torus_2004}). Under the identification of $\mathfrak{su}(d)^\ast$ with the traceless Hermitian matrices, $\mathfrak{dsu}(d)^\ast$ corresponds to the traceless, diagonal, real matrices, so we have
\[
	\mu_\Delta(A) = \diag(\mu(A)) = \diag([A,A^\ast])
\]
and $\benergy(A) = \|\mu_\Delta(A)\|^2$. The GIT version of the foregoing is that the diagonal subgroup $\DSL_d(\C) \subset \SL_d(\C)$ has an algebraic action by conjugation on $\C^{d \times d}$ (or, in \Cref{sec:preserving graph weights}, on $\mathbb{P}(\C^{d \times d})$).

\subsection{Balancing Matrices by Gradient Descent}

As in the case of $\energy$, all critical points of $\benergy$ are global minima:

\begin{thm}\label{thm:torus critical points}
	The only critical points of $\benergy$ are the global minima; that is, the balanced matrices.
\end{thm}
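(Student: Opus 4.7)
The plan is to mimic the strategy used for \Cref{thm:critical points}: first identify the gradient of $\benergy$ in terms of a commutator, then show that the resulting critical-point equation forces $\diag([A,A^\ast]) = 0$.

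First I would compute $\nabla \benergy$. Writing $\benergy = N \circ \mu_\Delta$ with $\mu_\Delta(A) = \diag([A,A^\ast])$ and $N(C) = \|C\|^2$, and using $\nabla N(C) = 2C$, the chain-rule argument from the proof of \Cref{thm:critical points} gives $\nabla \benergy(A) = 2\, D\mu_\Delta(A)^\vee(\mu_\Delta(A))$. Since $\mu_\Delta = \diag \circ \mu$ and $\mu_\Delta(A)$ is already diagonal, the orthogonal projection $\diag$ is self-adjoint and one can replace $D\mu_\Delta(A)^\vee$ by $D\mu(A)^\vee$ acting on the diagonal matrix $\mu_\Delta(A)$. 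Applying the formula \eqref{eqn:adjoint_formula} together with the fact that $\mu_\Delta(A)$ is real (hence Hermitian), this yields
\[
    \nabla \benergy(A) = -4[A, \diag([A,A^\ast])].
\]

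Next I would analyze the critical-point equation. Setting $D = \diag([A,A^\ast])$, whose $i$th diagonal entry is $d_i = \|A_i\|^2 - \|A^i\|^2$, critical points of $\benergy$ satisfy $[A,D] = 0$. Because $D$ is diagonal, the matrix equation $AD = DA$ decouples into the scalar conditions $(d_j - d_i)\, A_{ij} = 0$ for every pair $(i,j)$. In particular, whenever $A_{ij} \neq 0$ we must have $d_i = d_j$. Form the undirected graph on $\{1,\ldots,d\}$ in which $i \sim j$ iff $A_{ij} \neq 0$ or $A_{ji} \neq 0$; then the function $i \mapsto d_i$ is constant on each connected component of this graph.

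The final step, which I expect to be the main (but still mild) obstacle, is to rule out a nonzero common value on a component. Fix a connected component $C$ and let $d_C$ be the shared value of $d_i$ for $i \in C$. By construction, $A_{ij} = A_{ji} = 0$ whenever $i \in C$ and $j \notin C$, so
\[
    |C| \cdot d_C = \sum_{i \in C} d_i = \sum_{i \in C}\sum_{j \in C}\bigl(|A_{ij}|^2 - |A_{ji}|^2\bigr) = 0,
\]
since the double sum is symmetric in $(i,j) \in C \times C$. Hence $d_C = 0$ for every component, so $D = 0$ and $A$ is balanced. The argument is cleaner than the $\energy$ case because $D$ being diagonal lets the support-graph decomposition replace Jacobson's Lemma; the only thing to be careful about is that all entries $A_{ij}$ across components genuinely vanish, which is what allows the off-diagonal contributions to cancel pairwise.
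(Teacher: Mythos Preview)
Your proof is correct and follows essentially the same route as the paper: compute $\nabla\benergy(A)=-4[A,\diag([A,A^\ast])]$, read off the entrywise condition $(d_j-d_i)A_{ij}=0$, and then use that the quantities $d_i=\|A_i\|^2-\|A^i\|^2$ sum to zero to force them all to vanish. The only difference is that you carry out the last step component-by-component on the support graph, whereas the paper passes directly from ``$d_i=d_j$ whenever $a_{ij}\neq 0$'' to ``$d_i$ is independent of $i$'' and then uses the global zero-sum; your version is slightly more careful, but it is the same argument.
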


\begin{proof}
	We first show that the gradient of the balanced energy is given by
	\begin{equation}\label{eq:grad benergy}
		\nabla \benergy(A) = -4[A,\diag([A,A^\ast])].
	\end{equation}
        We write $\benergy = N \circ \mathrm{diag} \circ \mu$, where $\mu$ is the momentum map \eqref{eqn:momentum_map}, we consider the diagonalization operator as a linear map $\mathrm{diag}:\C^{d \times d} \to \C^{d \times d}$, and $N$ is the norm-squared map, as in the proof of \Cref{thm:critical points}. Following the logic of that proof, we then have that
        \[
        \nabla \benergy(A) = D\mu(A)^\vee \mathrm{diag}^\vee \nabla N (\mathrm{diag} \circ \mu(A)),
        \]
        where the superscripts once again denote adjoints with respect to $\langle \cdot, \cdot \rangle$. It is not hard to show that the map $\mathrm{diag}$ is self-adjoint and idempotent. Then
        \begin{multline*}
            \nabla \benergy(A) = D\mu(A)^\vee \mathrm{diag} (2 \cdot \mathrm{diag} \circ \mu(A)) = 2 D\mu(A)^\vee \mathrm{diag}(\mu(A)) \\
            = 2 [\mathrm{diag}(\mu(A)) + \mathrm{diag}(\mu(A))^\ast, A] = -4 [A, \mathrm{diag}([A,A^\ast])].
        \end{multline*}

	The above shows that we have a critical point of $\benergy$ exactly when
	\[
		0 = [A,\diag([A,A^\ast])].
	\]
	Since the entries of $[A,\diag([A,A^\ast])]$ are of the form 
	\begin{equation}\label{eq:grad benergy entries}
		a_{ij}\left((\|A_i\|^2 - \|A^i\|^2) - (\|A_j\|^2 - \|A^j\|^2)\right),
	\end{equation}
	this means that $\|A_i\|^2 - \|A^i\|^2 = \|A_j\|^2 - \|A^j\|^2$ for all $i$ and $j$ such that $a_{ij} \neq 0$. 
	
	In other words, $A$ is a critical point of $\benergy$ if and only if $\|A_i\|^2 - \|A^i\|^2$ is independent of $i$. However, since
	\[
		\sum_{i=1}^d \left(\|A_i\|^2 - \|A^i\|^2\right) = \sum_{i=1}^d\|A_i\|^2 - \sum_{i=1}^d\|A^i\|^2 = \|A\|^2 - \|A\|^2 = 0,
	\]
	this can only happen if all $\|A_i\|^2 - \|A^i\|^2 = 0$; that is, if $A$ is balanced.
\end{proof}

\begin{remark}
	\Cref{thm:torus critical points} shows that $\benergy$ is an invex function, but it is not quasiconvex. To see this, consider the matrices
	\[
		A_0 = \begin{bmatrix} 0 & 1 & 0 \\ 0 & 0 & 1 \\ 1 & 0 & 0 \end{bmatrix} \qquad \text{and} \qquad A_1 = \begin{bmatrix} 0 & 0 & 0 \\ 0 & 0 & 1 \\ 0 & 1 & 0 \end{bmatrix},
	\] 
	which are the (entrywise square roots of the) adjacency matrices of the balanced graphs
	\[
		\begin{tikzpicture}[baseline=(current bounding box.base),scale=.8]
			\node[vertex,label={[label distance=0pt]90:$1$}] (1) at (0,1.71){};
			\node[vertex,label={[label distance=-3pt]235:$2$}] (2) at (-1,0){};
			\node[vertex,label={[label distance=-3pt]315:$3$}] (3) at (1,0){};
			\draw[edge,line width=1pt] (1) to (2);
			\draw[edge,line width=1pt] (2) to (3);
			\draw[edge,line width=1pt] (3) to (1);
		\end{tikzpicture}
		\qquad \text{and} \qquad
		\begin{tikzpicture}[baseline=(current bounding box.base),scale=.8]
			\node[vertex,label={[label distance=0pt]90:$1$}] (1) at (0,1.71){};
			\node[vertex,label={[label distance=-3pt]235:$2$}] (2) at (-1,0){};
			\node[vertex,label={[label distance=-3pt]315:$3$}] (3) at (1,0){};
			\draw[edge,line width=1pt] (2) to[bend left] (3);
			\draw[edge,line width=1pt] (3) to[bend left] (2);
		\end{tikzpicture}\, ,
	\]
	respectively. Then $\benergy(A_0) = 0 = \benergy(A_1)$, but $\benergy\left((1-t)A_0 + t A_1\right) = 8t^2(1-t)^2 > 0$ for all $0 < t < 1$.
\end{remark}

As in the case of $\energy$, we can find global minima of $\benergy$ by gradient descent. Specifically, let $\bflow: \Cd \times [0,\infty) \to \Cd$ be the negative gradient flow of $\benergy$:
\[
	\bflow(A_0,0) = A_0 \qquad \frac{d}{dt} \bflow(A_0, t) = -\nabla \benergy(\bflow(A_0,t)).
\]
Since $\benergy$ is a real polynomial function on $\Cd$, \Cref{thm:torus critical points} implies that $\lim_{t \to \infty} \bflow(A_0,t)$ is always well-defined and normal. Since the real matrices stay real under gradient flow, this limit will be real whenever $A_0$ is.

Moreover,
\begin{equation}\label{eq:torus gradient in orbit}
	\nabla \benergy(A) = -4[A,\diag([A,A^\ast])] = 4\left(\left. \frac{d}{d\epsilon}\right|_{\epsilon = 0} e^{\epsilon \diag([A,A^\ast])} \cdot A\right)
\end{equation}
is tangent to the orbit of the diagonal subgroup $\DSL_d(\C) \leq \SL_d(\C)$ acting by conjugation on $\Cd$. In particular, flowing $A_0$ by the gradient flow of $\benergy$ preserves not just the eigenvalues of $A_0$, but also all principal minors of $A_0$, including the diagonal entries of $A_0$.

From the expression \eqref{eq:grad benergy entries} for the entries of $-\frac{1}{4}\nabla \benergy(A)$ we see that, if there is $t_0 \geq 0$ so that the $(i,j)$ entry in $\bflow(A_0,t_0)$ vanishes, then the $(i,j)$ entry of $\bflow(A_0, t)$ will vanish for all $t \geq t_0$. In graph terms, the gradient flow of $\benergy$ cannot sprout new edges in the graph. This also means that if $A_0$ is real, its entries cannot change sign under gradient descent of $\benergy$. Thus, we have proved:

\begin{thm}\label{thm:benergy flow}
	For any $A_0 \in \Cd$, the matrix $A_\infty \coloneq \displaystyle\lim_{t \to \infty}\bflow(A_0,t)$ exists, is balanced, has the same eigenvalues and principal minors as $A_0$, and has zero entries wherever $A_0$ does. If $A_0$ is real, then so is $A_\infty$, and if $A_0$ has all non-negative entries, then so does $A_\infty$.
\end{thm}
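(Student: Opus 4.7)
The plan is to assemble the theorem from observations made in the preceding discussion together with a few short supplementary arguments. Existence of $A_\infty$ follows because $\benergy$ is a real polynomial on the real vector space $\Cd$, so the \L{}ojasiewicz gradient inequality~\cite{lojasiewicz_sur_1984} guarantees that the gradient flow has a unique limit point; by \Cref{thm:torus critical points} this limit must be a global minimum of $\benergy$, i.e., a balanced matrix.

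For preservation of eigenvalues and principal minors, the key is \eqref{eq:torus gradient in orbit}: $-\nabla \benergy(A)$ is tangent to the orbit of $A$ under conjugation by the diagonal subgroup $\DSL_d(\C) \leq \SL_d(\C)$, so $\bflow(A_0,t)$ stays in this orbit for all finite $t$. Continuity of eigenvalues and of the principal minor functions then passes these invariants to $A_\infty$. To see that all principal minors (not just the determinant) are preserved by conjugation by a diagonal matrix $D = \diag(d_1,\dots,d_d)$, note that this operation sends $a_{ij}$ to $(d_i/d_j)a_{ij}$, so every principal submatrix is conjugated by the corresponding diagonal submatrix and keeps its determinant. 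The same calculation shows that the zero pattern of $A$ is preserved by the orbit action, so the $(i,j)$ entry of $\bflow(A_0,t)$ vanishes for all $t$ whenever it vanishes at $t=0$; this passes to $A_\infty$ by continuity. (Alternatively, \eqref{eq:grad benergy entries} shows that the $(i,j)$ entry of $-\frac{1}{4}\nabla\benergy(A)$ is proportional to $a_{ij}$, so zero entries are stationary under the ODE, as already observed in the discussion preceding the theorem.)

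The realness statement is immediate from \eqref{eq:grad benergy}: if $A$ is real then $\diag([A,A^\ast])$ is a real diagonal matrix and $[A,\diag([A,A^\ast])]$ is real, so the real matrices form an invariant submanifold of the flow, and $A_\infty$ is real by continuity whenever $A_0$ is. For non-negativity, suppose $A_0$ is real with all entries non-negative and set $\phi_{ij}(t) \coloneq (\bflow(A_0,t))_{ij}$, a continuous real-valued function. If $\phi_{ij}(t^\ast) < 0$ for some $t^\ast > 0$, then the intermediate value theorem furnishes $t_0 \in [0,t^\ast)$ with $\phi_{ij}(t_0) = 0$; but zero-entry preservation applied to the flow starting at $\bflow(A_0,t_0)$ forces $\phi_{ij}(t) = 0$ for all $t \geq t_0$, contradicting $\phi_{ij}(t^\ast) < 0$. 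Continuity then passes non-negativity to $A_\infty$. The only mildly delicate ingredient in this plan is the \L{}ojasiewicz-based convergence, but since $\benergy$ is a real polynomial whose only critical points are global minima this is routine, and the rest of the proof amounts to careful bookkeeping of the orbit and continuity arguments.
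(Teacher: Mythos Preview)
Your proof is correct and follows essentially the same approach as the paper: the paper's argument is woven into the discussion immediately preceding the theorem statement, using \L{}ojasiewicz for convergence, \Cref{thm:torus critical points} for balancedness, \eqref{eq:torus gradient in orbit} for the orbit-invariance of eigenvalues and principal minors, and \eqref{eq:grad benergy entries} for zero-entry preservation and sign preservation of real entries. You supply a bit more detail than the paper in two places---the explicit verification that diagonal conjugation fixes principal minors, and the intermediate-value argument for non-negativity---but these merely flesh out steps the paper leaves implicit.
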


When we take $A_0$ to be the entrywise square root of the adjacency matrix of some weighted, directed graph $\graphG_0 = (\mathcal{V}_0, \mathcal{E}_0,w_0)$, then we can sensibly interpret $A_\infty = \displaystyle\lim_{t \to \infty} \bflow(A_0,t)$ as the entrywise square root of the adjacency matrix of some balanced, weighted, directed graph $\graphG_\infty = (\mathcal{V}_\infty, \mathcal{E}_\infty,w_\infty)$ with $\mathcal{V}_\infty = \mathcal{V}_0$ and $\mathcal{E}_\infty \subseteq \mathcal{E}_0$. In other words, gradient descent of $\benergy$ balances $\graphG_0$ without introducing any new edges.

\begin{remark}
    An important consideration in the applied literature on graph balancing is that algorithms are \emph{local}, in the sense that iterative updates are only performed based on node-level information~\cite{rikos2014distributed,hooi1970class,hadjicostis2012distributed}. This is due both to practical constraints on data acquisition, as well as the need for parallelizability in computation. Observe from the structure of the gradient of the unbalanced energy that the gradient descent approach to graph balancing is not local in the sense described above, but is \emph{semi-local} in the sense that updates only depend on edge-level information. While this paper is concerned with theory and makes no claims to efficiency or practicality of the algorithm, the useful properties of the gradient flow of $\mathrm{B}$ suggest that it may be interesting to explore its viability in real world applications.
\end{remark}

In the case of gradient descent of $\energy$, we saw that all nilpotent matrices flowed to the zero matrix. We see the same phenomenon here: if $\graphG_0$ is a weighted, directed, acyclic graph (DAG), then its adjacency matrix is nilpotent, as is the entrywise square root $A_0$. The gradient flow $\bflow(A_0,t)$ will limit to the zero matrix, which makes sense: the only way to balance a weighted DAG is by driving all the weights to zero.

\subsection{Preserving Weights} 
\label{sec:preserving graph weights}

Weighted DAGs provide an extreme example of the general phenomenon that gradient descent of $\benergy$ decreases the Frobenius norm. In graph terms, if $A_0$ is the entrywise square root of the adjacency matrix of a weighted, directed graph $\graphG_0$, then the squared Frobenius norm 
\[
	\|A_0\|^2 = \sum_{i,j} |a_{ij}|^2 = \sum_{i,j} a_{ij}^2
\]
is precisely the sum of the weights in $\graphG_0$. If the weights correspond to, e.g., mass traversing between nodes in a network, then it may not make sense to balance the flows in the network by reducing the total mass in the system. 

In order to preserve the sum of weights on $\graphG_0$, we consider $\rbenergy: \mathcal{U}_d \to \R$, the restriction of $\benergy$ to $\mathcal{U}_d$, and its gradient descent $\rbflow: \mathcal{U}_d \times [0, \infty) \to \mathcal{U}_d$ given by
\[
	\rbflow(A_0,0) = A_0 \qquad \frac{d}{dt} \rbflow(A_0,t) = -\grad \rbenergy(\rbflow(A_0,t)).
\]

\begin{thm}\label{thm:torus constrained gradient flow}
    For any non-nilpotent $A_0 \in \mathcal{U}_d$, the matrix $A_\infty \coloneq \displaystyle\lim_{t \to \infty}\rbflow(A_0,t)$ exists, is balanced, has Frobenius norm 1, and has zero entries wherever $A_0$ does. If $A_0$ is real, so is $A_\infty$, and if $A_0$ has all non-negative entries, then so does $A_\infty$.
\end{thm}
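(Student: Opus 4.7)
The plan is to mirror the three-step argument used for \Cref{thm:constrained gradient flow}: compute the intrinsic gradient of $\rbenergy$ on $\mathcal{U}_d$, show that non-nilpotency is preserved along the flow, and show that every non-minimizing critical point of $\rbenergy$ is nilpotent. Existence of $A_\infty$ itself follows from Łojasiewicz as in \cite{lojasiewicz_sur_1984,bodmann_frame_2015}, since $\rbenergy$ is a polynomial on the real-analytic manifold $\mathcal{U}_d$; combined with the two steps above, any non-nilpotent initial condition must flow to a critical point that is not nilpotent and therefore must be a global minimum, i.e., balanced.

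For the intrinsic gradient, project $\nabla \benergy(A) = -4[A,\diag([A,A^\ast])]$ onto $T_A \mathcal{U}_d = \mathrm{span}(\{A\})^\bot$. A short trace computation, using the cyclic property together with the fact that $D := \diag([A,A^\ast])$ is a real diagonal matrix whose diagonal coincides with that of $[A,A^\ast]$, gives $\tr(D[A,A^\ast]) = \|D\|^2 = \rbenergy(A)$, whence $\langle \nabla \benergy(A), A\rangle = 4\rbenergy(A)$ and
\[
\grad \rbenergy(A) = -4\bigl([A,\diag([A,A^\ast])] + \rbenergy(A)\, A\bigr),
\]
in complete analogy with \Cref{prop:intrinsic gradient}. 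To adapt \Cref{lem:gradient dot product}, I would use that $\nabla \benergy(A)$ is tangent to the $\DSL_d(\C)$-orbit of $A$ by \eqref{eq:torus gradient in orbit}, hence tangent to the level set of $s$ (since conjugation preserves the spectrum), so $\langle \nabla \benergy(A), \nabla s(A)\rangle = 0$. The directional derivative identity $\langle \nabla s(A),A\rangle = 2 s(A)$ is unchanged, and one obtains
\[
\langle -\grad \rbenergy(A), \grad s(A)\rangle = 8 s(A)\, \rbenergy(A) \geq 0,
\]
so $s$ is non-decreasing along $\rbflow$, proving preservation of non-nilpotency (including in the limit). Setting $\grad \rbenergy(A) = 0$ at a non-balanced $A$ gives $A = [A,B]$ with $B = -\frac{1}{\rbenergy(A)} \diag([A,A^\ast])$, and Jacobson's Lemma (\Cref{lem:jacobson}) forces $A$ to be nilpotent, completing the argument that $A_\infty$ is balanced.

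The four remaining claims (norm one, zero entries preserved, realness preserved, non-negativity preserved) all follow from the explicit form of the negative gradient. The norm-one claim is by construction. For the rest, the key observation, already used in the proof of \Cref{thm:benergy flow}, is that each entry of $[A,\diag([A,A^\ast])]$ is proportional to $a_{ij}$ via the scalar \eqref{eq:grad benergy entries}; the additional term $-4\rbenergy(A) A$ in $\grad \rbenergy(A)$ is also entrywise proportional to $a_{ij}$, so the $(i,j)$-entry of $-\grad \rbenergy(A)$ equals
\[
\Bigl[4\bigl((\|A_i\|^2 - \|A^i\|^2) - (\|A_j\|^2 - \|A^j\|^2)\bigr) + 4\rbenergy(A)\Bigr] a_{ij}.
\]
Consequently each entry of $\rbflow(A_0,t)$ satisfies a scalar linear ODE $\dot{a}_{ij} = c_{ij}(t)\, a_{ij}$, so $a_{ij}(t) = a_{ij}(0)\exp\!\bigl(\int_0^t c_{ij}(s)\,ds\bigr)$; this preserves vanishing, realness, and sign, and passes to the limit.

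The main technical obstacle is the inner-product computation $\langle \nabla \benergy(A), A\rangle = 4\rbenergy(A)$ that underlies both the formula for $\grad \rbenergy$ and the inequality in the adapted version of \Cref{lem:gradient dot product}; once that identity is in hand, every remaining step is a direct translation of the corresponding step in \Cref{sec:unit-norm}, with the role of $\SL_d(\C)$ played by $\DSL_d(\C)$.
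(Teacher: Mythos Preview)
Your proposal is correct and follows essentially the same route as the paper: compute the intrinsic gradient via the identity $\langle \nabla \benergy(A), A\rangle = 4\rbenergy(A)$ (the paper's \Cref{prop:torus intrinsic gradient}), adapt \Cref{lem:gradient dot product} using that $\nabla\benergy(A)$ is tangent to the $\DSL_d(\C)$-orbit (the paper's \Cref{lem:torus gradient dot product} and \Cref{prop:torus non-nilpotency}), and invoke Jacobson's Lemma for the non-minimizing critical points (the paper's \Cref{prop:torus critical points}). Your explicit scalar-ODE argument for the entrywise claims is a slightly more detailed version of the paper's one-line observation that each entry of $\grad\rbenergy(A)$ is a scalar multiple of the corresponding entry of $A$.
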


In graph terms, if $A_0$ is the entrywise square root of an adjacency matrix for $\graphG_0$ with total weight 1, then $A_\infty$ is the entrywise square root of the adjacency matrix for a balanced graph $\graphG_\infty$ with total weight 1 whose vertices are the same as the vertices of $\graphG_0$ and whose edges are a subset of the edges of $\graphG_0$. That is, gradient descent of $\rbenergy$ balances $\graphG_0$ without introducing any new edges and without losing any overall weight.

The strategy for proving \Cref{thm:torus constrained gradient flow} is the same as for \Cref{thm:constrained gradient flow}. The existence of a unique limit point $A_\infty$ follows from the fact that $\rbenergy$ is a polynomial function on $\mathcal{U}_d$, and hence has a Łojasiewicz exponent. The bulk of the argument is in showing that the gradient flow preserves non-nilpotency and that the non-minizing critical points are nilpotent. The rest of the theorem will follow from the structure of $\grad \rbenergy$ and the fact that the real submanifold of $\mathcal{U}_d$ is invariant under gradient flow.

First, we compute the intrinsic gradient of $\rbenergy$, which follows the same pattern as $\grad \renergy$: 

\begin{prop}\label{prop:torus intrinsic gradient}
	The intrinsic gradient of $\rbenergy$ on $\mathcal{U}_d$ is
	\[
		\grad \rbenergy(A) = -4([A,\diag([A,A^\ast])]+\rbenergy(A)A).
	\]
\end{prop}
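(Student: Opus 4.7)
The plan is to mirror the proof of \Cref{prop:intrinsic gradient}, where the analogous formula for $\grad \renergy$ was derived. Since $\mathcal{U}_d$ is the unit Frobenius-norm sphere in $\Cd$, the intrinsic gradient at $A$ is the orthogonal projection of the ambient gradient $\nabla \benergy(A)$ onto the tangent space $T_A \mathcal{U}_d = \mathrm{span}(\{A\})^\bot$, so
\[
\grad \rbenergy(A) = \nabla \benergy(A) - \langle \nabla \benergy(A), A \rangle A.
\]
The ambient gradient is already known: by \eqref{eq:grad benergy}, $\nabla \benergy(A) = -4[A,\diag([A,A^\ast])]$. What remains is to compute $\langle \nabla \benergy(A), A \rangle$ and show it equals $4\rbenergy(A)$; once that is in hand, the desired formula falls out immediately.

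For the inner product computation, I would set $D \coloneq \diag([A,A^\ast])$ and exploit the fact that $[A,A^\ast]$ is Hermitian, so its diagonal is real. This gives $D^\ast = D$, and hence $[A,D]^\ast = DA^\ast - A^\ast D$. Using the cyclic invariance of trace,
\[
\mathrm{Re}\,\tr([A,D]^\ast A) = \mathrm{Re}\,\tr(D A^\ast A - A^\ast D A) = \mathrm{Re}\,\tr(D A^\ast A - D A A^\ast) = -\mathrm{Re}\,\tr\big(D\,[A,A^\ast]\big).
\]
Because $D$ is diagonal, the trace $\tr(D\,[A,A^\ast])$ picks out only the diagonal entries of $[A,A^\ast]$, giving $\sum_i D_{ii}^2 = \|D\|^2 = \rbenergy(A)$. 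Multiplying by the $-4$ from $\nabla \benergy$ yields $\langle \nabla \benergy(A), A \rangle = 4\rbenergy(A)$, exactly parallel to \eqref{eq:riemannian gradient} in the $\renergy$ setting. Substituting back gives $\grad \rbenergy(A) = -4[A,\diag([A,A^\ast])] - 4\rbenergy(A) A$, as claimed.

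I do not anticipate a serious obstacle: the argument is a direct structural analogue of the $\renergy$ calculation, and the only ingredient specific to the torus setting is the self-adjointness of $D = \diag([A,A^\ast])$, which is what makes the trace manipulation collapse cleanly to $\|D\|^2$. The same realness observation will be useful later when verifying that $\grad \rbenergy(A)$ is tangent to the $\DSL_d(\C)$-orbit of $A$, as hinted at by the analogue of \eqref{eq:torus gradient in orbit}.
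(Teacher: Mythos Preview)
Your proposal is correct and follows essentially the same approach as the paper: project $\nabla\benergy(A)=-4[A,\diag([A,A^\ast])]$ onto $T_A\mathcal{U}_d$, then compute $\langle\nabla\benergy(A),A\rangle$ via cyclic invariance of trace and the fact that $D=\diag([A,A^\ast])$ is real and self-adjoint, reducing $\tr(D[A,A^\ast])$ to $\|D\|^2=\rbenergy(A)$. The only cosmetic difference is that the paper inserts the intermediate equality $\tr(D[A,A^\ast])=\tr(D\cdot\diag([A,A^\ast]))$ before identifying it with $\|D\|^2$, whereas you pass directly to $\sum_i D_{ii}^2$.
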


\begin{proof}
	We know that
	\[
		\grad \rbenergy(A) = \nabla \benergy(A) - \langle \nabla \benergy(A),A\rangle A,
	\]
	so the key is to use \eqref{eq:grad benergy} and the fact that the diagonal of $[A,A^\ast]$ is real to compute
	\begin{multline*}
		\langle \nabla \benergy(A),A\rangle = -4\mathrm{Re}\tr([A,\diag([A,A^\ast])]^\ast A) = -4\mathrm{Re}\tr(\diag([A,A^\ast])A^\ast A - A^\ast \diag([A,A^\ast])A) \\
		= 4\mathrm{Re}\tr(\diag([A,A^\ast])[A,A^\ast]) = 4\mathrm{Re}\tr(\diag([A,A^\ast])\diag([A,A^\ast])) = 4\|\diag([A,A^\ast])\|^2 = 4\rbenergy(A)
	\end{multline*}
	using the linearity and cyclic invariance of trace.
\end{proof}

Each entry of $\grad \rbenergy(A)$ is a scalar multiple of the corresponding entry of $A$, so the fact that the negative gradient flow $\rbflow$ preserves zero entries and cannot change the sign of real entries follows immediately.

Next, we prove an analog of \Cref{lem:gradient dot product}. Recall that $s(A) = \sum |\lambda_i|^2$ is the sum of the squares of the absolute values of the eigenvalues of $A$.

\begin{lem}\label{lem:torus gradient dot product}
	For any $A \in \mathcal{U}_d$,
	\[
		\langle -\grad \rbenergy(A),\grad s(A)\rangle = 8s(A)\rbenergy(A).
	\]
\end{lem}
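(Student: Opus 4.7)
The plan is to mimic the proof of Lemma~\ref{lem:gradient dot product} essentially verbatim, replacing $\nabla\energy$ and $\grad\renergy$ by $\nabla\benergy$ and $\grad\rbenergy$ throughout. The key structural facts that made that argument work are all still available in this setting: (i) the intrinsic gradient on $\mathcal{U}_d$ has the form $\grad\rbenergy(A) = \nabla\benergy(A) - 4\rbenergy(A)\,A$ (from \Cref{prop:torus intrinsic gradient}, together with the inner-product computation $\langle \nabla\benergy(A), A\rangle = 4\rbenergy(A)$ carried out there), and (ii) $\nabla\benergy(A)$ lies in the tangent space to a conjugation orbit through $A$, this time the orbit of $\DSL_d(\C)$ rather than all of $\SL_d(\C)$, as displayed in \eqref{eq:torus gradient in orbit}.

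First I would expand using (i) and the fact that $\grad s(A) \in T_A\mathcal{U}_d = \mathrm{span}(\{A\})^\perp$, which immediately kills the $\rbenergy(A)\,A$ term:
\[
\langle -\grad \rbenergy(A), \grad s(A) \rangle = \langle -\nabla \benergy(A), \grad s(A) \rangle.
\]
Next I would write $\grad s(A) = \nabla s(A) - \langle \nabla s(A), A\rangle A$ and distribute, using $\langle \nabla\benergy(A), A\rangle = 4\rbenergy(A)$ to rewrite the inner product with $A$, yielding
\[
\langle -\nabla\benergy(A), \grad s(A)\rangle = -\langle \nabla\benergy(A), \nabla s(A)\rangle + 4\langle \nabla s(A), A\rangle\,\rbenergy(A).
\]

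The crucial step, and the only place where the diagonal torus vs.\ full unitary distinction matters, is showing that $\langle \nabla\benergy(A), \nabla s(A)\rangle = 0$. By \eqref{eq:torus gradient in orbit}, $\nabla\benergy(A)$ is tangent to the orbit of $A$ under the conjugation action of $\DSL_d(\C)$; since conjugation by any invertible matrix (whether in $\SL_d(\C)$ or the subgroup $\DSL_d(\C)$) preserves eigenvalues, $s$ is constant along this orbit, so $\nabla\benergy(A)$ is tangent to the level set of $s$ through $A$ and hence orthogonal to $\nabla s(A)$. Finally, since $s$ is homogeneous of degree $2$, the directional derivative computation from the proof of \Cref{lem:gradient dot product} gives $\langle \nabla s(A), A\rangle = Ds(A)(A) = 2s(A)$, and assembling the pieces yields $\langle -\grad \rbenergy(A), \grad s(A)\rangle = 8 s(A)\rbenergy(A)$.

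I do not expect any real obstacle: the argument is a direct transcription of the previous lemma's proof, with the only conceptually new observation being that the subgroup $\DSL_d(\C)$, being a subgroup of $\SL_d(\C)$, still acts by similarity transformations and therefore preserves the spectrum. All of the scalar constants (the factor of $4$ coming from $\nabla \benergy$ and the factor of $2$ from Euler's identity for degree-$2$ homogeneity) match up exactly as before.
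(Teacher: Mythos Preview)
Your proposal is correct and is exactly the approach the paper takes: its proof simply states that one repeats the argument of \Cref{lem:gradient dot product} with $\benergy$, $\rbenergy$, and \eqref{eq:torus gradient in orbit} substituted for $\energy$, $\renergy$, and \eqref{eq:gradient in orbit}. Your write-up spells out these substitutions in full and correctly identifies the only new observation needed, namely that the $\DSL_d(\C)$ conjugation orbit still lies inside a level set of $s$.
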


\begin{proof}
	The proof exactly parallels the proof of \Cref{lem:gradient dot product} by substituting $\benergy$, $\rbenergy$, and \eqref{eq:torus gradient in orbit} for $\energy$, $\renergy$, and \eqref{eq:gradient in orbit}, respectively. 
\end{proof}

Since $\langle -\grad \rbenergy(A), \grad s(A) \rangle = 8s(A)\rbenergy(A) \geq 0$, $s(A)$ must be non-decreasing along the negative gradient flow lines of $\rbenergy$, so we have proved:

\begin{prop}\label{prop:torus non-nilpotency}
	If $A_0 \in \mathcal{U}_d$ is non-nilpotent, then so is $A_t \coloneq \rbflow(A_0,t)$ and so is $A_\infty \coloneq \displaystyle\lim_{t \to \infty}\rbflow(A_0,t)$.
\end{prop}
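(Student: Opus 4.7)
The plan is to follow the strategy of Proposition \ref{prop:non-nilpotency} essentially verbatim, since the substantive work has already been packaged into Lemma \ref{lem:torus gradient dot product}. The first step is to recognize that lemma as a sign statement for a derivative: by the chain rule and the defining ODE of $\rbflow$,
\[
\frac{d}{dt} s(\rbflow(A_0,t)) = \langle \grad s(A_t), -\grad \rbenergy(A_t) \rangle = 8\, s(A_t)\, \rbenergy(A_t) \geq 0.
\]
Thus $t \mapsto s(A_t)$ is non-decreasing along every negative gradient flow line of $\rbenergy$.

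With that monotonicity in hand, the proposition falls out in two short steps. First, since the nilpotent matrices are exactly the zero locus of $s$, the hypothesis that $A_0$ is non-nilpotent is equivalent to $s(A_0) > 0$; monotonicity then gives $s(A_t) \geq s(A_0) > 0$ for every $t \in [0,\infty)$, so $A_t$ is non-nilpotent at every finite time. Second, invoking the existence of $A_\infty$ (guaranteed by the Łojasiewicz exponent argument summarized just after Theorem \ref{thm:torus constrained gradient flow}) together with the continuity of $s$ on $\mathcal{U}_d$, we pass to the limit to obtain $s(A_\infty) \geq s(A_0) > 0$, so $A_\infty$ is non-nilpotent as well.

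I do not anticipate any genuine obstacle in this proof: the technical content sits in Lemma \ref{lem:torus gradient dot product}, which in turn relies on the explicit gradient formula from Proposition \ref{prop:torus intrinsic gradient} and the tangency of $\nabla \benergy(A)$ to $\DSL_d(\C)$-orbits---hence to level sets of $s$. The present statement is simply the qualitative nilpotency conclusion one packages from that monotonicity, and the only aspect of the original Proposition \ref{prop:non-nilpotency} that needed any alteration was the input inequality, which Lemma \ref{lem:torus gradient dot product} supplies for us.
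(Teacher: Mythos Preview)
Your proof is correct and follows essentially the same approach as the paper: the paper's argument is just the one-line observation that Lemma~\ref{lem:torus gradient dot product} gives $\langle -\grad \rbenergy(A), \grad s(A)\rangle \geq 0$, so $s$ is non-decreasing along the flow and the conclusion follows exactly as in Proposition~\ref{prop:non-nilpotency}. You have simply spelled out the chain-rule and limit-passage steps a bit more explicitly.
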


We know the balanced matrices are exactly the global minima of $\rbenergy$. \Cref{prop:torus intrinsic gradient} implies that $A$ is a critical point of $\rbenergy$ if and only if
\[
	0 = [A,\diag([A,A^\ast])] + \rbenergy(A)A.
\]
When $A$ is a non-minimizing critical point, $\rbenergy(A) \neq 0$ and the same Jacobson's Lemma argument as in \Cref{prop:critical points} shows that $A$ is nilpotent, proving:

\begin{prop}\label{prop:torus critical points}
	All non-minimizing critical points of $\rbenergy$ are nilpotent.
\end{prop}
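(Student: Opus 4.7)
The plan is to mirror the proof of \Cref{prop:critical points} almost verbatim, replacing the role of $[A,A^\ast]$ there with $\diag([A,A^\ast])$ here. The essential observation is that \Cref{prop:torus intrinsic gradient} tells us $A \in \mathcal{U}_d$ is a critical point of $\rbenergy$ precisely when
\[
    [A,\diag([A,A^\ast])] + \rbenergy(A)\, A = 0.
\]

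First I would note that if $A$ is a non-minimizing critical point, then $\rbenergy(A) \neq 0$ by definition, so we can rearrange the critical point equation to write
\[
    A = -\frac{1}{\rbenergy(A)}\,[A,\diag([A,A^\ast])] = \left[A,\ -\tfrac{1}{\rbenergy(A)}\diag([A,A^\ast])\right].
\]
Setting $B \coloneq -\frac{1}{\rbenergy(A)}\diag([A,A^\ast])$, this exhibits $A$ in the form $A = [A,B]$.

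The final step is to observe that $A$ trivially commutes with $[A,B] = A$, so Jacobson's Lemma (\Cref{lem:jacobson}) applies and yields that $[A,B]$ is nilpotent. Since $A = [A,B]$, we conclude that $A$ is nilpotent, as desired.

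There is no real obstacle here: the computation of $\grad \rbenergy$ has already been carried out in \Cref{prop:torus intrinsic gradient}, and the trick of rewriting the critical point equation as $A = [A,B]$ to invoke Jacobson is precisely the move used in \Cref{prop:critical points}. The only thing to emphasize is that, just as in the unrestricted case, the freedom to divide by $\rbenergy(A)$ is exactly what the non-minimizing hypothesis provides.
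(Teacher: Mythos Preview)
Your proof is correct and follows exactly the approach the paper takes: the paper simply says that when $A$ is a non-minimizing critical point, $\rbenergy(A)\neq 0$ and ``the same Jacobson's Lemma argument as in \Cref{prop:critical points}'' gives nilpotency, which is precisely the rearrangement $A=[A,B]$ with $B=-\frac{1}{\rbenergy(A)}\diag([A,A^\ast])$ that you wrote out.
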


This completes the proof of \Cref{thm:torus constrained gradient flow}.

\Cref{fig:balancing} shows an application of this approach to balancing graphs, and \Cref{fig:balanced graph} shows a much larger example. In both cases, up to an overall normalization to ensure $\|A_0\| = 1$, the non-zero entries in the starting matrix $A_0$ were populated by the absolute values of standard Gaussians.

\begin{figure}[t]
	\centering
        \includegraphics[width=6in]{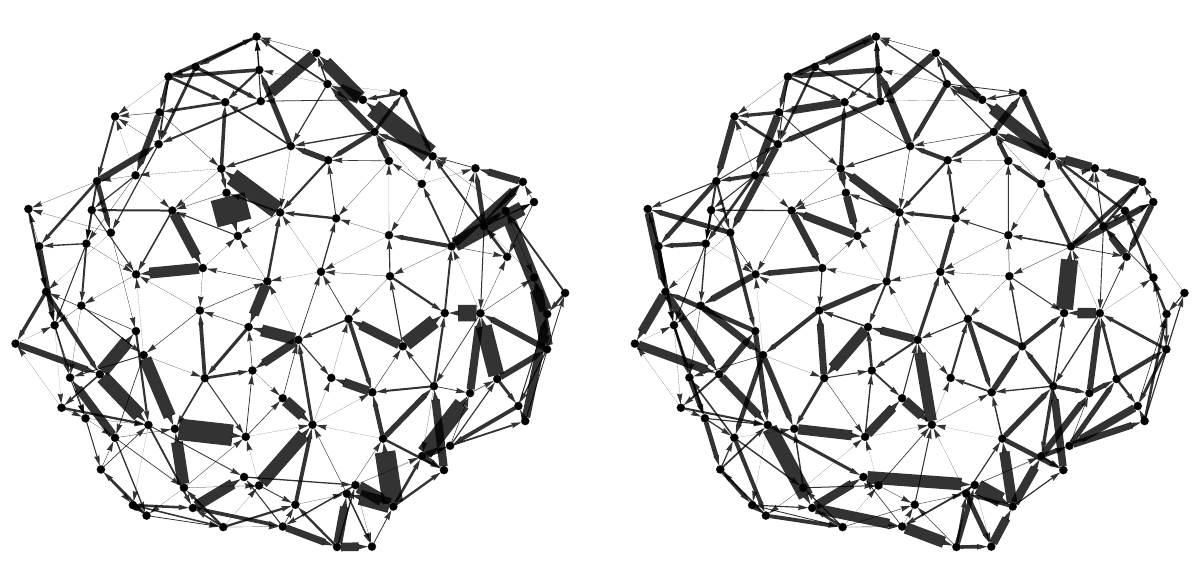}
	\caption{Balancing a larger graph by the flow $\rbflow$, with $A_0$ on the left and $A_\infty = \displaystyle \lim_{t \to \infty} \rbflow(A_0,t)$ on the right. The thickness of each edge is proportional to its weight. The underlying graph is a random planar graph with 100 vertices and 284 edges, constructed as the 1-skeleton of the Delaunay triangulation of 100 random points in the square; to make the visualization more comprehensible, the graph that is shown is a spring embedding, so the vertices are not at the locations of the original random points in the square.}
	\label{fig:balanced graph}
\end{figure}

\subsection{Topology of Unit Norm Balanced Graphs}

Let $\mathcal{UB}_d$ denote the space of balanced $d \times d$ matrices of unit Frobenius norm, and let $\mathcal{UB}_d^\R$ denote the subspace of balanced matrices with real entries. The topology of these spaces is tied to the topology of the relevant spaces of normal matrices, as we record in the following theorem.

\begin{thm}\label{thm:topology_balanced_graphs}
    The spaces $\unitnormal$ and $\mathcal{UB}_d$ are homotopy equivalent. Similarly, the spaces $\mathcal{UN}_d^\R$ and $\mathcal{UB}_d^\R$ are homotopy equivalent. 
\end{thm}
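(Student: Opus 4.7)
The plan is to prove both homotopy equivalences by showing that $\unitnormal$ and $\mathcal{UB}_d$ (respectively their real analogs) are both strong deformation retracts of a common space, namely $\mathcal{M}_d$, the space of non-nilpotent complex $d \times d$ matrices. \Cref{cor:unit_normal_deformation_retract} already supplies a strong deformation retract $\mathcal{M}_d \to \unitnormal$, so the main task is to produce an analogous retract $\mathcal{M}_d \to \mathcal{UB}_d$ using the flow $\bflow$ of the unbalanced energy in place of $\flow$.

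First, I would transport the proof of \Cref{cor:unit_normal_deformation_retract} to the balanced setting. Since $\benergy$ is the squared norm of the torus momentum map $\mu_\Delta$ and its only critical points are global minima (\Cref{thm:torus critical points}), a Duistermaat--Lerman-style argument produces a continuous gradient flow map $\C^{d \times d} \times [0,\infty] \to \C^{d \times d}$ that realizes a strong deformation retract of $\C^{d \times d}$ onto the full space $\mathcal{B}_d$ of balanced matrices. From \eqref{eq:torus gradient in orbit}, $\bflow$ moves each matrix within its $\DSL_d(\C)$-orbit, hence preserves the spectrum and in particular non-nilpotency. Restricting the retract to $\mathcal{M}_d$ therefore gives a strong deformation retract $\mathcal{M}_d \to \mathcal{B}_d \cap \mathcal{M}_d$.

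Next, I would use that $\mathcal{B}_d$ is closed under $\C^\times$-scaling, which is immediate from $\diag([cA,(cA)^\ast]) = |c|^2\diag([A,A^\ast])$. Thus $\mathcal{B}_d \setminus \{0\}$ is a cone (minus apex) over $\mathcal{UB}_d$, and since nilpotency is scale-invariant, radial retraction to Frobenius norm one yields a strong deformation retract $\mathcal{B}_d \cap \mathcal{M}_d \to \mathcal{UB}_d \setminus \mathcal{P}_d$. Concatenating these retractions with \Cref{cor:unit_normal_deformation_retract} then produces the equivalence $\unitnormal \simeq \mathcal{UB}_d \setminus \mathcal{P}_d$.

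The main obstacle is the last step: showing that the inclusion $\mathcal{UB}_d \setminus \mathcal{P}_d \hookrightarrow \mathcal{UB}_d$ is a homotopy equivalence. Unlike $\unitnormal$, which is disjoint from $\mathcal{P}_d$ since any normal nilpotent matrix is zero, the space $\mathcal{UB}_d$ meets $\mathcal{P}_d$ nontrivially; for instance, when $d = 2$ the normalized matrix $\frac{1}{2}\begin{bmatrix} 1 & 1 \\ -1 & -1 \end{bmatrix}$ is a unit-norm balanced nilpotent. The approach I would pursue is a transversality argument in the spirit of \Cref{lem:fundamental_group_complement}, decomposing $\mathcal{UB}_d \cap \mathcal{P}_d$ as a union of smooth submanifolds of sufficiently large codimension in $\mathcal{UB}_d$; this demands a careful local analysis of $\mathcal{UB}_d$---viewed as the level set $\mu_\Delta^{-1}(0) \cap \mathcal{U}_d$---near its nilpotent points, drawing on nilpotent cone theory for the torus action in the style of \cite{jantzen2004nilpotent,heinzner2008stratifications}. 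The real case proceeds identically, substituting $\mathcal{M}_d^\R$ for $\mathcal{M}_d$, using that both flows preserve realness, and invoking \Cref{lem:nilpotent_cone_structure_real} for the analogous real codimension analysis.
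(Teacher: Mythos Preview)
The paper takes a shorter route than you do: rather than passing through $\mathcal{M}_d$ via the unconstrained flows and then radially projecting, it works directly on the unit sphere $\mathcal{U}_d$ and invokes \Cref{thm:constrained gradient flow} and \Cref{thm:torus constrained gradient flow} to assert that both $\unitnormal$ and $\mathcal{UB}_d$ are deformation retracts of the \emph{non-nilpotent} unit-norm matrices, via $\rflow$ and $\rbflow$ respectively. Two deformation retracts of a common space being homotopy equivalent, that is the whole argument.

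You have, however, spotted a genuine subtlety that the paper's proof elides. Your example $\tfrac{1}{2}\begin{bmatrix}1&1\\-1&-1\end{bmatrix}$ is a unit-norm balanced nilpotent matrix, so $\mathcal{UB}_d$ is not contained in the non-nilpotent locus of $\mathcal{U}_d$ and hence cannot literally be a deformation retract of it. What \Cref{thm:torus constrained gradient flow} together with \Cref{prop:torus non-nilpotency} actually yields is a retraction onto $\mathcal{UB}_d\setminus\mathcal{P}_d$; both the paper's argument and yours therefore establish $\unitnormal\simeq\mathcal{UB}_d\setminus\mathcal{P}_d$, and the passage to $\mathcal{UB}_d$ itself still needs justification.

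Your proposed remedy, though, does not close that gap. An argument in the style of \Cref{lem:fundamental_group_complement} only yields $\pi_k(\mathcal{UB}_d\setminus\mathcal{P}_d)\cong\pi_k(\mathcal{UB}_d)$ for $k$ at least two below the codimension of the balanced nilpotent locus; it does not give a homotopy equivalence. Already for $d=2$ one checks that $\mathcal{UB}_2\cap\mathcal{P}_2$ is a $2$-torus inside the $6$-dimensional space $\mathcal{UB}_2$, so the codimension is only $4$---far from enough to control all homotopy groups. To finish you would need a different mechanism, for instance an explicit deformation of $\mathcal{UB}_d$ off its nilpotent part, or else a reading of the statement in which $\mathcal{UB}_d$ is tacitly replaced by $\mathcal{UB}_d\setminus\mathcal{P}_d$.
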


\begin{proof}
    By \Cref{thm:constrained gradient flow} and \Cref{thm:torus constrained gradient flow}, respectively, $\unitnormal$ and $\mathcal{UB}_d$ are both deformation retracts of the space of non-nilpotent unit norm matrices. The same theorems give the result in the real case.
\end{proof}

\subsection*{Acknowledgments}

We are grateful to Malbor Asllani for catalyzing this line of inquiry, to Chris Peterson for enlightening conversations, and to the anonymous referees for their careful reading and thoughtful comments which have made this a better paper. We would like to thank the Isaac Newton Institute for Mathematical Sciences, Cambridge, for support and hospitality during the program \textit{New equivariant methods in algebraic and differential geometry}, where some of the work on this paper was undertaken. This work was supported by EPSRC grant EP/R014604/1 and by grants from the National Science Foundation (DMS--2107808 and DMS--2324962, Tom Needham; DMS--2107700, Clayton Shonkwiler). 
 
\bibliography{bibliography}

\begin{thebibliography}{10}

\bibitem{absil_stable_2006}
Pierre-Antoine Absil and Krzysztof Kurdyka.
\newblock On the stable equilibrium points of gradient systems.
\newblock {\em Systems \& Control Letters}, 55(7):573--577, 2006.

\bibitem{asllani2018topological}
Malbor Asllani and Timoteo Carletti.
\newblock Topological resilience in non-normal networked systems.
\newblock {\em Physical Review E}, 97(4):042302, 2018.

\bibitem{asllani2018structure}
Malbor Asllani, Renaud Lambiotte, and Timoteo Carletti.
\newblock Structure and dynamical behavior of non-normal networks.
\newblock {\em Science Advances}, 4(12):eaau9403, 2018.

\bibitem{audin_torus_2004}
Michèle Audin.
\newblock {\em Torus {Actions} on {Symplectic} {Manifolds}}, volume~93 of {\em
  Progress in {Mathematics}}.
\newblock Birkhäuser Verlag, Basel, second revised edition, 2004.

\bibitem{bauer1960norms}
Friedrich~L. Bauer and Charles~T. Fike.
\newblock Norms and exclusion theorems.
\newblock {\em Numerische Mathematik}, 2(1):137--141, 1960.

\bibitem{ben-israel_what_1986}
Adi Ben-Israel and Bertram Mond.
\newblock What is invexity?
\newblock {\em The Journal of the Australian Mathematical Society. Series B.
  Applied Mathematics}, 28(1):1--9, 1986.

\bibitem{bodmann_frame_2015}
Bernhard~G. Bodmann and John Haas.
\newblock Frame potentials and the geometry of frames.
\newblock {\em Journal of Fourier Analysis and Applications}, 21(6):1344--1383,
  2015.

\bibitem{Bohm_Lafuente_2020}
Christoph Böhm and Ramiro~A. Lafuente.
\newblock Real geometric invariant theory.
\newblock In Owen Dearricott, Wilderich Tuschmann, Yuri Nikolayevsky, Thomas
  Leistner, and Diarmuid Crowley, editors, {\em Differential {Geometry} in the
  {Large}}, number 463 in London {Mathematical} {Society} {Lecture} {Note}
  {Series}, pages 11--49. Cambridge University Press, Cambridge, UK, 2021.

\bibitem{cahill_connectivity_2017}
Jameson Cahill, Dustin~G. Mixon, and Nate Strawn.
\newblock Connectivity and irreducibility of algebraic varieties of finite unit
  norm tight frames.
\newblock {\em SIAM Journal on Applied Algebra and Geometry}, 1(1):38--72,
  2017.

\bibitem{chu_least_1991}
Moody~T. Chu.
\newblock Least squares approximation by real normal matrices with specified
  spectrum.
\newblock {\em SIAM Journal on Matrix Analysis and Applications},
  12(1):115--127, 1991.

\bibitem{craven_duality_1981}
Bruce~D. Craven.
\newblock Duality for generalized convex fractional programs.
\newblock In Siegfried Schaible and William~T. Ziemba, editors, {\em
  Generalized {Concavity} in {Optimization} and {Economics}: {Proceedings} of
  the {NATO} {Advanced} {Study} {Institute} held at the {University} of
  {British} {Columbia}, {Vancouver}, {Canada}, {Aug}. 4-15, 1980}, pages
  473--489. Academic Press, New York, NY, USA, 1981.

\bibitem{craven_invex_1985}
Bruce~D. Craven and Bevil~M. Glover.
\newblock Invex functions and duality.
\newblock {\em Journal of the Australian Mathematical Society. Series A. Pure
  Mathematics and Statistics}, 39(1):1--20, 1985.

\bibitem{daniel1983choice}
Ronald~W. Daniel and Basil Kouvaritakis.
\newblock The choice and use of normal approximations to transfer-function
  matrices of multivariable control systems.
\newblock {\em International Journal of Control}, 37(5):1121--1133, 1983.

\bibitem{daniel1984analysis}
Ronald~W. Daniel and Basil Kouvaritakis.
\newblock Analysis and design of linear multivariable feedback systems in the
  presence of additive perturbations.
\newblock {\em International Journal of Control}, 39(3):551--580, 1984.

\bibitem{ebert2012lecture}
Johannes Ebert.
\newblock A lecture course on cobordism theory, 2012.
\newblock
  \url{https://ivv5hpp.uni-muenster.de/u/jeber_02/skripten/bordism-skript.pdf}.

\bibitem{elsner_normal_1998}
Ludwig Elsner and Khakim~D. Ikramov.
\newblock Normal matrices: an update.
\newblock {\em Linear Algebra and its Applications}, 285(1--3):291--303, 1998.

\bibitem{fisher_morse_2014}
Jonathan~M. Fisher.
\newblock Morse theory with the norm-square of a {hyperKähler} moment map.
\newblock {\em The Quarterly Journal of Mathematics}, 65(1):149--173, 2014.

\bibitem{friedland2002normal}
Shmuel Friedland.
\newblock Normal matrices and the completion problem.
\newblock {\em SIAM Journal on Matrix Analysis and Applications},
  23(3):896--902, 2002.

\bibitem{gabriel1987normal}
Richard Gabriel.
\newblock The normal {$\Delta$H-Matrices} with connection to some {J}acobi-like
  methods.
\newblock {\em Linear Algebra and its Applications}, 91:181--194, 1987.

\bibitem{godbillon1971elements}
Claude Godbillon.
\newblock {\em El{\'e}ments de topologie alg{\'e}brique}.
\newblock Hermann, Paris, 1971.

\bibitem{gohberg_invariant_2006}
Israel Gohberg, Peter Lancaster, and Leiba Rodman.
\newblock {\em Invariant {Subspaces} of {Matrices} with {Applications}}.
\newblock Number~51 in Classics in {Applied} {Mathematics}. Society for
  Industrial and Applied Mathematics, Philadelphia, PA, USA, 2006.

\bibitem{guglielmi2019computing}
Nicola Guglielmi and Carmela Scalone.
\newblock Computing the closest real normal matrix and normal completion.
\newblock {\em Advances in Computational Mathematics}, 45:2867--2891, 2019.

\bibitem{hadjicostis2012distributed}
Christoforos~N. Hadjicostis and Apostolos Rikos.
\newblock Distributed strategies for balancing a weighted digraph.
\newblock In {\em 2012 20th Mediterranean Conference on Control \& Automation
  (MED)}, pages 1141--1146. IEEE, 2012.

\bibitem{hanson_sufficiency_1981}
Morgan~A. Hanson.
\newblock On sufficiency of the {Kuhn}--{Tucker} conditions.
\newblock {\em Journal of Mathematical Analysis and Applications},
  80(2):545--550, 1981.

\bibitem{hatcher2002algebraic}
Allen Hatcher.
\newblock {\em Algebraic Topology}.
\newblock Cambridge University Press, 2002.

\bibitem{heinzner2008stratifications}
Peter Heinzner, Gerald~W. Schwarz, and Henrik St{\"o}tzel.
\newblock Stratifications with respect to actions of real reductive groups.
\newblock {\em Compositio Mathematica}, 144(1):163--185, 2008.

\bibitem{henrici_bounds_1962}
Peter Henrici.
\newblock Bounds for iterates, inverses, spectral variation and fields of
  values of non-normal matrices.
\newblock {\em Numerische Mathematik}, 4(1):24--40, 1962.

\bibitem{gover_matrix_1989}
Nicholas~J. Higham.
\newblock Matrix nearness problems and applications.
\newblock In Michael J.~C. Gover and Stephen Barnett, editors, {\em
  Applications of {Matrix} {Theory}: {Based} on the {Proceedings} of a
  {Conference} {Organized} by the {Institute} of {Mathematics} and its
  {Applications} on {Applications} of {Matrix} {Theory}, held in the
  {University} of {Bradford} in {July}, 1988}, number~22 in The {Institute} of
  {Mathematics} and its {Applications} {Conference} {Series}. {New} {Series},
  pages 1--27. The Clarendon Press, Oxford University Press, New York, NY, USA,
  1989.

\bibitem{hirsch2012differential}
Morris~W. Hirsch.
\newblock {\em Differential Topology}.
\newblock Number~33 in Graduate {Texts} in {Mathematics}. Springer, New York,
  NY, USA, 2012.

\bibitem{hooi1970class}
Loh Hooi-Tong.
\newblock On a class of directed graphs—with an application to traffic-flow
  problems.
\newblock {\em Operations Research}, 18(1):87--94, 1970.

\bibitem{jacobson_rational_1935}
Nathan Jacobson.
\newblock Rational methods in the theory of {Lie} algebras.
\newblock {\em Annals of Mathematics, Second Series}, 36(4):875--881, 1935.

\bibitem{jantzen2004nilpotent}
Jens~Carsten Jantzen.
\newblock Nilpotent orbits in representation theory.
\newblock In Jean-Philippe Anker and Bent Orsted, editors, {\em Lie Theory: Lie
  Algebras and Representations}, pages 1--211. Birkhäuser, Boston, MA, USA,
  2004.

\bibitem{kaplansky_jacobsons_1980}
Irving Kaplansky.
\newblock Jacobson's {Lemma} revisited.
\newblock {\em Journal of Algebra}, 62(2):473--476, 1980.

\bibitem{kirwan_cohomology_1984}
Frances Kirwan.
\newblock {\em Cohomology of {Quotients} in {Symplectic} and {Algebraic}
  {Geometry}}, volume~31 of {\em Mathematical {Notes}}.
\newblock Princeton University Press, Princeton, NJ, USA, 1984.

\bibitem{kostant_lie_1963}
Bertram Kostant.
\newblock Lie group representations on polynomial rings.
\newblock {\em American Journal of Mathematics}, 85(3):327, 1963.

\bibitem{lee2012smooth}
John~M. Lee.
\newblock {\em Introduction to Smooth Manifolds}.
\newblock Number 218 in Graduate Texts in Mathematics. Springer, New York, NY,
  USA, second edition, 2013.

\bibitem{lerman_gradient_2005}
Eugene Lerman.
\newblock Gradient flow of the norm squared of a moment map.
\newblock {\em L'Enseignement Mathématique}, 51:117--127, 2005.

\bibitem{marcus_survey_1964}
Marvin Marcus and Henryk Minc.
\newblock {\em A {Survey} of {Matrix} {Theory} and {Matrix} {Inequalities}}.
\newblock Series in {Advanced} {Mathematics}. Allyn and Bacon, Boston, MA, USA,
  1964.

\bibitem{mixon2024three}
Dustin~G. Mixon, Tom Needham, Clayton Shonkwiler, and Soledad Villar.
\newblock Three proofs of the {Benedetto}--{Fickus} theorem.
\newblock In {\em Sampling, Approximation, and Signal Analysis: Harmonic
  Analysis in the Spirit of J. Rowland Higgins}, pages 371--391. Springer,
  2024.

\bibitem{mumford_geometric_1994}
David Mumford, John Fogarty, and Frances Kirwan.
\newblock {\em Geometric {Invariant} {Theory}}, volume~34 of {\em Ergebnisse
  der {Mathematik} und ihrer {Grenzgebiete}}.
\newblock Springer--Verlag, Berlin, 1994.

\bibitem{muolo2019patterns}
Riccardo Muolo, Malbor Asllani, Duccio Fanelli, Philip~K Maini, and Timoteo
  Carletti.
\newblock Patterns of non-normality in networked systems.
\newblock {\em Journal of theoretical biology}, 480:81--91, 2019.

\bibitem{needham2021symplectic}
Tom Needham and Clayton Shonkwiler.
\newblock Symplectic geometry and connectivity of spaces of frames.
\newblock {\em Advances in Computational Mathematics}, 47(1):5, 2021.

\bibitem{needham2022toric}
Tom Needham and Clayton Shonkwiler.
\newblock Toric symplectic geometry and full spark frames.
\newblock {\em Applied and Computational Harmonic Analysis}, 61:254--287, 2022.

\bibitem{neeman_topology_1985}
Amnon Neeman.
\newblock The topology of quotient varieties.
\newblock {\em The Annals of Mathematics, Second Series}, 122(2):419--459,
  1985.

\bibitem{ness_stratification_1984}
Linda Ness.
\newblock A stratification of the null cone via the moment map.
\newblock {\em American Journal of Mathematics}, 106(6):1281--1329, 1984.

\bibitem{noschese2009structured}
Silvia Noschese and Lothar Reichel.
\newblock The structured distance to normality of banded {Toeplitz} matrices.
\newblock {\em BIT Numerical Mathematics}, 49:629--640, 2009.

\bibitem{rikos2014distributed}
Apostolos~I. Rikos, Themistoklis Charalambous, and Christoforos~N. Hadjicostis.
\newblock Distributed weight balancing over digraphs.
\newblock {\em IEEE Transactions on Control of Network Systems}, 1(2):190--201,
  2014.

\bibitem{ruhe1987closest}
Axel Ruhe.
\newblock Closest normal matrix finally found!
\newblock {\em BIT Numerical Mathematics}, 27:585--598, 1987.

\bibitem{normal-matrices-code}
Clayton Shonkwiler.
\newblock {Geometry of Normal Matrices}.
\newblock \url{https://github.com/shonkwiler/normal-matrices-computations}.

\bibitem{thomas_notes_2005}
Richard~P. Thomas.
\newblock Notes on {GIT} and symplectic reduction for bundles and varieties.
\newblock {\em Surveys in Differential Geometry}, 10(1):221--273, 2005.

\bibitem{whitney1957elementary}
Hassler Whitney.
\newblock Elementary structure of real algebraic varieties.
\newblock {\em Annals of Mathematics}, 66(3):545--556, 1957.

\bibitem{woodward_yang-mills_2006}
Christopher~T. Woodward.
\newblock The {Yang}--{Mills} heat flow on the moduli space of framed bundles
  on a surface.
\newblock Preprint, {\tt arXiv:math/0211231}, 2002. This is a preprint version
  of Christopher T. Woodward. The Yang--Mills heat flow on the moduli space of
  framed bundles on a surface. \emph{American Journal of Mathematics},
  128(2):311--369, 2006.

\bibitem{lojasiewicz_sur_1984}
Stanisław Łojasiewicz.
\newblock Sur les trajectoires du gradient d’une fonction analytique.
\newblock In {\em Seminari di {Geometria} 1982–1983}, pages 115--117.
  Dipartimento di Matematica, Instituto di Geometria, Università di Bologna,
  1984.

\end{thebibliography}

\end{document}